\newcolumntype{M}[1]{>{\centering\arraybackslash}m{#1}}
\newcolumntype{N}{@{}m{0pt}@{}}
\def\.{\hskip.06cm}
\newcommand{\vertiii}[1]{{\left\vert\kern-0.25ex\left\vert\kern-0.25ex\left\vert #1 
		\right\vert\kern-0.25ex\right\vert\kern-0.25ex\right\vert}}
\def\param{\boldsymbol{{\mathbf{\theta}}}}
\def\Param{\boldsymbol{\Theta}}
\newcommand{\htheta}{\hat{\theta}}
\newcommand{\hparam}{{\hat{\param}}}
\newcommand{\hParam}{{\widehat{\Param}}}
\newcommand{\PR}{\mathbb{P}}
\def\bH{\textbf{H}}
\def\param{\boldsymbol{\theta}}
\def\Param{\boldsymbol{\Theta}}
\def\<{\langle}
\def\>{\rangle}
\def\0{{\mathbf 0}}
\def\.{\hskip.06cm}
\def\dist{{\text {\rm dist} }}
\def\P{\mathbb{P}}
\def\R{\mathbb{R}}
\def\E{\mathbb{E}}
\def\P{\mathbb{P}}
\def\eps{\varepsilon}
\def\i{\textbf{i}}
\newcommand*\rel@kern[1]{\kern#1\dimexpr\macc@kerna}
\newcommand*\widebar[1]{%
	\begingroup
	\def\mathaccent##1##2{%
		\rel@kern{0.8}%
		\overline{\rel@kern{-0.8}\macc@nucleus\rel@kern{0.2}}%
		\rel@kern{-0.2}%
	}%
	\macc@depth\@ne
	\let\math@bgroup\@empty \let\math@egroup\macc@set@skewchar
	\mathsurround\z@ \frozen@everymath{\mathgroup\macc@group\relax}%
	\macc@set@skewchar\relax
	\let\mathaccentV\macc@nested@a
	\macc@nested@a\relax111{#1}%
	\endgroup
}
\DeclareMathOperator*{\argmax}{arg\,max}
\theoremstyle{thmstyleone}%
\newtheorem{theorem}{Theorem}
\newtheorem{prop}[theorem]{Proposition}%
\newtheorem{lemma}[theorem]{Lemma}
\newtheorem{corollary}[theorem]{Corollary}
\newtheorem{claim}[theorem]{Claim}
\theoremstyle{thmstyletwo}%
\newtheorem{remark}{Remark}%
\theoremstyle{thmstylethree}%
\newtheorem{definition}{Definition}%
\newtheorem{assumption}[definition]{Assumption}%
\def\blfootnote{\xdef\@thefnmark{}\@footnotetext}
\begin{document}

\title[Likelihood landscape of binary latent model on a tree]{Likelihood landscape of binary latent model on a tree
}


\author[1]{\fnm{David} \sur{Clancy Jr.}}\email{djclancyjr@gmail.com}

\author[1]{\fnm{Hanbaek} \sur{Lyu}}\email{hlyu@math.wisc.edu}

\author*[1]{\fnm{Sebastien} \sur{Roch}}\email{roch@math.wisc.edu}

\affil[1]{\orgdiv{Department of Mathematics}, \orgname{University of Wisconsin}, \orgaddress{\street{480 Lincoln Dr.}, \city{Madison}, \postcode{53706}, \state{WI}, \country{USA}}}


\abstract{ 

We investigate the optimization landscape of maximum likelihood estimation (MLE) for the Cavender-Farris-Neyman (CFN) model, a two-state latent tree model fundamental to statistical phylogenetics and the ferromagnetic Ising model. Although the log-likelihood function is non-concave and may admit many critical points, simple coordinate maximization algorithms are remarkably effective in practice. We provide the first theoretical justification for this success. We prove that sufficiently deep inside the reconstruction regime, the population log-likelihood is strongly concave and smooth within a box around the true parameter, whose size is independent of tree topology and number of leaves. This fundamental result implies that the empirical landscape shares  these regularity properties  with high probability given polynomial sample complexity and also that coordinate maximization converges exponentially fast to an $O(1/\sqrt{m})$-consistent MLE. Our analysis centers on a novel decay property of the population Hessian: diagonal entries remain large while off-diagonal entries decay exponentially with graph distance. These results provide rigorous theoretical evidence for the efficacy of likelihood-based tree inference and suggest broader principles for latent variable models.

}

\keywords{Cavender-Farris-Neyman model, latent tree models, phylogenetic inference, maximum likelihood estimation, likelihood landscape, broadcasting on trees}


\pacs[MSC Classification]{62F10, 62F12, 60J10, 60K35, 92D15}

\maketitle

	\section{Introduction}
	\label{sec:Introduction}

    Maximum likelihood estimation (MLE) for estimating an unknown model parameter is a fundamental technique in statistics and machine learning. In this framework, one considers a parametric probabilistic model $(\mathbb{P}_{\param}; \param \in \Param \subset \mathbb{R}^d)$ and a dataset $x_1, \dots, x_m$ assumed to be i.i.d. observations from $\mathbb{P}_{\theta^*}$ for some unknown $\param^* \in \Param$. The goal is to find an estimate $\hparam_{\textup{MLE}} \in \Param$ such that
\begin{align}
\hat{\param}_{\textup{MLE}} &\in \arg \max_{\param \in \Theta} \,\, \left[ \ell(\param; x_1, \dots, x_m) := \frac{1}{m} \sum_{j=1}^m \log \mathbb{P}_{\param}(x_j) \right], \label{eq:generic_log_likelihood}
\end{align} 
thereby maximizing the likelihood of the observed data.

Classical theory \cite{Cramer.46, wald1949note} establishes that whenever the population landscape (i.e., the $m \to \infty$ limit)
\begin{align*}
    \mathbb{E}[\ell(\param)] = \mathbb{E}_{X \sim \mathbb{P}_{\param^*}} \log \mathbb{P}_{\param}(X),
\end{align*}
is concave and maximized at a unique point coinciding with the true parameter $\param^*$, then $\hat{\param}_{\textup{MLE}}$ serves as a consistent estimator of $\param^*$. Much of the rigorous theoretical foundation of MLE assumes that the properties of this population landscape extend to the empirical landscape $\ell(\param; x_1, \dots, x_m)$. This extension relies on the ability to approximate $\mathbb{E}[\ell(\param)]$ by $\ell(\param; x_1, \dots, x_m)$ such that optimizing the latter approximately recovers $\param^*$ \cite{LB.19}. However, many natural MLE problems—such as those arising from mixture models \cite{redner1984mixture, Murphy.12}, errors-in-variables regression \cite{vanhuffel1991total}, or blind deconvolution \cite{ahmed2014blind}—exhibit highly non-concave likelihood landscapes in both the population and the empirical levels, which significantly complicates the analysis of MLEs.

In this paper, we study a fundamental parameter estimation problem for two-state latent models on broadcasting trees known as \textit{branch-length estimation} \cite{guindon2003simple} under the Cavender-Farris-Neyman (CFN) model \cite{neyman1971molecular, farris1973probability, cavender1978taxonomy}. This model is widely used to study molecular evolution along phylogenetic trees. Broadly speaking, the goal is to estimate the flip probabilities in noisy channels along the edges of a broadcasting tree, given signals observed only at the tips (leaves) of the tree. While arising in phylogenetics, such models and their various inference problems have applications in theoretical computer science, signal processing, and statistical physics \cite{mossel:22}. 

The associated MLE problem is challenging. Marginalizing hidden variables makes the likelihood non-concave, and even for closely related latent-tree models the sharpest known landscape guarantees are population-level: for Gaussian latent tree models, \cite{dagan2022em} proves that the true parameter is the unique non-trivial stationary point of the population log-likelihood, where ``non-trivial'' excludes boundary edge correlations equal to 0 or 1. Thus, in that setting, spurious interior stationary points are absent. By contrast, empirical likelihoods for discrete phylogenetic models can have substantial algebraic complexity. A standard measure of this complexity is the maximum-likelihood degree (ML degree): for a generic data vector, it is the number of complex solutions to the likelihood critical equations, divided by the generic fiber cardinality of the model parametrization, i.e., the number of parameter values that generically map to the same model distribution. Garc\'ia Puente, Garrote-L\'opez, and Shehu \cite{GGMS.24} compute ML degrees for small group-based phylogenetic models, including the CFN model, and show that these counts can already be large; importantly, ML degree counts complex generic-data critical points, not real or biologically admissible stationary points, and it does not address the population landscape. The real likelihood landscape is nevertheless known to be nontrivial even on small trees: Steel \cite{Steel.94} gives a four-leaf CFN example with two global maximizers on the boundary of the parameter space, and related work exhibits multiple optima or local maxima for phylogenetic likelihoods \cite{rogers1999multiple, chor2000multiple}. These difficulties motivate our semi-global approach: rather than characterizing the entire non-concave landscape, we identify a dimension-independent neighborhood of the true parameter on which the population--and, with enough samples, empirical--likelihood is well conditioned.

Despite these theoretical hurdles, likelihood maximization has proven remarkably effective in practice. For instance, Guindon and Gascuel \cite{guindon2003simple, guindon2010new} developed PHYML, a coordinate-ascent algorithm that performs well empirically even with a few coordinate updates. Other widely used likelihood-based methods include RAxML \cite{stamatakis2014raxml} and IQ-TREE \cite{nguyen2015iq}. However, a rigorous theoretical explanation for the success of these methods in the discrete setting has remained elusive. Recent advances in statistical estimation theory have emphasized the importance of analyzing the geometric structure of likelihood landscapes to bridge this gap between practice and theory \cite{ma2017implicit, chen2019inference, chen19nonconvex}.

In this work, we establish a condition, regardless of the tree's size, under which the population likelihood landscape becomes smooth and well-behaved within a box centered at the true parameter. This fundamental likelihood landscape result, combined with our recent work on its finite-sample applications \cite{CLR:25}, shows that a simple coordinate maximization algorithm converges quickly and reliably to the correct estimate if initialized in this box. These results provide the first theoretical guarantees for the optimization methods commonly used in phylogenetic and machine learning analyses of discrete latent tree models. Importantly, our semi-global landscape analysis is distinct from standard local analyses, as it establishes regularity within a region whose size is independent of the problem dimensions.

    \begin{figure}[h!]
        \centering
        \includegraphics[width=0.7\linewidth]{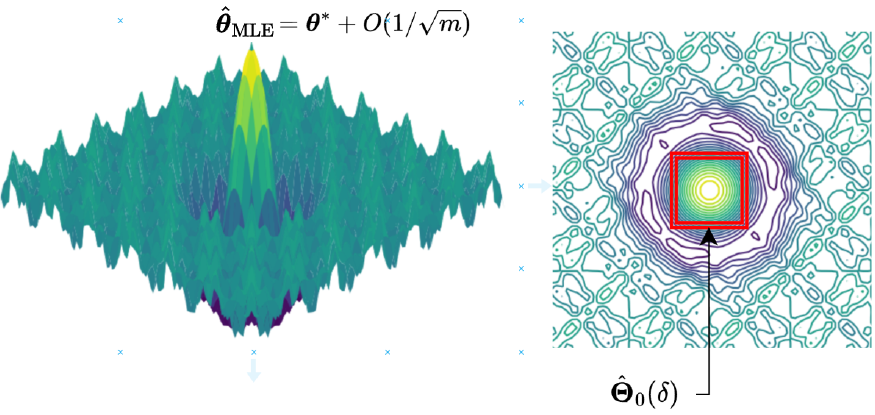}
        \caption{A cartoon depiction of non-concave 2D likelihood landscape (left) and its contour plot (right). Thm. \ref{thm:MLE_landscape_sample} and \ref{thm:MLE_estimation_error} asserts that the empirical likelihood landscape has a well-conditioned strongly concave landscape over ``universal box'' $\hParam_{0}(\delta)$ of size order 1 
        around the true parameter $\param^{*}$. Thm. \ref{thm:MLE_opt_alg} asserts that coordinate maximization initialized in the box converges to the MLE $\hparam_{\textup{MLE}}$ in $O(1)$ iterations. }
        \label{fig:enter-label}
    \end{figure}

    \subsection{The CFN model}
    
Let $T = (V,E)$ be an unrooted tree where all vertices have degree 1 or 3. The vertices with degree 1, called leaves and denoted by the set $L$, represent the observed variables, while the remaining vertices represent latent variables. Let $n = |L|$ denote the number of leaves. A counting argument shows that such a tree must have $2n-2$ total vertices. 

At each vertex $v\in V$, we have a binary state $\sigma_v\in\{\pm 1\}$ which starts from its stationary distribution and evolves along the branches, i.e., edges, of the tree according to a reversible Markov chain with unknown transition matrices. Specifically, there is  an unknown ``true'' parameter vector $\param^{*} = (\theta^*_e;e\in E)$ such that the transition matrices $(P^*_e;e\in E)$ are given by
	\begin{equation*}
		P^*_e = \begin{bmatrix}
			1-p^*_e & p^*_e\\
			p^*_e&1-p^*_e
		\end{bmatrix}=\begin{bmatrix}
			\frac{1+\theta^*_e}{2} & \frac{1-\theta^*_e}{2}\\
			\frac{1-\theta^*_e}{2} &\frac{1+\theta^*_e}{2}
		\end{bmatrix}.
	\end{equation*}
	Here, $p^*_e = \PR_{\param^{*}}(\sigma_u \neq \sigma_v)$ represents the transition probability along edge $e = \{u,v\}$, and the stationary distribution is uniform: $\PR_{\param^{*}}(\sigma_v = 1) = \PR_{\param^{*}} (\sigma_v = -1) = \frac{1}{2}$. The relationship between the transition probabilities and edge parameters is given by
\begin{equation}\label{eqn:peDef}
p^*_e = \frac{1-\theta^*_e}{2},\qquad \theta^*_e = 1-2p^*_e.
\end{equation} 

While $\theta_e$ can theoretically take values in $[-1,1]$, we focus on the regime $\param^{*}\in [0,1]^{E}$ where neighboring variables are positively correlated (i.e., the ``ferromagnetic regime''). This constraint appears naturally in applications, including in phylogenetics.

The central problem we study is the recovery of the true parameter vector $\param^{*}$ from repeated, independent observations of the states at the leaves of the tree. Formally, let $\sigma^{(1)},\dots,\sigma^{(m)}$ be independent samples from the model described above with true parameter $\PR_{\param^{*}}$. We only observe the leaf states, denoted by $\sigma^{(j)}|_{L} = (\sigma_v^{(j)}; v\in L)$ for $j=1,\dots,m$. Our goal is to estimate $\param^{*}$ from these partial observations.
This estimation problem arises naturally in statistical phylogenetics, where it is known as ``branch length estimation''. In that context, $p^*_e$ can be interpreted as a monotone function of the evolutionary distance, or branch length, along edge $e$ assuming a constant-rate mutation process\footnote{More precisely, $\theta^*_{e}=e^{-2l^*_{e}}$ and $p^*_{e}=\frac{1-\exp{(-2l^*_{e})}}{2}$ where $l^*_e$ represents the ``evolutionary distance'' (i.e., time scaled by mutation rate). We work directly with the $\theta^*_e$ parameterization throughout this paper.}.
See, e.g.,~\cite{felsenstein2004inferring,yang2014molecular} for background on statistical phylogenetics.

	\subsection{The maximum likelihood landscape}
	
	It has long been an open problem
	whether standard coordinate ascent algorithms for maximum likelihood can solve the branch length estimation problem. Namely, the log-likelihood of the leaf observations $\sigma^{(1)},\dots,\sigma^{(m)}$ under the model with parameter $\hparam$ is given by 
	\begin{align}\label{eq:def_log_likelihood}
	\ell(\hparam):=\ell(\hparam; \sigma^{(1)}|_{L},\dotsm, \sigma^{(m)}|_{L}) := \frac{1}{m} \sum_{j=1}^m  \log \PR_{\hparam}(\sigma_v = \sigma^{(j)}_v , \,\,\forall v\in L).
	\end{align} 
	We then seek to find the maximum likelihood estimator (MLE) as 
	\begin{align}\label{eq:MLE_def}
		\hparam_{\textup{MLE}} \in \argmax_{\hparam\in [0,1]^{E}}  \,\,  \ell(\hparam; \sigma^{(1)}|_{L},\dotsm, \sigma^{(m)}|_{L}).
	\end{align}

    As with many latent variable or mixture models \cite{balakrishnan2017statistical,dagan2022em}, the objective function in \eqref{eq:MLE_def} is non-concave and may admit multiple critical points for generic data. See also~\cite{GGMS.24} and references therein. Steel~\cite{Steel.94} provided an explicit example, reproduced in Figure \ref{fig:fail} below, where there are multiple global maximizers to \eqref{eq:MLE_def}. The likelihood function $L(\hparam;\sigma^{(1)},\sigma^{(2)})$ for Figure \ref{fig:fail} has exactly two maximum values
	\begin{equation*}
		\param^1 = (0,1,0,1,1)\qquad\textup{and}\qquad \param^2 = (1,0,1,0,1),
	\end{equation*} 
	which are both on the boundary $\partial [0,1]^E$. 
	\begin{figure}[h!]
		\centering
		\includegraphics[width=0.45 \linewidth]{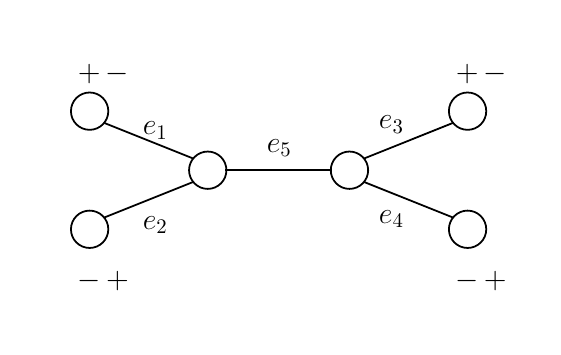}
		\caption{Steel's example. There are four leaves, the top two vertices have states $\sigma_v^{(1)} = +1$, $\sigma^{(2)}_v = -1$, and the bottom two vertices have $\sigma^{(1)}_u = -1$, $\sigma^{(2)}_u = +1$.}
		\label{fig:fail}
	\end{figure}
     In Figure \ref{fig:1} we provide a simple visualization of the likelihood landscape for the 4-leaf tree in Figure \ref{fig:fail}. 
      \begin{figure}[h!]
        \centering
        \includegraphics[width=0.6\linewidth]{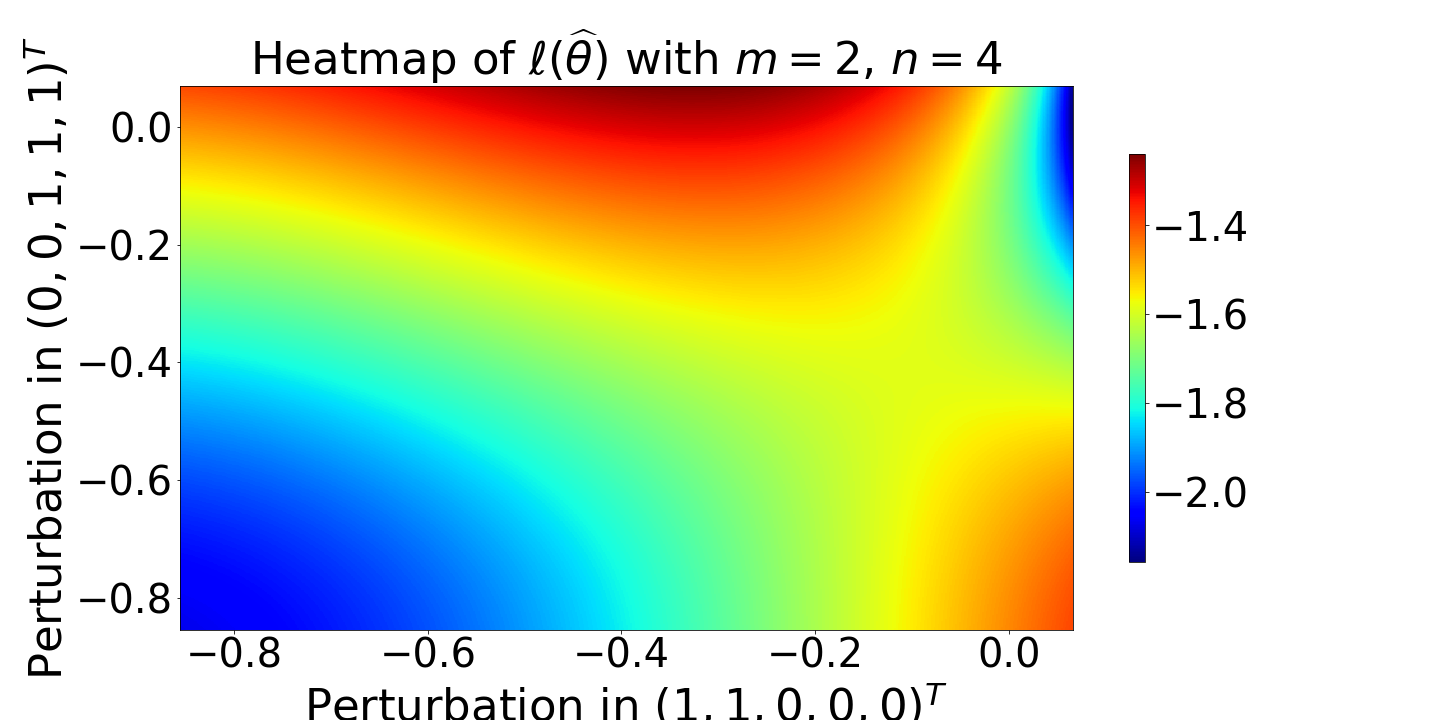}
        \caption{
        A 2D slice of a 5D 2-sample empirical log-likelihood $\ell(\hparam;\sigma^{(1)},\sigma^{(2)})$ containing $\param^*$ on a tree with $n=4$ leaves.}
        \label{fig:1}
    \end{figure}
See also~\cite{rogers1999multiple,chor2000multiple} for related results.

    \subsection{The one-dimensional likelihood landscape} 

    Several results have been obtained about the shape of the one-dimensional likelihood landscape, i.e., the coordinate-wise optimization problem. Fukami and Tateno \cite{FT.89} showed that, under a four-state model known in phylogenetics as F81~\cite{felsenstein1981}, when restricted to a single edge parameter $\htheta_{e}$, the log-likelihood $\ell$ is strictly concave and attains a unique maximizer. For more general discrete models, Dinh and Matsen~\cite{dinh_matsen_2017} provide conditions under which the one-dimensional likelihood is guaranteed to have at most one stationary point, a condition satisfied by the CFN model in particular. 

    Recently, with Sly \cite{clancy2025likelihood}, we showed that under the CFN model, the gradient of the population log-likelihood function is characterized by 
    the following approximation:
    \begin{equation}\label{eqn:gradient}
			\frac{\partial}{\partial \htheta_e} \E_{\param^{*}} \left[ \ell(\hparam;\sigma|_{L}) \right] = \frac{\theta^*_e-\htheta_e}{1-\htheta_e^{2} }  +O(\delta),
	\end{equation}
    where the parameter $\delta>0$ controls the scale of the transition probabilities along the edges. 
    From this result, it follows for instance that standard coordinate maximization for solving \eqref{eq:MLE_def} achieves an estimation of the true parameter within $L_{\infty}$-error of $O(\delta^{2})$ in a single round in the population limit. 
    A key tool developed in~\cite{clancy2025likelihood} is a sensitivity analysis of ancestral reconstruction, i.e., hidden state estimation (see Theorem~\ref{thm:Robust}). Roughly speaking, it says that the posterior mean (or ``magnetization''; see \eqref{eqn:def_magnetization}) of the state of a node (expressed as a spin value), remains a good estimate for the actual state there even under approximate parameters. This technical result is key to our analysis in this paper as well. 
    

\subsection{Contribution.} 

Our work contributes to this line of research by providing a fundamental population likelihood landscape result for the CFN model. 
    
    \begin{description}[itemsep=0.1cm, leftmargin=0.6cm]
        \item{1.} (\textit{Population likelihood landscape}; Thm. \ref{thm:DiagDominant}, Cor. \ref{cor:MLE_landscape}) There exists a box $\mathtt{B}\subseteq \Param$ with constant $L_{\infty}$-norm side lengths around the true parameter $\param^{*}$ such that the population likelihood $\E[\ell(\param)]$ is strongly concave and smooth on $\mathtt{B}$. 
        (See the illustration in Fig. \ref{fig:enter-label}.)
        
    \end{description}

In our recent work \cite{CLR:25}, we have already established that such a population  likelihood landscape result can be used to establish sample complexity and consistency of the MLE and also obtain computational guarantees of coordinate maximization for computing the MLE. We summarize these applications below. 

    \begin{description}[itemsep=0.1cm, leftmargin=0.6cm]

         \item{2.} (\textit{Empirical likelihood landscape}; Thm. \ref{thm:MLE_landscape_sample}) With enough samples $m$ (polynomial in the size of the tree in the balanced case), the empirical log-likelihood is strongly concave and smooth on the same universal box $\mathtt{B}$ with high probability.

        \item{3.} (\textit{Statistical estimation guarantee}; Thm. \ref{thm:MLE_estimation_error}) For any fixed problem size, the MLE is $O(1/\sqrt{m})$-consistent with the true parameter with arbitrarily large probability. 

        \item{4.} (\textit{Computational guarantee of coordinate maximization}; Thm. \ref{thm:MLE_opt_alg}) The iterates of the coordinate maximization algorithm 
        converge exponentially fast to the confined MLE $\hparam_{\textup{MLE}}$ with a rate independent of the tree, provided a sufficiently close initial point.
    \end{description}

    The overarching goal in this work is to understand the structure of the optimization landscape of the maximum likelihood estimation problem in \eqref{eq:MLE_def}. In particular, we seek to understand the structure of the Hessian of the log-likelihood function in \eqref{eq:def_log_likelihood}. Our likelihood landscape results above show that, even though the population and empirical landscapes may be non-concave and contain numerous local (or even global) maximizers, it is very well conditioned inside the universal box $\mathtt{B}$ where there is a unique maximizer and all other local maximizers lie strictly outside this region. It is important that this region does not shrink as the sample size or the problem dimension grows large, which is typically the case in standard local landscape analysis, relying on positive definiteness of the Fisher information and continuity of the Hessian around the true parameter. Our analysis is semi-global in its nature. 
    
    By applying our semi-global likelihood landscape analysis,  we show that one can recover the true parameter $\param^*$ using standard likelihood maximization techniques and assess the incurred statistical and computational errors rigorously.

    \subsection{Organization}

    The remainder of this paper is organized as follows. In Section \ref{sec:main_results}, we formally state our main results, detail our assumptions, and provide high-level sketches of the analysis. In Section \ref{sec:preliminary_lemmas}, we present preliminary lemmas, including explicit representations for the gradient and Hessian of the log-likelihood function using the concept of ``magnetizations'' (Definition \ref{def:magnetization}). 
    In Section \ref{sec:Hessian_off_diag_pf}, we provide the second-order analysis of the population landscape (Theorem \ref{thm:DiagDominant}) by bounding the diagonal entries of the expected Hessian (Section \ref{sec:diagonal_Hessian_pf}) and its off-diagonal entries. To handle the long-range correlations inherent in the model, we utilize a grouping strategy that decouples dependent random variables into independent blocks. Finally, Section 6 contains the detailed proofs for the technical lemmas underpinning the Hessian analysis.
    
    \section{Statement of main results}
\label{sec:main_results}

In this section, we formally characterize the geometry of the log-likelihood landscape in the neighborhood of the true parameter $\param^*$. Moving beyond projection-based analysis, we provide a multidimensional second-order treatment under the global assumption that edge transition probabilities scale as $\Theta(\delta)$ for a sufficiently small $\delta > 0$. 

Our primary result establishes that the population log-likelihood exhibits strong regularity within a ``universal box'' $\mathtt{B} \subset \Param$ of $L_\infty$-radius $\Theta(\delta)$ centered at $\param^*$. Specifically, as detailed in Theorem~\ref{thm:DiagDominant}, the expected landscape is $\Theta(\delta^{-1})$-strongly concave and $\Theta(\delta^{-1})$-smooth over $\mathtt{B}$. A key feature of this result is its independence from the specific tree topology $T$ and the leaf count $n$. This robust geometric structure ensures that the likelihood remains well-conditioned even as the problem dimension grows, providing the foundation for the finite-sample guarantees and the exponential convergence of coordinate maximization algorithms.

	We formally state our main results in this section. We also sketch the proofs. Detailed proofs follow in subsequent sections.
	We begin with some assumptions.
	
	\subsection{Assumptions} 
	Our analysis operates under the assumption that we are well within the reconstruction regime, that is, that mutation probabilities are sufficiently small that ancestral states can be reconstructed with better-than-random accuracy.
		Throughout the paper, constants are numbered by the equation in which they appear. We introduce the following restricted parameter spaces that depend on $\delta$.

    \begin{definition}[Restricted parameter spaces]\label{def:parameter_spaces}
        Let $ C_{\ref{eqn:pHatBounds}}> C_{\ref{eqn:pBounds}}>c_{\ref{eqn:pBounds}}> c_{\ref{eqn:pHatBounds}}>0$ be fixed constants. For a fixed $\delta>0$, define two subsets $\Param_{0}(\delta)\subseteq \hParam_{0}(\delta)\subset[-1,1]^{E}$  
        		by 
        		\begin{align}
        			\Param_{0}(\delta) &:=\left\{ (\theta^*_{e}=1-2p^*_{e}\,;\, e\in E) \,\bigg|\, c_{\ref{eqn:pBounds}} \delta \le p^*_e \le C_{\ref{eqn:pBounds}}\delta 
        			\textup{  $\forall$}e\in E \right\}=[1-2C_{\ref{eqn:pBounds}}\delta, 1-2c_{\ref{eqn:pBounds}}\delta]^{E}, \label{eqn:pBounds} \\
        			\hParam_{0}(\delta) &:=\left\{ (\htheta_{e}=1-2\hat{p}_{e}\,;\, e\in E) \,\bigg|\, c_{\ref{eqn:pHatBounds}}\delta \le \hat{p}_e \le C_{\ref{eqn:pHatBounds}} \delta
        			\textup{  $\forall$}e\in E\right\} = [1-2C_{\ref{eqn:pHatBounds}}\delta, 1-2c_{\ref{eqn:pHatBounds}}\delta]^{E}. \label{eqn:pHatBounds}
        		\end{align}
    \end{definition}

\noindent Our reconstruction bounds also require the following mild assumptions.
\begin{assumption}[Parameter regime]\label{assumption1} 
		Assume that $\param^{*}\in \Param_{0}(\delta)$ and $\hparam\in \hParam_{0}(\delta)$. Moreover, the constants $C_{\ref{eqn:pHatBounds}}>  C_{\ref{eqn:pBounds}}>c_{\ref{eqn:pBounds}}> c_{\ref{eqn:pHatBounds}}$ satisfy $C_{\ref{eqn:pHatBounds}}\ge 2c_{\ref{eqn:pHatBounds}}$.
	\end{assumption}
	\noindent We will frequently say that $\param^{*}$ or $\hparam$ satisfy \eqref{eqn:pBounds} and \eqref{eqn:pHatBounds}, respectively, which means that these parameters belong to the sets defined in these equations.

	 \subsection{Comments on notation}

 Before continuing, we introduce the following convention that is used throughout the paper. 
   
Given any non-negative function $f(\sigma,\hparam)$ depending on the state vector $\sigma = (\sigma_u;u\in T)$ and the estimator $\hparam\in \hParam_0$ satisfying Assumption \ref{assumption1}, we write $\E_{\param^{*}} [f(\sigma,\hparam)] = O(\delta^\alpha)$, $\Omega(\delta^\alpha)$ and $\Theta(\delta^\alpha)$ for some $\alpha\in \R$ to mean the following:
 \begin{align*}
   & \E_{\param^{*}}\left[f\left(\sigma,\hparam\right)\right]= O(\delta) \textup{  if } \exists K>0, \delta_0\in(0,1)\textup{ s.t. } \forall \delta\in(0,\delta_0], \,\, \sup_{(\param,\hparam )\in \Param_0\times\hParam_0} \E_{\param^{*}}\left[f\left(\sigma,\hparam\right)\right] \le K\delta^\alpha \\
   &\E_{\param^{*}}\left[f\left(\sigma,\hparam\right)\right] = \Omega (\delta^\alpha) \textup{  if } \exists K>0, \delta_0\in(0,1)\textup{ s.t. } \forall \delta\in(0,\delta_0],\,\, \inf_{(\param,\hparam )\in \Param_0\times\hParam_0} \E_{\param^{*}}\left[f\left(\sigma,\hparam\right)\right] \ge K\delta^\alpha \\
   & \E_{\param^{*}}\left[f\left(\sigma,\hparam\right)\right]= \Theta(\delta^\alpha) \textup{  if both }\E_{\param^{*}}\left[f\left(\sigma,\hparam\right)\right] = O(\delta^\alpha) \textup{ and } \E_{\param^{*}}\left[f\left(\sigma,\hparam\right)\right]= \Omega(\delta^\alpha)  
 \end{align*} The constants $K = K(c_{\ref{eqn:pBounds}},C_{\ref{eqn:pBounds}}, c_{\ref{eqn:pHatBounds}},C_{\ref{eqn:pHatBounds}})\in(0,\infty)$ and $\delta_0 = \delta_0(c_{\ref{eqn:pBounds}},C_{\ref{eqn:pBounds}}, c_{\ref{eqn:pHatBounds}},C_{\ref{eqn:pHatBounds}})$ depend on the constants appearing in Assumption \ref{assumption1}, but otherwise \textit{\textbf{independent of both the size and topology of the tree}} $T$. 
 
 We will similarly write $f(\sigma,\hparam) = O(\delta^\alpha)$ if there exists a constant $K = K(c_{\ref{eqn:pBounds}},C_{\ref{eqn:pBounds}}, c_{\ref{eqn:pHatBounds}},C_{\ref{eqn:pHatBounds}})\in(0,\infty)$ and $\delta_0 = \delta_0(c_{\ref{eqn:pBounds}},C_{\ref{eqn:pBounds}}, c_{\ref{eqn:pHatBounds}},C_{\ref{eqn:pHatBounds}})$
 \begin{equation*}
     \PR_{\param^{*}}\left(f(\sigma,\hparam) \le  K\delta^\alpha\right) = 1 \textup{ for all }(\param,\hparam)\in \Param_0\times\hParam_0 \textup{ and }\delta\in (0,\delta_0].
 \end{equation*}
 Here we remind the reader that $\Param_0, \hParam_0$ are sets that depend on $\delta$. We similarly, use $\Omega(\delta^\alpha)$ and $\Theta(\delta^\alpha)$.
 
We will also reserve the symbol $\kappa$ for a generic constant with the following property:
\begin{equation*}
    \exists \delta_0 = \delta_0(c_{\ref{eqn:pBounds}},C_{\ref{eqn:pBounds}}, c_{\ref{eqn:pHatBounds}},C_{\ref{eqn:pHatBounds}})\in (0,1)\textup{ s.t. } \kappa\in[\delta_0 , 1-\delta_0].
\end{equation*} The precise value of $\kappa$ may change from line to line.

  We will specify precise constants in the statements of results, but we will otherwise use the above convention for constants.

	\subsection{Population  likelihood landscape}

	The main  result in this paper, Theorem \ref{thm:DiagDominant}, concerns the structure of the expected Hessian of the log-likelihood function $\ell$ in \eqref{eq:MLE_def}. We establish that the diagonal entries of the expected Hessian are of order $O(\delta^{-1})$ while the off-diagonal entries are exponentially small in the shortest path distance between the two corresponding edges. For its statement, we let $\mathbf{H}(\hparam)\in \R^{|E|\times |E|}$ denote the Hessian of the expected log-likelihood, whose $(e,f)$ entry for $e,f\in E$ is defined as 
    \begin{align}\label{eq:def_pop_hessian}
     \mathbf{H}(\hparam)_{e,f} = \frac{\partial^2}{\partial \htheta_e \partial\htheta_f} \E_{\param^{*}}\left[\ell(\hparam;\sigma|_{L})\right]= \E_{\param^{*}}\left[\frac{\partial^2}{\partial \htheta_e \partial\htheta_f}\ell(\hparam;\sigma|_{L})\right].     \end{align}

	\begin{theorem}[Population log-likelihood landscape: Hessian]\label{thm:DiagDominant} 
		There exist constants $C_{\ref{eqn:HessianDiag}}, \widetilde{C}_{\ref{eqn:HessianDiag}}$, $C_{\ref{eqn:HessiangOffDiag}}$, $C_{\ref{eqn:HessiangOffDiag_Var}}$,  and $\delta_{\ref{eqn:HessianDiag}}$ that depend only on $c_{\ref{eqn:pBounds}},C_{\ref{eqn:pBounds}}, c_{\ref{eqn:pHatBounds}},C_{\ref{eqn:pHatBounds}}$ such that the following holds for any binary tree $T$, any $\delta<\delta_{\ref{eqn:HessianDiag}}$, and $\hparam\in \hParam_{0}(\delta)$: Assume \ref{assumption1} holds then for all edges $e,f$,
		\begin{description}
			\item[(i)] (\textit{Large diagonal entries in expectation}) 
			\begin{align} 
            - \frac{\widetilde{C}_{\ref{eqn:HessianDiag}}}{\delta}\le   
             \mathbf{H}(\hparam)_{e,e}
            \le- \frac{C_{\ref{eqn:HessianDiag}}}{\delta}. \label{eqn:HessianDiag}
			\end{align}
			
			\item[(ii)] (\textit{Small off-diagonal entries in expectation})  
			\begin{align}
                 \mathbf{H}(\hparam)_{e,f}
				\le (C_{\ref{eqn:HessiangOffDiag}}\delta)^{ \lfloor \frac{(\textup{dist}(e,f)-1)\lor 0}{4}\rfloor  }. \label{eqn:HessiangOffDiag}
			\end{align}

		\end{description}

	\end{theorem}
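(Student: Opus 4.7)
My plan starts from the explicit magnetization representations of the partial derivatives of $\ell(\hparam; \sigma|_L) = \log \PR_{\hparam}(\sigma|_L)$ developed in Section~\ref{ssec:MagLemmas}, which express $\partial_{\htheta_e}\ell$ and $\partial^2_{\htheta_e \htheta_f}\ell$ as polynomial combinations of posterior spin moments (magnetizations) at the endpoints of the relevant edges. For part (i), I would differentiate the gradient identity
\[ \partial_{\htheta_e}\E_{\param^{*}}[\ell(\hparam;\sigma|_L)] \;=\; \frac{\theta^*_e - \htheta_e}{1-\htheta_e^2} + O(\delta) \]
a second time in $\htheta_e$; the principal term becomes $-1/(1-\htheta_e^2) = -\Theta(\delta^{-1})$, since $1-\htheta_e^2 = 4\hat p_e(1-\hat p_e) = \Theta(\delta)$ under \ref{assumption1}. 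The remaining second-order contributions (from the $O(\delta)$ remainder and from the curvature of the neighbouring-edge magnetization factors) should be shown to be at most $O(1)$ using the robustness of magnetizations (Theorem~\ref{thm:Robust}) applied at scale $\delta$, and are therefore dominated by the leading $-\Theta(\delta^{-1})$ term. The two-sided bound \eqref{eqn:HessianDiag} follows by pinning down the constants in terms of the four constants appearing in Definition~\ref{def:parameter_spaces}.

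\textbf{Off-diagonal bound.} For $e \ne f$ at tree distance $d = \dist(e,f)$, the key observation is that $\partial^2_{\htheta_e\htheta_f}\ell$ localizes to the edge path $P = P(e,f)$ between $e$ and $f$: using the tree-Markov property one can reduce $\mathbf{H}(\hparam)_{e,f}$ to an expectation of a product $\prod_{g\in P} W_g(\sigma|_L)$, where each $W_g$ is a bounded, magnetization-based factor attached to edge $g$ whose size is controlled by the $\Theta(\delta)$ transition probability along $g$. Consecutive factors are strongly dependent because they share an endpoint spin, so a factor-by-factor moment bound is too crude. My approach, following the roadmap in Section~\ref{sec:Hessian_off_diag_pf}, is to group the edges of $P$ into consecutive blocks of four, construct surrogate variables $\widetilde W_j$ that pointwise dominate $\bigl|\prod_{g\in \text{block } j} W_g\bigr|$ and that are mutually \emph{independent} across blocks (Lemma~\ref{lem:independentWj}), and then invoke Lemmas~\ref{lem:4terms}--\ref{lem:nearDiagonal} to obtain $\E_{\param^{*}}[\widetilde W_j] = O(\delta)$ per block. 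Taking the product yields the claimed $(C_{\ref{eqn:HessiangOffDiag}}\delta)^{\lfloor(d-1)/4\rfloor}$ bound, while the small-distance regime $d\le 4$ is handled by the trivial bound $|W_g|\le 1$.

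\textbf{Main obstacle.} The hardest part will be constructing the independent surrogates $\widetilde W_j$ and proving the per-block $O(\delta)$ estimate uniformly in the topology of $T$ and in the estimator $\hparam$. Each $W_g$ depends globally on $T$ through conditional means of hidden spins given all of $\sigma|_L$, so extracting effective independence requires carefully conditioning on boundary spins at the endpoints of each block to invoke the tree-Markov property and decouple the subtrees used in defining the magnetizations within each block. The choice of blocks of size four provides just enough slack to sacrifice two edges per block as buffers, so that a central edge in each block can be controlled by a genuinely independent random variable. A key supporting tool is Theorem~\ref{thm:Robust}, which allows one to pass from $\hparam$-magnetizations to $\param^{*}$-magnetizations at $O(\delta)$ cost, after which the $\Theta(\delta)$ transition probabilities give the desired $O(\delta)$-per-block bound. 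Verifying that all the approximation errors accumulate harmlessly and remain independent of $|E|$ is where the main technical care will be required.
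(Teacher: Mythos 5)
Your plan for part (i) has a genuine gap. You propose to get \eqref{eqn:HessianDiag} by differentiating the approximation $\partial_{\htheta_e}\E_{\param^{*}}[\ell]=\frac{\theta^*_e-\htheta_e}{1-\htheta_e^2}+O(\delta)$ once more in $\htheta_e$. That step is not justified: an $O(\delta)$ bound on a remainder says nothing about its $\htheta_e$-derivative, which a priori could be of order $\delta^{-1}$ and destroy both the sign and the magnitude you want; moreover, even the principal term contributes $-\frac{1}{1-\htheta_e^2}-\frac{2\htheta_e(\theta^*_e-\htheta_e)}{(1-\htheta_e^2)^2}$, and the second piece is also $\Theta(\delta^{-1})$ with uncontrolled sign, since under \ref{assumption1} (where $C_{\ref{eqn:pHatBounds}}\ge 2c_{\ref{eqn:pHatBounds}}$) the mismatch $|\theta^*_e-\htheta_e|$ may be comparable to $1-\htheta_e$. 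The paper instead works with the exact identity $\mathbf{H}(\hparam)_{e,e}=-\E_{\param^{*}}\bigl[(Z_xZ_y)^2/(1+\htheta_e Z_xZ_y)^2\bigr]$ coming from Proposition~\ref{prop:derivative}: the bound $\mathbf{H}_{e,e}\le -C_{\ref{eqn:HessianDiag}}/\delta$ is produced by the event of a flip on $e$ together with good reconstruction at both endpoints, which has probability $\Theta(\delta)$ and on which the integrand is $\Theta(\delta^{-2})$ (Theorem~\ref{thm:Robust} plus the independence of unsigned magnetizations, Claim~\ref{claim:unsigned_mag}); the matching lower bound comes from showing the integrand is $O(1)$ outside an event of probability $O(\delta)$ and at most $O(\delta^{-2})$ always. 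Your sentence that the remaining second-order contributions ``should be shown to be at most $O(1)$ using robustness'' is exactly the content that must be proved, and it is not a consequence of the gradient formula; as written, part (i) of your proposal would not go through.

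For part (ii) you are following the paper's own route (path localization, blocks of four, adversarialized inputs, the independence Lemma~\ref{lem:independentWj}, and the moment Lemmas~\ref{lem:4terms}--\ref{lem:nearDiagonal}), so the strategy is sound, but two of your claims misplace the difficulty. First, the short-distance regime is \emph{not} handled by ``the trivial bound $|W_g|\le 1$'': the per-edge factors $(1-\eta_j^2)/(1+\xi_j\eta_j)^2$ have denominators that can be as small as $\Theta(\delta^2)$, so a single factor can be as large as $\Theta(\delta^{-2})$, and the $\dist(e,f)\in\{0,1,2\}$ case is precisely the content of Lemma~\ref{lem:nearDiagonal}, whose proof is a lengthy case analysis. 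Second, the $O(\delta)$-per-block estimate holds only for the interior blocks $W_i$; the boundary block $\widetilde W_N$ (which carries the extra prefactor $1/(1+\xi_{N+1}\eta_{N+1})^2$) and the residual $R_r$ only have $O(1)$ expectations by Lemma~\ref{lem:5terms}, which is why the exponent in \eqref{eqn:HessiangOffDiag} is $\lfloor\frac{(\dist(e,f)-1)\lor 0}{4}\rfloor$ rather than roughly $\dist(e,f)/4$. If you replace your part (i) argument by the exact magnetization formula and the flip/reconstruction event decomposition, and acknowledge the genuine work in the near-diagonal and boundary-block lemmas, the proof matches the paper's.
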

	\noindent The proof of Theorem \ref{thm:DiagDominant} is quite involved and proceeds in several steps (see Section \ref{sec:Hessian_off_diag_pf}). 
 
	The key consequence of Theorem \ref{thm:DiagDominant} (along with Gershgorin's circle theorem), and the main result of the paper, is that the population landscape is strongly concave and smooth under Assumption \ref{assumption1} and if $\delta$ is smaller than a universal constant. 
	
	\begin{corollary}[Population log-likelihood landscape: strong concavity and smoothness]
		\label{cor:MLE_landscape}
            There exists a constant $\delta_{\ref{eq:expected_Hessian_eval_range}}\in (0,1)$ depending only on $c_{\ref{eqn:pBounds}},C_{\ref{eqn:pBounds}},c_{\ref{eqn:pHatBounds}},C_{\ref{eqn:pHatBounds}}$ such that for all binary trees $T$, $\delta\le \delta_{\ref{eq:expected_Hessian_eval_range}}$ and $\hparam\in \hParam_{0}(\delta)$ and $\param^*\in \Param_0(\delta)$, 
		\begin{align}\label{eq:expected_Hessian_eval_range}
			-\frac{\widetilde{C}_{\ref{eqn:HessianDiag}}}{\delta} - 26  \le \lambda_{\min}(\mathbf{H}(\hparam)) \le \lambda_{\max}(\mathbf{H}(\hparam)) \le -\frac{C_{\ref{eqn:HessianDiag}}}{\delta} + 26,
		\end{align}
		where $\lambda_{\min}(\cdot)$ and $\lambda_{\max}(\cdot)$ denote the minimum and the maximum eigenvalues of a matrix. In particular, 
        in the population limit $m\rightarrow\infty$, the log-likelihood function $\ell$ in \eqref{eq:MLE_def} is
         $(\frac{C_{\ref{eqn:HessianDiag}}}{\delta} - 26)$ - strongly concave and $	(\frac{\widetilde{C}_{\ref{eqn:HessianDiag}}}{\delta}+ 26)$-smooth. In particular, the true parameter $\param^{*}$ is the unique maximizer of $\ell$ over $\hParam_{0}$. 
	\end{corollary}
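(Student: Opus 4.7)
The plan is to deduce the eigenvalue bounds \eqref{eq:expected_Hessian_eval_range} by combining the entry-wise estimates of Theorem \ref{thm:DiagDominant} with Gershgorin's circle theorem, and then to read off strong concavity, smoothness, and the uniqueness of the maximizer as immediate consequences. Gershgorin's theorem gives that every eigenvalue $\lambda$ of the symmetric matrix $\mathbf{H}(\hparam)$ satisfies
\begin{equation*}
\mathbf{H}(\hparam)_{e,e} - R_e \;\le\; \lambda \;\le\; \mathbf{H}(\hparam)_{e,e} + R_e \qquad\text{for some } e\in E,
\end{equation*}
where $R_e := \sum_{f\neq e} |\mathbf{H}(\hparam)_{e,f}|$. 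Part (i) of Theorem \ref{thm:DiagDominant} already pins the diagonal entry in $[-\widetilde{C}_{\ref{eqn:HessianDiag}}/\delta,\, -C_{\ref{eqn:HessianDiag}}/\delta]$, so the entire task reduces to bounding each row sum $R_e$ by a universal constant (in this case $26$) as soon as $\delta$ is small enough.

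For the row-sum bound I would exploit the tree geometry. In a binary tree, each edge has at most four "neighbor" edges (the two other edges incident to each of its endpoints), and a simple induction then shows that the number of edges at edge-distance exactly $d\ge 1$ from a fixed edge $e$ is at most $4\cdot 2^{d-1}$. Combining this count with the off-diagonal bound \eqref{eqn:HessiangOffDiag} and grouping distances in blocks of four (to match the floor in the exponent) yields
\begin{align*}
R_e \;\le\; \sum_{d\ge 1} 4\cdot 2^{d-1}\, (C_{\ref{eqn:HessiangOffDiag}}\delta)^{\lfloor (d-1)/4\rfloor}
\;=\; \sum_{k\ge 0} (C_{\ref{eqn:HessiangOffDiag}}\delta)^{k}\!\!\!\sum_{d=4k+1}^{4k+4} 4\cdot 2^{d-1}
\;=\; 60\sum_{k\ge 0} (16\,C_{\ref{eqn:HessiangOffDiag}}\delta)^{k},
\end{align*}
a geometric series that converges uniformly in the tree as long as $16\,C_{\ref{eqn:HessiangOffDiag}}\delta < 1$. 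Choosing $\delta_{\ref{eq:expected_Hessian_eval_range}}$ so that this sum is at most $26$ delivers \eqref{eq:expected_Hessian_eval_range}. The crucial point is that the geometric decay in $\delta$ provided by Theorem \ref{thm:DiagDominant}(ii) beats the at-most-exponential growth of the number of edges at a given distance, so the bound on $R_e$ is genuinely universal---independent of both $|E|$ and the topology of $T$.

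Once \eqref{eq:expected_Hessian_eval_range} is in hand, the remaining assertions are essentially free. For $\delta$ small enough that $-C_{\ref{eqn:HessianDiag}}/\delta + 26 < 0$, the upper bound on $\lambda_{\max}(\mathbf{H}(\hparam))$ shows that $-\mathbf{H}(\hparam)$ is uniformly positive definite on $\hParam_0(\delta)$, which exactly translates into the stated strong concavity and smoothness moduli for the population log-likelihood. For uniqueness of $\param^{*}$ as the maximizer over $\hParam_0(\delta)$, I would combine two standard facts: first, $\param^{*}\in\Param_0(\delta)\subseteq\hParam_0(\delta)$ and $\param^{*}$ globally maximizes $\hparam\mapsto \E_{\param^{*}}[\ell(\hparam;\sigma|_L)]$ by the nonnegativity of Kullback--Leibler divergence; second, strong concavity on the convex set $\hParam_0(\delta)$ forces at most one critical point there. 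The only quantitative subtlety in the whole argument is the edge-counting/summation step above; everything else is a direct translation of Theorem \ref{thm:DiagDominant} into the language of eigenvalues and convex optimization.
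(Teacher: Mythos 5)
Your overall route is exactly the paper's: Gershgorin's circle theorem, the diagonal bounds from Theorem \ref{thm:DiagDominant}\textbf{(i)}, and a row-sum bound obtained by counting edges at each distance and playing the exponential growth of the number of edges against the decay $(C_{\ref{eqn:HessiangOffDiag}}\delta)^{\lfloor\cdot/4\rfloor}$ from \eqref{eqn:HessiangOffDiag}. Your closing argument for uniqueness of the maximizer (KL nonnegativity plus strong concavity on the convex box) is a sensible addition that the paper leaves implicit.

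Two bookkeeping issues, however, keep your computation from delivering the stated inequality verbatim. First, your edge-distance convention (adjacent edges at $d=1$) is shifted by one relative to the paper's $\dist$, under which adjacent edges have $\dist(e,f)=0$; translated to your convention, \eqref{eqn:HessiangOffDiag} gives the exponent $\lfloor((d-2)\lor 0)/4\rfloor$, not $\lfloor(d-1)/4\rfloor$, so you are claiming one more factor of $\delta$ per block than the theorem provides (equivalently, if your $d$ is meant to be the paper's $\dist$, then your neighbor count does not match and the $\dist=0$ edges are omitted from the sum). Second, even taking your series at face value, its $k=0$ block already contributes $4+8+16+32=60$, so ``choosing $\delta_{\ref{eq:expected_Hessian_eval_range}}$ so that this sum is at most $26$'' is impossible; with the corrected exponent the row sum is bounded by roughly $60/(1-16C_{\ref{eqn:HessiangOffDiag}}\delta)$ plus a bounded term --- a universal constant, but larger than $26$. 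None of this affects the substance: the row sums are $O(1)$ uniformly in the tree, which is all that the $\Theta(\delta^{-1})$ strong concavity and smoothness require, but as written your argument proves \eqref{eq:expected_Hessian_eval_range} only with $26$ replaced by a larger absolute constant. (For what it is worth, the paper's own numerical chain at this step is similarly loose.)
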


    \subsection{Applications to finite-sample likelihood landscape and coordinate maximization}

    The population landscape results stated in the previous section have applications to finite-sample likelihood landscapes, consistency of maximum-likelihood estimators, and convergence of the coordinate maximization algorithm. These applications have recently been published in \cite{CLR:25},  which were obtained assuming the population landscape result in Theorem \ref{thm:DiagDominant} and Corollary \ref{cor:MLE_landscape}. We summarize the main results in \cite{CLR:25}. We emphasize that the results stated in this section are not sharp and we believe our techniques will lead to further improvements.

	First,  the result in Corollary \ref{cor:MLE_landscape} continues to hold for the empirical likelihood landscape as long as the number  $m$ of observed samples is large enough.  To establish such a result, we use a uniform version of matrix Bernstein's inequality \cite[Lem. 4.5]{CLR:25} to show that the Hessian of the empirical log-likelihood function is concentrated near its expectation \textit{uniformly} over the box $\hParam_{0}(\delta)$ with high probability. Then the assertion will follow from the population landscape result  in Corollary \ref{cor:MLE_landscape}. 
	
	\begin{theorem}[Finite-sample log-likelihood landscape: strong concavity and smoothness; Thm. 3.2 in \cite{CLR:25}]
		\label{thm:MLE_landscape_sample}
		Let $\delta\le \delta_{\ref{eq:expected_Hessian_eval_range}}$ and let  $\widehat{\mathbf{H}}$ denote the Hessian of the $m$-sample log-likelihood function $\ell$ in \eqref{eq:def_log_likelihood}.  
		Fix $\eps\in (0,1)$. Then there exists a constant $C_{\ref{eq:sample_complexity}}>0$ such that if 
    \begin{align}\label{eq:sample_complexity}
            m&\ge (C_{\ref{eq:sample_complexity}}/\delta)^{\textup{diam}(T)+8} \, \log (\eps^{-1}),
		\end{align}
        then for any binary tree $T$, and $\param^*\in \Param_0(\delta)$,
\begin{align}\label{eq:finite_sample_ML_landscape_thm}
        \P_{\param^{*}} \Bigg(    -\frac{\widetilde{C}_{\ref{eq:expected_Hessian_eval_range}}}{\delta} - 27  \le \inf_{\param\in \hParam_{0}(\delta)} \lambda_{\min}(\widehat{\mathbf{H}}(\param)) \le \sup_{\param\in \hParam_{0}(\delta)} \lambda_{\max}(\widehat{\mathbf{H}}(\param)) \le -\frac{C_{\ref{eq:expected_Hessian_eval_range}}}{\delta} + 27 \Bigg) \ge 1-\eps.
        \end{align}		
	\end{theorem}

	The required sample complexity for Theorem \ref{thm:MLE_landscape_sample} grows exponentially in the diameter of the tree $T$, which can be as small as $O(\log n)$ when the tree is `well-balanced'. In the latter case, Theorem \ref{thm:MLE_landscape_sample} below establishes a polynomial sample complexity to obtain a strongly concave and smooth optimization landscape  for the MLE problem with high probability. 
    However, we suspect this sample complexity can be improved to depend logarithmically in the number of leaves rather than exponentially in the diameter. We leave this important direction for future work.



    Next,  denote a generic global maximizer of the empirical log-likelihood function $\ell(\cdot)$ (in \eqref{eq:def_log_likelihood}) over $\hParam_{0}(\delta)$ (which always exists) by $\hparam_{\textup{MLE}}$. A particular consequence of Theorem \ref{thm:MLE_landscape_sample} is that $\hparam_{\textup{MLE}}$ is uniquely determined with high probability and enough samples. Specifically, Theorem \ref{thm:MLE_estimation_error} below states that the MLE $\hparam_{\textup{MLE}}$ is a $1/\sqrt{m}$-consistent estimator of the true parameter $\param^{*}$ with high probability. 
    This consistency (up to a constant depending on $T$) does not depend on our choice of norm on $\R^E$ for any fixed tree. We write $\|\cdot\|$ for the $L^2$-norm of a vector.

	\begin{theorem}[Statistical estimation guarantee; Thm. 3.3 in \cite{CLR:25}]\label{thm:MLE_estimation_error}
		Assume the hypothesis of Theorem  \ref{thm:MLE_landscape_sample} holds. Let $E_{\ref{eq:finite_sample_ML_landscape_thm}}$ denote the event in \eqref{eq:finite_sample_ML_landscape_thm}. Fix $\eps\in (0,1)$ and denote $\rho:=\frac{C_{\ref{eq:expected_Hessian_eval_range}}}{\delta} - 27$ and 
         $  C_{\ref{eq:MLE_estimation_error}} :=  \frac{16 \widetilde{C}_{\ref{eq:expected_Hessian_eval_range}}}{C_{\ref{eq:expected_Hessian_eval_range}}}$.        
         Then we have \begin{align}\label{eq:MLE_estimation_error}
			\P_{\param^{*}}	& \left( 	E_{\ref{eq:finite_sample_ML_landscape_thm}} \cap \left\{	\lVert \param^{*} - \hparam_{\textup{MLE}}  \rVert \le C_{\ref{eq:MLE_estimation_error}} \sqrt{|E|/m} \log(|E|/\eps) \right\}  \right)   \ge 1-3\eps
		\end{align}
    provided that $m$ satisfies \eqref{eq:sample_complexity} and $ m \ge \frac{|E|^2}{4 C_{\ref{eq:expected_Hessian_eval_range}}^6c_{\ref{eqn:pHatBounds}}^6 \delta^3 \eps}$.
	\end{theorem}

	Now that we know the MLE  $\hparam_{\textup{MLE}}$ is close to the true parameter $\param^{*}$ with high probability, we turn our attention to how we can compute the  $\hparam_{\textup{MLE}}$ from the observed samples $\sigma^{(1)},\dots,\sigma^{(m)}$ restricted to the leaves. While the log-likelihood function $\ell$ in \eqref{eq:def_log_likelihood} is 
    non-concave, it has the nice structure of being strictly 
    concave when restricted to a single branch length $\hat{\theta}_{e}$ for $e\in E$ (see Lem. \ref{lem:derivative}). Thus, it is natural to cycle through the branch lengths and optimize one at a time, maximizing the one-dimensional restricted likelihood function. This yields the following ``cyclic coordinate maximization'' algorithm for computing the MLE. Namely, given our estimate $\hparam_{k}=(\htheta_{k;e};e\in E)$ after $k$ iterations, the algorithm proceeds by optimizing for a single  branch length $\theta_{k;e}$ by 
 \begin{align}\label{eq:alg_high_level}
 \begin{cases}
 	&\htheta_{k+1;e} \leftarrow \argmax_{\htheta\in [-1,1]} \overline{f}_{k;e}(\htheta), \,\, \textup{where} \,\, \overline{f}_{k;e}(\htheta):=  \frac{1}{m} \sum_{i=1}^{m} \ell( \hparam_{k+1;1:e-1}, \htheta,\hparam_{k;e+1:|E|};\sigma^{(i)}), \\
    &\hparam_{k;i:j} := (\htheta_{k;i},\htheta_{k;i+1},\dotsm,\htheta_{k;j}) 
  \end{cases}
 \end{align}
 assuming that we label the edge set $E$ as integers from 1 through $|E|$. The one-dimensional objectives $\overline{f}_{k;e}(\htheta)$ in \eqref{eq:alg_high_level} are known to be strictly concave \cite{FT.89} and they have a unique maximizer in $(-1,1)$ at a unique critical point: 
 \begin{align}\label{eq:MLE_critical_pt}
 	\frac{\partial}{\partial\htheta_e} \overline{f}_{k;e}(\htheta) = 0. 
 \end{align} 
 The unique zero of the above critical-point equation can be found rapidly by using standard zero-finding algorithms (e.g., \cite{brent2013algorithms}). 
See e.g.~\cite{guindon2003simple} for a practical implementation of this type of algorithm.

 Despite the popularity and the empirical success of the coordinate maximization algorithm above, however, due to the 
 non-concavity of the optimization landscape, there has been no guarantee about the 
 convergence of this algorithm to the maximizer of $\ell$ or the true parameter $\param^{*}$.  Theorem \ref{thm:MLE_opt_alg} below  establishes that the coordinate maximization algorithm above \eqref{eq:alg_high_level}  converges exponentially fast to the MLE  $\hparam_{\textup{MLE}}$, which is within $C(T,\eps,\delta) m^{-1/2}$ from the true parameter $\param^{*}$, provided the initial estimate $\hparam_{0}$ is within $O(\delta)$ from the true parameter $\param^{*}$ in $L_2$ norm.

	\begin{theorem}[Statistical and computational estimation guarantee; Thm. 3.4 in \cite{CLR:25}]\label{thm:MLE_opt_alg}
		Suppose the hypothesis of Theorem \ref{thm:MLE_estimation_error} holds. 
		Let $(\hparam_{k})_{k\ge 0}$ denote the sequence of estimated parameters generated by the coordinate maximization algorithm 
        with the initial estimate $\hparam_{0}$ satisfying 
		\begin{align}\label{eq:coordinate_max_initialization}
			\lVert \hparam_{0} - \param^{*} \rVert \le 
			\frac{(C_{\ref{eq:expected_Hessian_eval_range}}-27\delta)  C_{\ref{eq:coordinate_max_initialization}}}{C_{\ref{eq:expected_Hessian_eval_range}}+\widetilde{C}_{\ref{eq:expected_Hessian_eval_range}}}\frac{\delta}{2},
		\end{align}
		where $C_{\ref{eq:coordinate_max_initialization}}:=(C_{\ref{eqn:pHatBounds}}-C_{\ref{eqn:pBounds}})\land (c_{\ref{eqn:pHatBounds}}-c_{\ref{eqn:pBounds}})>0$. 
		 Then with probability at least $1-3\eps$,  for all $k\ge 0$, 
		\begin{align}\label{eq:comp_stat_guarantee1}
			\lVert& \hparam_{\textup{MLE}} - \hparam_{k} \rVert^{2} \le \frac{\widetilde{C}_{\ref{eq:expected_Hessian_eval_range}} - 27\delta }{C_{\ref{eq:expected_Hessian_eval_range}}- 27\delta }  \left(1 - \frac{C_{\ref{eq:expected_Hessian_eval_range}}\delta^{-1} - 27}{\widetilde{C}_{\ref{eq:expected_Hessian_eval_range}}\delta^{-1} - 27} \right)^{k-1} \lVert \hparam_{\textup{MLE}}-\hparam_{0} \rVert^{2}.
		\end{align}
		In particular, 
\begin{align}\label{eq:comp_stat_guarantee2}
			\hspace{-0.4cm} \lVert \param^{*} - \hparam_{k} \rVert \le  \underbrace{  C_{\ref{eq:MLE_estimation_error}} \frac{|E|}{m} \log( \frac{|E|}{\eps}) }_{=\textup{statistical error}}  + \underbrace{ \sqrt{\frac{\widetilde{C}_{\ref{eq:expected_Hessian_eval_range}} - 27\delta }{C_{\ref{eq:expected_Hessian_eval_range}}- 27\delta } }  \left(1 - \frac{C_{\ref{eq:expected_Hessian_eval_range}}\delta^{-1} - 27}{\widetilde{C}_{\ref{eq:expected_Hessian_eval_range}}\delta^{-1} - 27} \right)^{(k-1)/2} \lVert \hparam_{\textup{MLE}}-\hparam_{0} \rVert }_{=\textup{computational error}}.
		\end{align}
	\end{theorem}

    It is important to note that the exponential rate of convergence of the coordinate maximization in Theorem \ref{thm:MLE_opt_alg}  is a universal constant that does not depend on the tree $T$ and also the 
    parameter $\delta$ (as long as it is less than some universal constant in Thm. \ref{thm:MLE_landscape_sample}). This means that the computational error for computing the MLE can be made to be less than a desired tolerance $\eps$ within $C\log \eps^{-1}$ iterations for some universal constant $C$. 
    This gives some theoretical support for the empirical fact that coordinate maximization algorithm performs well empirically, even with a small number of coordinate updates  \cite{guindon2003simple,guindon2010new}.  
    
    It would be of interest to show that the assumption that  $\|\hparam_0-\param^*\| = O(\delta)$ can be dropped; however, our proof needs the initial iterate to be sufficiently close to $\param^*$ in order to know that the empirical Hessian is smooth and strongly concave with high probability (Thm. \ref{thm:MLE_landscape_sample}) and that the subsequent iterates also lie in this ``good'' region.

    In Figure \ref{fig:opt_experiment}, we provide a numerical validation of Theorem \ref{thm:MLE_opt_alg} on a 20-node tree. The estimation error for the cyclic coordinate maximization \eqref{eq:alg_high_level} indeed decays exponentially toward a limiting value $\approx 10^{-3}$, which should correspond to the statistical error resulting from the discrepancy between the population and the empirical likelihood landscape.
    
    \begin{figure}
        \centering
        \includegraphics[width=.9\linewidth]{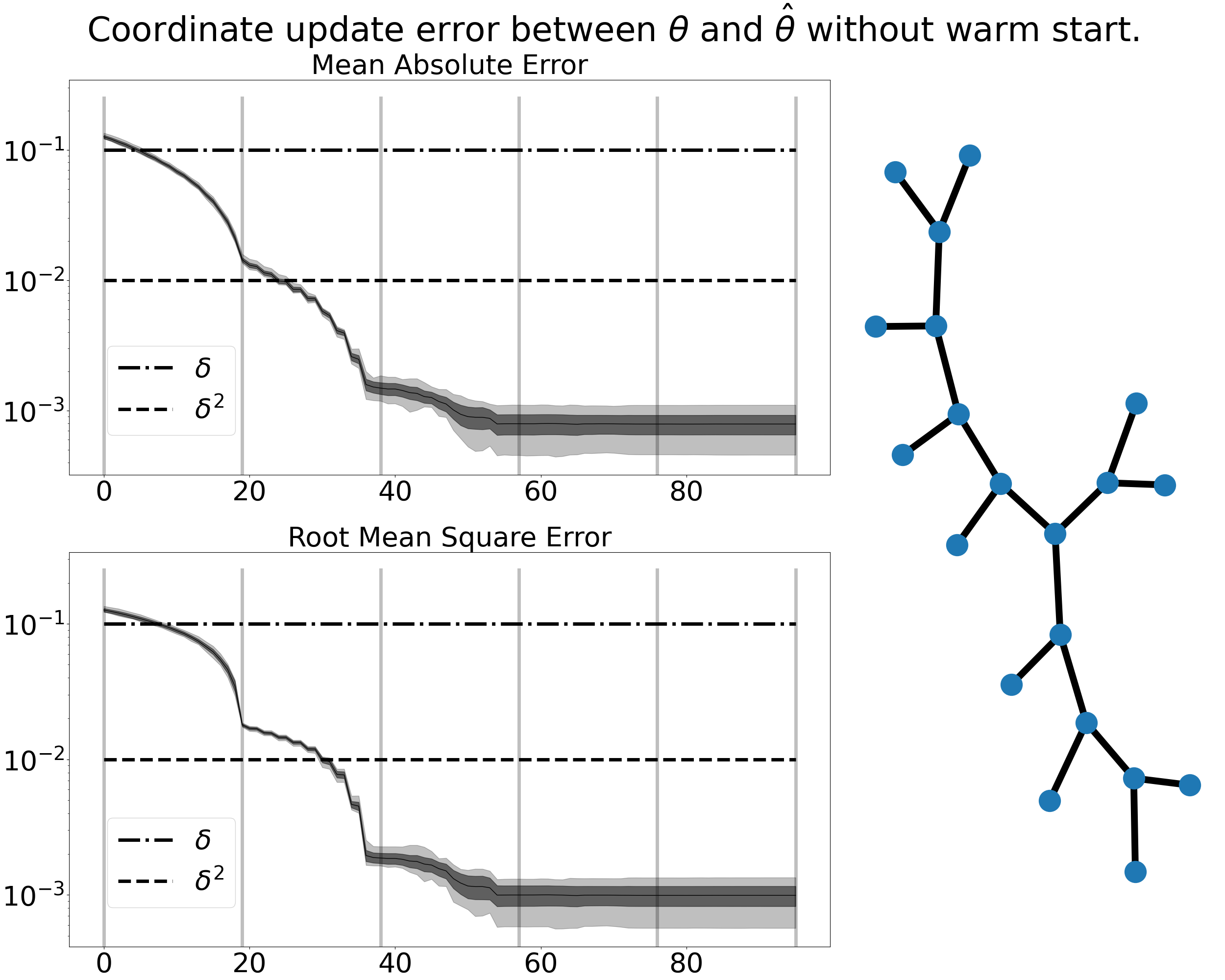}
        \vspace{0.2cm}
        \caption{Error between $\param^*$ and $\hparam$ computed via cyclic coordinate maximization. The underlying tree has 20 nodes and is depicted on the right. The algorithm is randomly initialized 25 times with random edge weights with $\param^*\sim \operatorname{Unif}( \Param_0(\delta))$ with $c_{\ref{eqn:pBounds}} = 1/4, C_{\ref{eqn:pBounds}}= 1/2$ and $\delta = 1/10$. Each tree had $m = 2\times 10^5$ spins at each leaf. Vertical gray lines are one complete round of coordinate updates. Solid black line is mean error over 25 samples, dark gray region is mean $\pm$ standard deviation, and light gray region is the range of observed errors. The parameter $\hparam$ is initialized with $\htheta_{0;e} = 0.8$ for all $e$.}
        \label{fig:opt_experiment}
    \end{figure}

    \subsection{Sketch of analysis for the population landscape}


    \subsubsection{Magnetization}
    
	We first discuss how to conveniently express the Hessian of the log-likelihood function \eqref{eq:def_log_likelihood} using an observable called the `magnetization'. 
	We begin with some important definitions.
	

	Fix two distinct nodes $u,v$ in $T$. We call a node $w$ a \textit{descendant} of $u$ with respect to node $v$ if the shortest path between $w$ and $v$ contains $u$. The \textit{descendant subtree at $u$} with respect to $v$ is the subtree $T_{u}$ rooted at $u$ consisting of all descendants of $u$ with respect to $v$. A subtree of $T$ rooted at $u$ is a \textit{descendant subtree of $u$} if it is a descendant subtree of $u$ with respect to some node $v$.

	The following notion of `magnetization' is central to the overall analysis in this work.  Roughly speaking, the magnetization  $Z_{u}$ of a node $u$ with respect to a descendant subtree $T_{u}$ rooted at $u$ is the `bias' on its spin after observing all spins at the leaves of the descendant subtree $T_{u}$. For instance, if all spins on the leaves of $T_{u}$ are $+$, then $u$ will be quite likely to have $+$ spin as well. The formal definition of magnetization is given below. 
	\begin{definition}[Magnetization]\label{def:magnetization}
		Let $T_{u}$ be a descendant 
		subtree of $T$ rooted at a node $u$. Let $L_{u}$ denote the set of all leaves in $T_{u}$. For a generic parameter $\hparam \in [-1,1]^{E(T_u)}$ and fixed spin configuration $\sigma_{L_u}\in \{\pm 1\}^{L_{u}}$
  on the leaves of $T_u$, define the magnetization at the root $u$ of $T_{u}$ under $\hparam$ as
		\begin{align}\label{eqn:def_magnetization}
   Z^{\hparam,T_u}_u (\sigma_{L_{u}}):=
			\PR_\hparam(\hat{\sigma}_u = +1\, |\,  \hat{\sigma}_{L_{u}} = \sigma_{L_{u}}) - \PR_\hparam(\hat{\sigma}_u=-1\, |\,  \hat{\sigma}_{L_{u}} = \sigma_{L_{u}}),
		\end{align}
		where $\hat{\sigma}$ is a random spin configuration on $T$  sampled from $\PR_{\hparam}$. 
		Furthermore, if $\sigma$ is a random spin configuration sampled from $\PR_{\param^{*}}$, we consider the random variable 
		\begin{align*}
  Z_{u}^{\hparam,T_{u}}:=Z_{u}^{\hparam, T_{u}}(\sigma_{L_{u}}). 
		\end{align*}
  We write this random variable simply as $Z_u$ when $\hparam,T_{u}$ are clear from the context. 
	\end{definition}
	
	If $T_{u}$ consists of a single node $u$, then $Z_{u}=X_{u}$ as we get to observe the spin at $u$. In general, $Z_{u}$ is a random variable determined by the spin configuration $X_{L_{u}}$ on the leaves of $T_{u}$ and takes values in $[-1,1]$. In fact, there is a recursive structure of magnetization, first established in Borgs, Chayes, Mossel, and Roch \cite{BCMR.06} which we now recall.

Suppose $T_{u}$ is a descendant subtree of a node $u$ and let $v,w$ be its two children in $T_{u}$. There are corresponding descendant subtrees rooted at these nodes with respect to $u$, which defines the magnetization at these nodes, say, $Z_{v}$ and $Z_{w}$.
Then \cite[Lemma 4 and 5]{BCMR.06} imply that, under $\PR_{\hparam}$,
\begin{equation}\label{eqn:recursionBorg}
	Z_u = \frac{\hat{\theta}_{v}Z_v + \hat{\theta}_w Z_{w}}{1+ \hat{\theta}_v\hat{\theta}_w Z_vZ_w}, 
\end{equation} 
where $\hat{\theta}_v = \hat{\theta}_{\{u,v\}}$ for the edge $\{u,v\}$ and similarly for $w$.
Throughout the paper, we reserve
\begin{align}\label{eq:q_def_recursion}
	q(x,y) := \frac{x+y}{1+xy},
\end{align}
so that \eqref{eqn:recursionBorg} reads as 
\begin{equation}\label{eqn:recursionBorg2}
	Z_u = q(\hat{\theta}_{v}Z_v, \hat{\theta}_w Z_{w}).
\end{equation}

	
	The magnetization $Z_{u}$ at a node $u$ depends implicitly on the choice of the descendant subtree $T_{u}$. As already observed in \cite{clancy2025likelihood}, magnetizations can be used to describe the gradient of the log-likelihood function in the mutation probabilities on the edges. To describe the Hessian, we introduce canonical decompositions of the tree $T$ with respect to either a single or two edges, which yields unambiguous choice of the descendant subtrees of all nodes. 
		
	\begin{figure}[h!]
		\centering
		\includegraphics[width=1\linewidth]{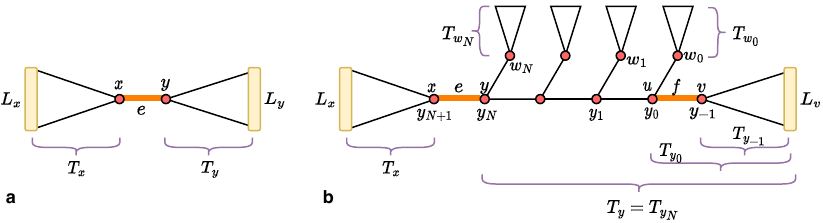}
		\caption{ Decomposition of the tree $T$ into subtrees with respect to  (\textbf{a}) a single edge $e=\{x,y\}$ and (\textbf{b}) two edges $e=\{y_{N}, y_{N+1}\}$ and $f=\{ y_{-1}, y_{0} \}$. For instance, the subtree $T_{u}$ in panel \textbf{a} is rooted at $u$ and $L_{u}$ denotes the set of all leaves in $T_{u}$; The subtree $T_{y_{N}}$ in panel \textbf{b} is rooted at $y_{N}$ and contains all nodes $y_{-1},\dots,y_{N}$ and $w_{0},\dots,w_{N}$ as well as the subtrees $T_{y_{-1}},T_{w_{0}},\dots,T_{w_{N}}$.} 
	\label{fig:TREEhess}
\end{figure}

Consider the log-likelihood function $\ell(\hparam; \sigma)$ in \eqref{eq:def_log_likelihood}
and two edges $e = \{x,y\}$ and $f=\{u,v\}\in E(T)$. Let $T_x$ and $T_y$ denote the subtrees rooted at $x$ and $y$ (resp.) obtained by removing $e$ from the edges of $T$ (see Figure \ref{fig:TREEhess}\textbf{a}). Suppose that $f\in E(T_y)$ and that $u$ is closer to $y$ than $v$, i.e., $d(u,y)< d(v,y)$. Enumerate the vertices on the path from $y$ to $u$ by $y = y_N, y_{N-1},\dots, y_1,y_0 = u$ and set $y_{-1} = v$ and $y_{N+1} = x$. Note that for each vertex $y_j$ with $j\in\{0,\dotsm, N\}$, the vertex $y_j$ has degree three and so has neighbors $\{y_{j-1},y_{j+1}, w_j\}$ for some other vertex $w_j$.
Accordingly, we have $T_{x}=T_{y_{N+1}}$ and $T_{y}=T_{y_{N}}$, and for every node $z$ in $T_{y}$, the descendant subtree $T_{z}$ is with respect to the root $x$. See Figure \ref{fig:TREEhess}\textbf{b} for illustration.

The following key lemma relates the derivatives of the log-likelihood and the magnetizations.
\begin{lemma}[Derivatives of the log-likelihood and magnetization]\label{lem:derivative}
	The following formulas hold.
	\begin{description}
		\item[(i)] (\textit{Gradient})     For edge $e = \{x,y\}$, we have 
		\begin{align}
			\frac{\partial}{\partial\htheta_e}\ell(\hparam;\sigma|_{L}) &= \frac{Z_x Z_y}{1+Z_x Z_y\htheta_e}, \label{eqn:derivative}
		\end{align}

		\item[(ii)](\textit{Hessian}) 
		For edges $e = \{x,y\}$ and $f = \{u,v\}$ with $\textup{dist}(e,f)=N$ as above, we have 
  \begin{align}\label{eqn:Z_derivative}
\frac{\partial}{\partial \htheta_f} Z_y =Z_v \prod_{j=1}^N \htheta_{\{y_j,y_{j-1}\}} \prod_{j=0}^N \frac{1-(\htheta_{\{y_j,w_j\}} Z_{w_j})^2}{\left(1+\htheta_{\{y_j,w_j\}}\htheta_{\{y_j,y_{j-1}\}} Z_{w_j}Z_{y_{j-1}}\right)^2}.
  \end{align}
  In particular,
		\begin{align}
			\frac{\partial^2}{\partial \htheta_e\partial \htheta_f} \ell(\hparam;\sigma|_{L})  &= \left( \frac{\htheta_e Z_x Z_v}{(1+\htheta_e Z_x Z_y)^2} \prod_{j=1}^N\htheta_{\{y_j,y_{j-1}\}} \right) \prod_{j=0}^{N} \frac{ \left(1- (\htheta_{\{y_j,w_j\}} Z_{w_j})^2 \right)}{\left(1+\htheta_{\{y_j,w_j\}} \htheta_{\{y_j,y_{j-1}\}} Z_{w_j} Z_{y_{j-1}}\right)^2}. \label{eqn:hessianTerms} 
		\end{align}

         \item[(iii)](\textit{Third-order derivatives})  If $\hparam\in \hParam_0(\delta)$, then for all edges $e_1,e_2,e_3$:
    \begin{align}\label{eqn:third_order_der_bd} 
        \left|\frac{\partial^3}{\partial\htheta_{e_1} \partial\htheta_{e_2} \partial \htheta_{e_3}} \ell(\hparam;\sigma)\right| \le     \frac{4\operatorname{diam}(T)}{(2c_{\ref{eqn:pHatBounds}}\delta)^{4\operatorname{diam}(T)+2}}. 
    \end{align}
	\end{description}
\end{lemma}

The expression for the Hessian in \eqref{eqn:hessianTerms} above is rather complicated. Looking at the denominators in \eqref{eqn:hessianTerms}, we see that each of them is (at worst) $\Omega(\delta^2)$, and as there are at most $\operatorname{diam}(T)$ many a na\"ive bound on the Hessian gives
\begin{align*}
   \left| \frac{\partial^2}{\partial\htheta_e\partial\htheta_f} \ell(\hparam;\sigma)\right| = O\left(\delta^{-2\operatorname{diam}(T)}\right);
\end{align*}
however, we can provide a much better bound. We state this as the following lemma.

\begin{lemma}\label{lem:uniformHessian}
    There exist constants $C_{\ref{eqn:uniformbound}}$, $\widetilde{C}_{\ref{eqn:uniformbound}}$ and $\delta_{\ref{eqn:uniformbound}}$ such that for all binary trees $T$ and $\delta\le \delta_{{\ref{eqn:uniformbound}}}$
    \begin{align}\label{eqn:uniformbound}
        \left| \frac{\partial^2}{\partial\htheta_e\partial\htheta_f} \ell(\hparam;\sigma)\right| \le C_{\ref{eqn:uniformbound}} \left(\frac{\widetilde{C}_{\ref{eqn:uniformbound}}}{\delta}\right)^{\operatorname{diam}(T)/2 + 4}.
    \end{align}
\end{lemma}

    The proofs of lemmas \ref{lem:derivative} and \ref{lem:uniformHessian} are relegated to Section \ref{sec:preliminary_lemmas}. 

    \subsubsection{Sketch of proof of Theorem \ref{thm:DiagDominant}}
    \label{sec:sketch_proof_offdiagonal}

    As we mentioned before, the proof of Theorem \ref{thm:DiagDominant} is the most challenging aspect of this work, and most of the difficulty lies in analyzing the off-diagonal entries of the Hessian of the log-likelihood function $\ell(\hparam;\sigma_{L})$ in \eqref{eq:def_log_likelihood}. 
    The off-diagonal entries are written as the product of strongly correlated random variables with large variances and the length of the product is proportional to the shortest-path distance between the two edges indexing the off-diagonal entry. Controlling such large product is the main challenge in the analysis. 

    To be more precise,  first observe that, under \eqref{eqn:pHatBounds} in Assumption \ref{assumption1}, the expression \eqref{eqn:hessianTerms} in Lemma \ref{lem:derivative} for the off-diagonal entries in the Hessian  yields the following upper bound 
\begin{align}
\label{eqn:hessianBound1}  \bigg|&\frac{\partial^2}{\partial\htheta_e\partial\htheta_f} \ell(\hparam;\sigma|_{L})\bigg|\le  \frac{(1-2c_{\ref{eqn:pHatBounds}}\delta)^{N}}{(1+\htheta_e Z_x Z_{y})^2} \prod_{j=0}^{N} \frac{1-(\htheta_{\{y_{j},w_j\}} Z_{w_j})^2}{(1+\htheta_{\{y_{j},w_j\}} Z_{w_j}\htheta_{\{y_{j},y_{j-1}\}} Z_{y_{j-1}})^2},
\end{align}
recalling that $N=\textup{dist}(e,f)$ denotes the shortest-path distance between the ends of the edges $e$ and $f$ (see Figure \ref{fig:TREEhess}\textbf{b}).
We next introduce some notation to further simplify the bound in \eqref{eqn:hessianBound1}. 
For each $0\le j \le N$, the internal node $y_{i}$ in the path $\gamma$ has a unique neighbor, say $w_{j}$, that is not in $\gamma$.
Each $w_{j}$ is associated with a descendant subtree $T_{w_{j}}$ (see Fig. \ref{fig:TREEhess}\textbf{b}), which defines the magnetization $Z_{w_{j}}$ at $w_{j}$. We introduce the following notation
\begin{align}\label{eq:def_eta_xi}
	\hspace{-0.5cm} \htheta_j:=	\htheta_{\{y_j,y_{j+1}\}}, \quad \eta_j := \htheta_{\{y_j, w_j\}}Z_{w_j},\quad 	\xi_{j+1} := 
	\htheta_j q(\eta_j,\xi_j), \quad \xi_{0} := \hat{\theta}_{-1}Z_{y_{-1}}
\quad  \eta_{N+1}:= Z_{x},
\end{align} 
where $q(\cdot,\cdot)$ is the bivariate function introduced in \eqref{eq:q_def_recursion}. We will call the random variables $\xi_{j}$ and $\eta_{j}$ as \textit{signals}. 
According to the decomposition in Figure \ref{fig:TREEhess}, the information (in terms of magnetization) in the subtree $T_{y}=T_{y_{N}}$ flows toward the root $y=y_{N}$ (see Figure \ref{fig:Tree4Terms}). Such information flow can be understood as follows. First, $y_{0}$  sends signal $\xi_{1}$ to $y_{1}$, which also receives signal $\eta_{1}$ from $w_{1}$. Then by the recursion for magnetization \eqref{eqn:recursionBorg2}, this yields magnetization $Z_{y_{1}}$ as 
\begin{align}\nonumber 
	Z_{y_{1}} = q(\xi_{1},\eta_{1}) = \frac{\xi_{1}+\eta_{1}}{1+\xi_{1}\eta_{1}}. 
\end{align}
Then $y_{1}$ sends signal $\xi_{2}=\hat{\theta}_{1}Z_{y_{1}}$ to $y_{2}$, which is then combined with the signal $\eta_{2}$ from $w_{2}$ by the same recursion, and so on. This recursively defines the signals $\eta_{1},\dots,\eta_{N+1}$ and $\xi_{1},\dots,\xi_{N+1}$ as in Figure \ref{fig:Tree4Terms}. 
\begin{figure}[b!]
	\centering
	\includegraphics[width=0.8 \linewidth]{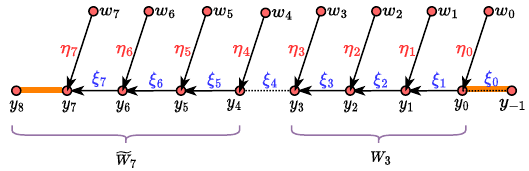}
	\caption{
    Propagation of signals from the edge $f=\{y_{0},y_{-1}\}$ to $e=\{y_{N},y_{N+1}\}$ when $N=7$. The signal $\xi_{j+1}$ that $y_{j+1}$ receives from $y_{j}$ is the scalar multiple $\hat{\theta}_{j}$ of the magnetization  $Z_{y_{j}}=q(\xi_{j},\eta_{j})$ at $y_{j}$. 
    }
	\label{fig:Tree4Terms}
\end{figure}

Using the notation introduced above and bounding $(1-2c_{\ref{eqn:pHatBounds}}\delta)^{N}\le 1$, we can rewrite the bound \eqref{eqn:hessianBound1} on the off-diagonal entries of the Hessian as 
\begin{align}\label{eq:Hessian_offdiag_bd} 
	\bigg| \frac{\partial^2}{\partial\htheta_e\partial\htheta_f} \ell(\hparam;\sigma|_{L})\bigg|
    \le   \frac{1}{(1+ \xi_{N+1} \eta_{N+1} )^2} \prod_{{j=0}}^{N} 
	\frac{1-\eta_{j}^2}{(1+\eta_{j} \xi_{j} )^2}.
\end{align}
Due to the recursion in  \eqref{eq:def_eta_xi} that $\xi_{j+1}$ satisfies, the $N$-fold product on the right-hand side above is in fact a function only of  $\xi_{1},\eta_{1},\dots,\eta_{N}$. Note that by Claim \ref{claim:unsigned_mag}, $\sigma_{w_1}\eta_{1},\dots,\sigma_{w_N}\eta_{N}$ are independent of each other and of the signals $\sigma_{y}$ for $y\in \gamma$. However, the terms in the product are far from being independent since the chain of recursions mixes up these independent variables. This is the key difficulty that we face in analyzing the off-diagonal entries in the Hessian. We will handle this issue by grouping four consecutive terms in the product, while regarding the first $\xi_{j}$ term in each subgroup as a non-random `adversarial input' $\xi_{j}^{\circ}$ for the four-step recursion. We now make this sketch more precise.

Note that, under \eqref{eqn:pHatBounds} in Assumption \ref{assumption1}, we have $\htheta_{\{a,b\}}\le 1-2c_{\ref{eqn:pHatBounds}}\delta$ for all edges $\{a,b\}$ and $Z_{s}\in[-1,1]$ for any vertex $s$, so we have the 
bounds 
\begin{align}\label{eqn:xiandetabound}
	|\xi_{j}|, |\eta_{j}|  \le 1-2c_{\ref{eqn:pHatBounds}}\delta.
\end{align}
Now consider the following product of four consecutive terms in the product in \eqref{eq:Hessian_offdiag_bd}:
\begin{align}
    \prod_{j=i-3}^{i} \frac{(1-\eta_j^2)}{(1+\xi_j\eta_j)^2} \qquad \textup{for $3\le i \le N$}. 
\end{align}
The above is a function of random variables $\xi_{i-3},\eta_{i-3},\eta_{i-2},\eta_{i-1},\eta_{i}$ since they determine the value of $\xi_{i-2},\xi_{i-1},\xi_{i}$ through  the recursion $\xi_{j+1} := 
	\htheta_j q(\eta_j,\xi_j)$. Since $|\xi_{i-3}|\le  1-2c_{\ref{eqn:pHatBounds}}\delta$, if we fix a deterministic value $x$ with absolute value $\le  1-2c_{\ref{eqn:pHatBounds}}\delta$ and define random variables $\xi_{i-2}^{\circ},\xi_{i-1}^{\circ},\xi_{i}^{\circ}$ using the recursion $\xi_{j+1}^{\circ} := 
	\htheta_j q(\eta_j,\xi_j^{\circ})$ for $j=i-3,i-2,i-1$ with $\xi_{i-3}^{\circ}=x$, then 
    \begin{align}\label{eq:def_W_i}
   W_{i}=W_{i}(\eta_{i-3},\eta_{i-2},\eta_{i-1},\eta_i) &: =   \sup_{|x|\le 1-2c_{\ref{eqn:pHatBounds}}\delta}  \prod_{j=i-3}^{i} \frac{(1-\eta_j^2)}{(1+\xi_j^{\circ}\eta_j)^2}\\
   &\ge  \prod_{j=i-3}^{i} \frac{(1-\eta_j^2)}{(1+\xi_j\eta_j)^2}  \quad \textup{for $3\le i \le N$},
    \end{align}
    where the inequality above holds almost surely with respect to the randomness of $\xi_{i-3},\eta_{i-3},\eta_{i-2},\eta_{i-1},\eta_{i}$. In the above we have defined random variables $W_{i}$ for $3\le i \le N$.  Similarly, define 
    \begin{align}
      \widetilde{W}_N&:=\widetilde{W}_i(\eta_{N-3},\eta_{N-2},\eta_{N-1},\eta_N,\eta_{N+1}) \\
      &=\sup_{|x| \le 1-2c_{\ref{eqn:pHatBounds}}\delta}\frac{1}{(1+\xi^\circ_{N+1}\eta_{N+1})^2}\prod_{j=N-3}^N\frac{(1-\eta_j^2)}{(1+\xi^\circ_j\eta_j)^2} ,\label{eq:def_widetildeW_i} \\
     &\qquad \textup{where $\xi_{j+1}^\circ := 
    \htheta_j q(\eta_j,\xi^\circ_j)$ for $j=N-3,N-2,N-1$ with $\xi^\circ_{N-3}=x$}. \nonumber 
    \end{align}
    Then the following inequality holds almost surely: 
    \begin{align*}
         \frac{1}{(1+\xi_{N+1}\eta_{N+1})^2}\prod_{j=N-3}^N\frac{(1-\eta_j^2)}{(1+\xi_j\eta_j)^2} \le \widetilde{W}_N. 
    \end{align*}
    Lastly, denote 
    \begin{align}
    R_r&:= R_r(\xi_0,
    \eta_0,\dotsm,\eta_{r-1}) =  \prod_{j=0}^{r-1} \frac{(1-\eta_j^2)}{(1+\xi_j\eta_j)^2} \quad  \textup{for } 0\le r\le 3, \label{eq:def_R_N}
    \end{align}
    where we set $R_0 \equiv 1$ using the convention of setting the empty product to one. Note that 
    in the definition of $R_{r}$, we have not replaced the random variable $\xi_{0}$ with a deterministic adversarial input as we did in the definition of $W_{i}$ and $\widetilde{W}_{i}$ above.

With \eqref{eq:Hessian_offdiag_bd} and the notations introduced above, we deduce the following lemma.

\begin{lemma}\label{lem:off_diagonal_ind_bd} For each edge $e\neq f$, and the notation in \eqref{eq:Hessian_offdiag_bd}--\eqref{eq:def_widetildeW_i}, with probability 1 it holds that
\begin{align}\label{eq:Hessian_offdiag_bd_bd}
	\bigg| \frac{\partial^2}{\partial\htheta_e\partial\htheta_f} \ell(\hparam;\sigma|_{L})\bigg| 
	\le	    \widetilde{W}_N W_{N-4}W_{N-8}\dotsm W_{r+3}R_r\qquad \textup{ where } r := (N+1)\mod 4. 
\end{align} 
\end{lemma} 
\noindent Note that $N\equiv r+3 \, (\textup{mod 4})$ and so each term in the product on the right-most term in \eqref{eq:Hessian_offdiag_bd} is accounted for in the bound exactly once. See the illustration in Figure \ref{fig:Tree4Terms}.

The significance of the bound in \eqref{eq:Hessian_offdiag_bd_bd} is that the constituent random variables are all independent from each other as stated in Lemma \ref{lem:independentWj}.  This is in contrast to the long-range correlations (through chains of recursions) between terms in the upper bound in \eqref{eq:Hessian_offdiag_bd}. Its proof can be found in Section \ref{sec:pf_off_diagonal}.

\begin{lemma}\label{lem:independentWj} 
Suppose that $N\ge 4$ and write $r = r_N =(N+1)\mod 4$.
    Then the random variables $\widetilde{W}_N, W_{N-4}, \dotsm, W_{r+3}$ and $R_r$ are independent of each other.
\end{lemma}

It is important to note that the random variables that appear in the expression on the right-hand side of \eqref{eq:Hessian_offdiag_bd_bd} above are independent by Lemma \ref{lem:independentWj} so that we can easily bound its moments. 
In other words, we have decoupled the long chain of dependence into independent blocks by ``adversarializing'' the input to each block of recursions. In the following sections, we will show that these variables have small expectation and finite variance. This will imply that their product has mean exponentially small in $N$ 
and finite variance. The former is enough to deduce the population level result (Theorem \ref{cor:MLE_landscape}), while the latter is used to deduce finite-sample results.

	\section{Preliminary Lemmas}
\label{sec:preliminary_lemmas}	
    In this section, we establish some preliminary lemmas on the magnetization and derivatives of the log-likelihood function.

    We first prove Lemma \ref{lem:derivative} on the derivatives of the log-likelihood function using magnetization.

	\begin{proof}[\textbf{Proof of Lemma \ref{lem:derivative}}]

		In fact, part \textbf{(i)} is already established in \cite{clancy2025likelihood}.
  
		Next we turn to \eqref{eqn:hessianTerms} in part \textbf{(ii)}, which follows from further differentiating the first derivative above and using the chain rule. 
		This time let us fix two edges in the tree $T$, $e = \{x,y\}$ and $f = \{u,v\}$. Without loss of generality, we suppose that the edge $f\in E(T_y)$, where $T_y$ is the subtree of $T\setminus\{e\}$ rooted at $y$, and that $u$ is the end of $f$ closer to $y$. Enumerate the vertices in the path from $y$ to $u$ by $y = y_N, y_{N-1},\dotsm, y_1, y_0 = u$ where $d(u,y) = N$. We write $y_{-1} = v$ and $y_{N+1} = x$. Note that for every $j = 0,1,\dots, N$, the vertex $y_j$ has neighbors $y_{j+1}, y_{j-1}$ and a third vertex which we denote by $w_j$. See Figure~\ref{fig:TREEhess}.
		
		For a vertex $a\in T_y$, we write $Z_a$ for the magnetization of $a$ w.r.t. the  descendant subtree of $T_y$ obtained by removing the edge $\{a,b\}$, where $b$ is the unique neighbor of $a$ which is closer to $y$. We write $Z_x$ for the root magnetization in the subtree $T_x$ of  $T\setminus\{e\}$ rooted at $x$. 
		We will use that, for every $j = 0,1,\dots, N$, 
		\begin{equation}\label{eqn:recursionZyj}
			Z_{y_j} = q\left(\htheta_{\{y_j,w_j\}} Z_{w_j} , \htheta_{\{y_j,y_{j-1}\}} Z_{y_{j-1}} \right),
		\end{equation}
		by~\eqref{eqn:recursionBorg2}.
		We note that 
		\begin{equation}\label{eqn:derivativeQ1}
			\frac{\partial}{\partial y}\frac{xy}{1+axy} = \frac{x}{(1+axy)^2}
		\end{equation}
		and 
		\begin{equation}\label{eqn:derivativeQ2}
			\frac{\partial}{\partial y}q(ax,by) = b\frac{1-(ax)^2}{(1+abxy)^2}.
		\end{equation}
		
		We use \eqref{eqn:derivative} (with $x,y$ rather $u,v$) and observe that $Z_x$ does not depend on $\htheta_f$
		to get
		\begin{align}\label{eq:Hessian_second_Der_pf1}
			\frac{\partial^2}{\partial \htheta_e \partial \htheta_f}\ell(\hparam; \sigma) &= \frac{\partial}{\partial\htheta_f} \frac{Z_xZ_y}{1+\htheta_e Z_xZ_y}  = \frac{ Z_x}{(1+\htheta_e Z_x Z_y)^2} \frac{\partial}{\partial\htheta_f} Z_y.
		\end{align} 
  Since $Z_{y} = Z_{y_N}$ we can use \eqref{eqn:recursionZyj} and the derivative \eqref{eqn:derivativeQ2} to get
  \begin{align*}
      \frac{\partial}{\partial \htheta_f}Z_{y_N} =  \htheta_{\{y_N,y_{N-1}\}} \frac{ \left(1-(\htheta_{\{y_N,w_N\}} Z_{w_N})^2\right)}{\left(1+\htheta_{\{y_N,w_N\}}\htheta_{\{y_N,y_{N-1}\}} Z_{w_N}Z_{y_{N-1}}\right)^2} 
			\frac{\partial}{\partial \htheta_f} Z_{y_{N-1}}.
  \end{align*}
		Continuing by induction, this is
		\begin{align}\nonumber
			\nonumber&=  \frac{\htheta_{\{y_N,y_{N-1}\}} \left(1-(\htheta_{\{y_N,w_N\}} Z_{w_N})^2\right)}{\left(1+\htheta_{\{y_N,w_N\}}\htheta_{\{y_N,y_{N-1}\}} Z_{w_N}Z_{y_{N-1}}\right)^2}
			\frac{\partial}{\partial \htheta_f} q\left(\htheta_{\{y_{N-1},w_{N-1}\}} Z_{w_{N-1}}, \htheta_{\{y_{N-1},y_{N-2}\}} Z_{y_{N-2}}\right)\\
			\nonumber&=\cdots\\
			\nonumber&=  \prod_{j=1}^{N} \frac{\htheta_{\{y_j,y_{j-1}\}} \left(1- (\htheta_{\{y_j,w_j\}} Z_{w_j})^2 \right)}{\left(1+\htheta_{\{y_j,w_j\}} \htheta_{\{y_j,y_{j-1}\}} Z_{w_j} Z_{y_{j-1}}\right)^2} \frac{\partial}{\partial \htheta_f} Z_u\\
			\nonumber &= Z_v\prod_{j=1}^N \htheta_{\{y_j,y_{j-1}\}} \prod_{j=0}^{N} \frac{ \left(1- (\htheta_{\{y_j,w_j\}} Z_{w_j})^2 \right)}{\left(1+\htheta_{\{y_j,w_j\}} \htheta_{\{y_j,y_{j-1}\}} Z_{w_j} Z_{y_{j-1}}\right)^2}.
		\end{align}
		The last equality used $u = y_0, v = y_{-1}$ and 
		\begin{align*}
			\frac{\partial}{\partial \htheta_f} Z_u &= \frac{\partial}{\partial \htheta_f} q( \htheta_{\{y_0,w_0\}}Z_{w_0}, Z_v \htheta_f)= Z_v\frac{1-(\htheta_{\{y_0,w_0\}} Z_{w_0})^2}{(1+\htheta_{\{y_0,w_0\}}\htheta_{\{y_{0},y_{-1}\}} Z_{w_0}Z_{y_{-1}})^2},
		\end{align*}
		and the fact that 
		$Z_v$ does not depend on 
		$\htheta_f$. This shows the formula \eqref{eqn:Z_derivative} for $\frac{\partial}{\partial \htheta_f} Z_{y}$. Then by using \eqref{eq:Hessian_second_Der_pf1}, we obtain \eqref{eqn:hessianTerms}, as desired.

        Now we show \textbf{(iii)}. The claimed bound \eqref{eqn:third_order_der_bd} trivially holds for all $e_1=e_2=e_3$, and using Lemma \ref{lem:derivative}\textbf{(i,ii)} it is also easily checked (using the symmetry of mixed partial derivatives) if $e_i = e_j = e$ and $e_k = f$ for distinct $i,j,k$ since
\begin{align*}
\frac{\partial^3}{\partial\htheta_f\partial \htheta_e^2 } \ell(\hparam;\sigma) &= \frac{\partial^2}{\partial\htheta_f\partial \htheta_e} \frac{Z_xZ_y}{(1+\htheta_e Z_xZ_y)^2} = \frac{\partial}{\partial \htheta_f} \frac{-2Z_x^2Z_y^2}{(1+\htheta_e Z_xZ_y)^3}\\
&=\frac{-2Z_x^2 Z_y(2-\htheta_eZ_xZ_y)}{(1+\htheta_e Z_xZ_y)^4} \frac{\partial}{\partial \htheta_f} Z_y.
\end{align*}

Observe that in \eqref{eqn:Z_derivative}, since each of the denominators is at least $(2c_{\ref{eqn:pHatBounds}}\delta)^2$, we get 
        \begin{align}
         \left|\frac{\partial}{\partial \htheta_f}Z_y\right| \le \frac{1}{(2c_{\ref{eqn:pHatBounds}}\delta)^{2 d}},
     \end{align}
    where $d $ denotes the maximum distance from $y$ to a leaf in $T_y$. Hence we have  
\begin{equation*}
    \left|\frac{\partial^3}{\partial\htheta_f\partial \htheta_e^2 } \ell(\hparam;\sigma)\right| \le \frac{6}{(2c_{\ref{eqn:pHatBounds}}\delta)^4} \frac{1}{(2c_{\ref{eqn:pHatBounds}}\delta)^{2d}} = 6 (2c_{\ref{eqn:pHatBounds}} \delta)^{-2\operatorname{diam}(T)-4}.
\end{equation*}
Therefore, we just check when $e_1,e_2,e_3$ are distinct. There are two cases to consider (again using symmetry of mixed partials). Say $e_3 = \{x,y\}$. Either $e_1\in T_x$ and $e_2\in T_y$ or both $e_1,e_2\in T_y$.  

In the case of the former, we see
\begin{align*}
    \frac{\partial^3}{\partial\htheta_{e_1}\partial \htheta_{e_2}\partial \htheta_{e_3}} \ell(\hparam;\sigma) = \frac{\partial^2}{\partial\htheta_{e_1}\partial \htheta_{e_2}} \frac{Z_xZ_y}{1+\htheta_{e_3} Z_xZ_y} = \frac{1}{(1+\htheta_{e_3}Z_xZ_y)^2} \left( Z_x \frac{\partial}{\partial \htheta_{e_1}} Z_x + Z_y \frac{\partial}{\partial \htheta_{e_2}} Z_y\right)
\end{align*}
and, since the sum of distance from $x$ to any leaf in $T_x$ and the distance from $y$ to any leaf in $T_y$ is at most the diameter,
\begin{align*}
     \left|\frac{\partial^3}{\partial\htheta_{e_1}\partial \htheta_{e_2}\partial \htheta_{e_3}} \ell(\hparam;\sigma) \right| = \frac{1}{(2c_{\ref{eqn:pHatBounds}}\delta)^2} \frac{2}{(2c_{\ref{eqn:pHatBounds}} \delta)^{\operatorname{diam}(T)}}. 
\end{align*}

The latter case is a bit harder. In this case we will assume that $e_3 = e = \{x,y\}$ and $e_2 = f = \{u,v\}$ as in Lemma \ref{lem:derivative}. Thus
\begin{align*}
&     \frac{\partial^3}{\partial\htheta_{e_1}\partial \htheta_{e_2}\partial \htheta_{e_3}} \ell(\hparam;\sigma)  = \frac{\partial}{\partial \htheta_{e_1}} \frac{ Z_x Z_v}{(1+\htheta_e Z_x Z_y)^2} \prod_{j=1}^N \htheta_{\{y_j,y_{j-1}\}} \prod_{j=0}^{N} \frac{ \left(1- (\htheta_{\{y_j,w_j\}} Z_{w_j})^2 \right)}{\left(1+\htheta_{\{y_j,w_j\}} \htheta_{\{y_j,y_{j-1}\}} Z_{w_j} Z_{y_{j-1}}\right)^2}
\end{align*}
Note, we can suppose that $e_1 \neq \{y_j,y_{j-1}\}$ for any of the edges on the path from $e$ to $f$ as well as also that $e_1\notin T_v$ as these are covered by the previous case. Therefore either $e_1 = \{y_k,w_k\}$ for some $k$ or $e_1\in T_{w_k}$ for some $k$. If it is an edge $e_1 = \{y_k,w_k\}$ then $Z_{y_j}$ depends on $\htheta_{e_1}$ for $j\ge k$ but none of the other magnetizations appearing in the right-hand side of the above equation; while if $e_1\in T_{w_k}$ then there is the additional dependence on $Z_{w_k}$. Note that for any $N\ge j>k$ and $e_1 = \{y_k,w_k\}$ it holds that
\begin{align*}
    \frac{\partial}{\partial\htheta_{e_1}}  &\frac{1-(\htheta_{\{y_j,w_j\}} Z_{w_j})^2}{(1+\htheta_{\{y_j,w_j\}} \htheta_{\{y_j,y_{j-1}\}} Z_{w_j} Z_{y_{j-1}})^2} \\
    &= \frac{2\left(1-(\htheta_{\{y_j,w_j\}} Z_{w_j})^2\right) \htheta_{\{y_j,w_j\}} \htheta_{\{y_j,y_{j-1}\}} Z_{w_j} \left(1-(\htheta_{\{y_j,w_j\}} \htheta_{\{y_j,y_{j-1}\}} Z_{w_j})^2\right)}{(1+\htheta_{\{y_j,w_j\}} \htheta_{\{y_j,y_{j-1}\}} Z_{w_j} Z_{y_{j-1}})^3} \frac{\partial}{\partial \htheta_{e_1}} Z_{y_j}\\
    &= \frac{1-(\htheta_{\{y_j,w_j\}} Z_{w_j})^2}{(1+\htheta_{\{y_j,w_j\}} \htheta_{\{y_j,y_{j-1}\}} Z_{w_j} Z_{y_{j-1}})^2} E_1\qquad\textup{(say)}
\end{align*} as well as
\begin{align*}
    \frac{\partial}{\partial\htheta_{e_1}}  &\frac{1-(\htheta_{\{y_k,w_k\}} Z_{w_k})^2}{(1+\htheta_{\{y_k,w_k\}} \htheta_{\{y_k,y_{k-1}\}} Z_{w_k} Z_{y_{k-1}})^2} =-\frac{2Z_{w_k} (\htheta_{\{y_{k},w_k\}} Z_{w_k} + \htheta_{\{y_k,y_{k-1}\}}Z_{y_{k-1}})}{(1+\htheta_{\{y_k,w_k\}} \htheta_{\{y_k,y_{k-1}\}} Z_{w_k} Z_{y_{k-1}})^3}
    \\&= \frac{1-(\htheta_{\{y_k,w_k\}} Z_{w_k})^2}{(1+\htheta_{\{y_k,w_k\}} \htheta_{\{y_k,y_{k-1}\}} Z_{w_k} Z_{y_{k-1}})^2} E_2
    \\
    \frac{\partial}{\partial\htheta_{e_1}}  & \frac{Z_xZ_v}{(1+\htheta_e Z_xZ_y)^2} = \frac{-2Z_x^2 Z_vZ_y}{(1+\htheta_e Z_xZ_y)^3} \frac{\partial}{\partial_{e_1}}Z_y =  \frac{Z_xZ_v}{(1+\htheta_e Z_xZ_y)^2} E_3 \textup{ (say).}
\end{align*} Here $E_1,E_2, E_3$ are (signed) errors satisfying
\begin{align*}
    \left|E_1\right|&\le \frac{2}{(2c_{\ref{eqn:pHatBounds}}\delta)} \times\frac{1}{(2c_{\ref{eqn:pHatBounds}}\delta)^{2\operatorname{diam}(T)}}\\
    \left|E_2\right|&\le \frac{4}{(2c_{\ref{eqn:pHatBounds}}\delta)^2}\\
    \left|E_3\right|&\le \frac{2}{(2c_{\ref{eqn:pHatBounds}}\delta)}\times \frac{1}{(2c_{\ref{eqn:pHatBounds}}\delta)^{2\operatorname{diam}(T)}}.
\end{align*}
Therefore by the product rule when $e_1 = \{y_k,w_k\}$ then
\begin{align*}
    &\left|\frac{\partial^3}{\partial\htheta_{e_1}\partial \htheta_{e_2}\partial \htheta_{e_3}} \ell(\hparam;\sigma)\right| \\
    &\qquad = \left|\frac{\partial}{\partial \htheta_{e_1}} \frac{ Z_x Z_v}{(1+\htheta_e Z_x Z_y)^2} \prod_{j=1}^N \htheta_{\{y_j,y_{j-1}\}} \prod_{j=0}^{N} \frac{ \left(1- (\htheta_{\{y_j,w_j\}} Z_{w_j})^2 \right)}{\left(1+\htheta_{\{y_j,w_j\}} \htheta_{\{y_j,y_{j-1}\}} Z_{w_j} Z_{y_{j-1}}\right)^2}\right|\\
    &\qquad =\left|\frac{ Z_x Z_v}{(1+\htheta_e Z_x Z_y)^2} \prod_{j=1}^N \htheta_{\{y_j,y_{j-1}\}} \prod_{j=0}^{N} \frac{ \left(1- (\htheta_{\{y_j,w_j\}} Z_{w_j})^2 \right)}{\left(1+\htheta_{\{y_j,w_j\}} \htheta_{\{y_j,y_{j-1}\}} Z_{w_j} Z_{y_{j-1}}\right)^2}  \right|\\
    &\qquad \qquad \times \left(\operatorname{diam}(T) \max(|E_1|,|E_2|,|E_3|)\right)\\
    &\qquad \le \frac{4\operatorname{diam}(T)}{(2c_{\ref{eqn:pHatBounds}}\delta)^{4\operatorname{diam}(T)+1}} 
\end{align*} where the exponent in the denominator is $2\operatorname{diam}(T) + (2\operatorname{diam}(T) + 1)$ which is the worst-case bound for each of the denominators in the Hessian and maximum bound from $E_3$, respectively. 

The bound for whenever $e_1\in T_{w_k}$ is similar, except the error for the corresponding ``$E_2$ term'' is the same as the bound for $E_3$ above. We omit the details.
\end{proof}

Next, we prove the uniform bound \eqref{eqn:uniformbound} on the entries of the Hessian stated in Lemma \ref{lem:uniformHessian}. 

\begin{proof}[\textbf{Proof of Lemma \ref{lem:uniformHessian}}]
        We start with the simple observations that from \eqref{eqn:hessianTerms} and the fact that $\hparam\in [0,1]^E$, and $1+\hparam Z_xZ_y\ge 2c_{\ref{eqn:pHatBounds}}\delta$ that
\begin{align*}
			&\left| \frac{\partial^2}{\partial \htheta_e\partial \htheta_f} \ell(\hparam;\sigma)\right| \le  \left( \frac{\htheta_e Z_x Z_v}{(1+\htheta_e Z_x Z_y)^2} \prod_{j=1}^N\htheta_{\{y_j,y_{j-1}\}} \right)
            \prod_{j=0}^{N} \frac{ \left(1- (\htheta_{\{y_j,w_j\}} Z_{w_j})^2 \right)}{\left(1+\htheta_{\{y_j,w_j\}} \htheta_{\{y_j,y_{j-1}\}} Z_{w_j} Z_{y_{j-1}}\right)^2}\\
            &\le \frac{1}{(2c_{\ref{eqn:pHatBounds}}\delta)^2}\prod_{j=0}^{N} \frac{ \left(1- (\htheta_{\{y_j,w_j\}} Z_{w_j})^2 \right)}{\left(1+\htheta_{\{y_j,w_j\}} \htheta_{\{y_j,y_{j-1}\}} Z_{w_j} Z_{y_{j-1}}\right)^2}.
\end{align*}
By Proposition \ref{prop:twoTerms}, there is some constant $C>0$ (depending only the constants in \ref{assumption1}) such that
\begin{align*}
    \frac{ \left(1- (\htheta_{\{y_j,w_j\}} Z_{w_j})^2 \right)}{\left(1+\htheta_{\{y_j,w_j\}} \htheta_{\{y_j,y_{j-1}\}} Z_{w_j} Z_{y_{j-1}}\right)^2} \frac{ \left(1- (\htheta_{\{y_{j+1},w_{j+1}\}} Z_{w_{j+1}})^2 \right)}{\left(1+\htheta_{\{y_{j+1},w_{j+1}\}} \htheta_{\{y_{j+1},y_{j}\}} Z_{w_{j+1}} Z_{y_{j}}\right)^2}\le \frac{C}{\delta}
\end{align*}
for all $j = 0,1,2,\dotsm, N-1$. 
Note that for each $j$ we have $ \left(1+\htheta_{\{y_j,w_j\}} \htheta_{\{y_j,y_{j-1}\}} Z_{w_j} Z_{y_{j-1}}\right)\ge 2c_{\ref{eqn:pHatBounds}}\delta$ since $\htheta_e\le 1-2c_{\ref{eqn:pHatBounds}}\delta$ and $Z_v\in[-1,1]$ for all $v$. It follows that
\begin{align}
    \left|\prod_{j=0}^{N} \frac{ \left(1- (\htheta_{\{y_j,w_j\}} Z_{w_j})^2 \right)}{\left(1+\htheta_{\{y_j,w_j\}} \htheta_{\{y_j,y_{j-1}\}} Z_{w_j} Z_{y_{j-1}}\right)^2}\right| &\le (C/\delta)^{N/2} \textup{ when }N \textup{ is odd},\\
    \left|\prod_{j=0}^{N} \frac{ \left(1- (\htheta_{\{y_j,w_j\}} Z_{w_j})^2 \right)}{\left(1+\htheta_{\{y_j,w_j\}} \htheta_{\{y_j,y_{j-1}\}} Z_{w_j} Z_{y_{j-1}}\right)^2}\right| &\le (C/\delta)^{ N-1/2} \frac{1}{(2c_{\ref{eqn:pHatBounds}}\delta)^2} \textup{ when }N \textup{ is even}.
\end{align}
The desired statement follows from $N\le \operatorname{diam}(T)$.
\end{proof}

	
 Next, we recall a key result from Clancy, Lyu, Roch, and Sly \cite[Thm. 2.3]{clancy2025likelihood} about magnetization is the following insensitivity to parameters. 

\begin{theorem}[Insensitivity of magnetization to parameters]\label{thm:Robust} There exist constants $\delta_{\ref{eqn:antiReconstruction}}, 
c_{\ref{eqn:Reconstruct}},C_{\ref{eqn:Reconstruct}}, c_{\ref{eqn:antiReconstruction}}, C_{\ref{eqn:antiReconstruction}}>0$ depending only on the constants in \ref{assumption1} such that the following holds for any unrooted binary tree $T$ and $\delta\le \delta_{\ref{eqn:antiReconstruction}}$. 
	Fix a descendant subtree $T_{u}$ of a node $u$, and let $L_{u}$ denote the set of all leaves in $T_{u}$ and suppose that $\hparam\in \hParam_0(\delta)$ and $\param^*\in \Param_0(\delta)$. 
    \vspace{0.1cm}
	\begin{description}
		\item[(i)] (\textit{Upper tail}) 
        \vspace{-0.3cm} 
		\begin{equation}\label{eqn:Reconstruct}
			\PR_{\param^{*}}\left(\sigma_u Z_u^{\hparam, T_{u}}(\sigma_{L_{u}}) 
   \ge 1-C_{\ref{eqn:Reconstruct}} \delta^2  \right) \ge 1- c_{\ref{eqn:Reconstruct}}\delta. 
		\end{equation} 
        \vspace{0.1cm}

		\item[(ii)] (\textit{Lower tail}) 
		\vspace{-0.3cm}
    \begin{equation}\label{eqn:antiReconstruction}
			\PR_{\param^{*}} \left(\sigma_u Z_u^{\hparam, T_{u}}(\sigma_{L_{u}}) 
   \le - c_{\ref{eqn:antiReconstruction}}  \right) \le C_{\ref{eqn:antiReconstruction}}\delta^2.
		\end{equation}
	\end{description}
\end{theorem}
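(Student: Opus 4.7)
The plan is to prove Theorem~\ref{thm:Robust} by induction on the depth of the descendant subtree $T_u$, with trivial base case when $u$ is a leaf, where $Z_u = \sigma_u$. The central tools are the recursion \eqref{eqn:recursionBorg2}, $Z_u = q(\hat{\theta}_v Z_v, \hat{\theta}_w Z_w)$ with $v, w$ the children of $u$ in $T_u$, together with the identities
\begin{align*}
1 - q(a,b) = \frac{(1-a)(1-b)}{1 + ab}, \qquad 1 + q(a,b) = \frac{(1+a)(1+b)}{1 + ab}.
\end{align*}
Their multiplicative form is what enables $O(\delta)$-errors at the two children to compound into the $O(\delta^2)$-error claimed at the parent. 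By the $\pm 1$-symmetry of the CFN model with uniform stationary distribution, I condition on $\sigma_u = +1$ throughout. Given $\sigma_u$, the pairs $(\sigma_v, Z_v)$ and $(\sigma_w, Z_w)$ are conditionally independent, and $\sigma_v = +1$ with probability $1 - p^*_v = 1 - \Theta(\delta)$, and similarly for $\sigma_w$.

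For the upper tail \eqref{eqn:Reconstruct}, I would focus on the event that neither edge at $u$ mutates and both children satisfy the inductive hypothesis: $\sigma_v Z_v \geq 1 - C\delta^2$ and $\sigma_w Z_w \geq 1 - C\delta^2$. On this event, for each $y \in \{v, w\}$ we have $1 - \hat{\theta}_y Z_y = 2\hat{p}_y + \hat{\theta}_y (1 - Z_y) = O(\delta)$, and $\hat{\theta}_v Z_v \hat{\theta}_w Z_w \geq 0$, so the identity for $1 - q$ gives
\begin{align*}
1 - Z_u \leq (1 - \hat{\theta}_v Z_v)(1 - \hat{\theta}_w Z_w) = O(\delta^2),
\end{align*}
yielding the good event at $u$ provided $C$ is chosen large enough in terms of $C_{\ref{eqn:pHatBounds}}$.

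For the lower tail \eqref{eqn:antiReconstruction}, I would observe, via the dual identity for $1 + q$, that $Z_u \leq -c_{\ref{eqn:antiReconstruction}}$ forces both $\hat{\theta}_v Z_v$ and $\hat{\theta}_w Z_w$ to be substantially negative. Under \ref{assumption1} and the inductive hypothesis, the dominant route is ``both edges at $u$ mutate'' ($\sigma_v = \sigma_w = -1$), which has probability $p^*_v p^*_w = O(\delta^2)$. All remaining routes require a severe failure at one or more children, and are inductively of order $O(\delta^4)$ or smaller, hence absorbed into the constant.

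The main obstacle is closing the induction for the upper tail with constants independent of tree size. A naive union bound on the good event above yields
\begin{align*}
\PR_{\param^*}(\text{failure at } u) \leq p^*_v + p^*_w + 2c\delta,
\end{align*}
which cannot be absorbed back into $c\delta$, since the flips at $u$ contribute an extra $\Theta(\delta)$ at every level. Overcoming this requires a finer argument that avoids iterating an additive loss: for instance, by exploiting that a moderate failure at one child is partially offset by a successful reconstruction at the other, so that not every child failure propagates fully to $u$, or alternatively by strengthening the inductive hypothesis to a contracting functional of $Z_u$ under the recursion. Carrying out this refined bookkeeping is the technical heart of the argument and is done in detail in~\cite{clancy2025likelihood}.
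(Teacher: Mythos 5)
The first thing to note is that the paper does not prove Theorem \ref{thm:Robust} at all: it is imported verbatim from the companion work \cite[Thm.~2.3]{clancy2025likelihood} and is used here only as an input (together with Claim \ref{claim:unsigned_mag}) to the Hessian analysis. Since your proposal likewise defers the decisive step to \cite{clancy2025likelihood}, your treatment is consistent with the paper's; there is no in-paper proof to compare against in detail, and your sketch of the $q$-recursion, the product identities for $1\mp q$, and the conditional independence of the two children's contributions is the right set of ingredients.

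Judged as a standalone argument, however, the sketch has exactly the gap you flag, and it is the crux rather than a technicality: a one-level inductive hypothesis of the form $\PR_{\param^*}(\sigma_u Z_u < 1-C\delta^2)\le c\delta$ cannot close, because the flips on the two edges at $u$ inject a fresh $\Theta(\delta)$ at every level. The mechanism that rescues the statement is that deep corruption is \emph{repaired} within a bounded number of levels: if one incoming signal is merely bounded below by $-\kappa$ while the sibling is good, the identity $1-q(a,b)=(1-a)(1-b)/(1+ab)$ gives $\sigma Z\ge 1-O(\delta)$ one level up and then $\ge 1-O(\delta^2)$ after a further level of two strong signals (cf.\ Claim \ref{claim:quickBound1}), so only flips or failures within the top few levels, an $O(\delta)$ event in total, can prevent good reconstruction at the root. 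Capturing this requires a multi-tier inductive hypothesis of the kind summarized in \eqref{eq:recons_tiers}, not a single threshold; that strengthened bookkeeping is what \cite{clancy2025likelihood} carries out. Separately, your lower-tail accounting is off: a flip on one edge at $u$ combined with a \emph{moderate} failure at the other child, or two moderate failures, are $O(\delta^2)$ routes to $\sigma_u Z_u\le-c_{\ref{eqn:antiReconstruction}}$ that involve no severe failure, while the single-flip-with-good-reconstruction route is excluded only because $c_{\ref{eqn:antiReconstruction}}$ is chosen close enough to $1$ that bounds of the type in Claim \ref{claim:strongopposites.} keep $q$ above $-c_{\ref{eqn:antiReconstruction}}$. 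Neither point threatens the $O(\delta^2)$ bound, but they belong in the case analysis if the proof is to be written out rather than cited.
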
 

	To conceptualize Thm. \ref{thm:Robust}, we introduce the following trichotomy of magnetization. We say we have a `good reconstruction' at node $u$ if $\sigma_{u}Z_{u} \ge 1-C_{\ref{eqn:Reconstruct}} \delta^2$, `severe failure' if $\sigma_{u}Z_{u}\le  -c_{\ref{eqn:antiReconstruction}}$, and `moderate failure' otherwise. Then Thm. \ref{thm:Robust} states the following probability bounds for each of the three tiers of magnetization.   
	\begin{align}\label{eq:recons_tiers}
		\begin{cases}
			\textup{Good reconstruction at $u$} \quad \Longleftrightarrow \quad 	\sigma_{u}Z_{u} \ge 1-C_{\ref{eqn:Reconstruct}} \delta^2 &\textup{(with prob. $\ge 1- c_{\ref{eqn:Reconstruct}}\delta$)} \\
			\textup{Moderate failure at $u$}  \quad \Longleftrightarrow \quad 	\sigma_{u}Z_{u}\in ( -c_{\ref{eqn:antiReconstruction}}, 1-4C_{\ref{eqn:Reconstruct}}\delta^2) &\textup{(with prob. $\le c_{\ref{eqn:Reconstruct}}\delta$)} \\
			\textup{Severe failure at $u$}  \quad \Longleftrightarrow \quad 	\sigma_{u}Z_{u}\le  -c_{\ref{eqn:antiReconstruction}} 
			&\textup{(with prob. $\le C_{\ref{eqn:antiReconstruction}}\delta^{2}$)}
		\end{cases}
	\end{align}
    This result is a crucial ingredient for our analysis in Sections \ref{sec:diagonal_Hessian_pf} and \ref{sec:Hessian_off_diag_pf}, and the supplement to \cite{CLR:25}.

 An important property of magnetization at distinct nodes is that they are conditionally independent given the spins at intermediate. More precisely, suppose we have two node-disjoint descendant subtrees $T_{u}$  and $T_{v}$. Then for any node $w$ along the shortest path between $u$ and $v$, 
		\begin{align}\label{eq:Z_conditional_independence}
			Z_{u} \coprod   Z_{v} \,|\, \sigma_{w}. 
		\end{align}
		which follows from the Markov property of the CFN model and the fact that $Z_{u}$ is determined by $\sigma_{L_{u}}$. In fact, the 
        `unsigned magnetizations' $\sigma_{u}	Z_{u}$ are independent as long as the supporting descendant subtrees are node-disjoint, as stated in Claim 3.1 from \cite{clancy2025likelihood} which we now recall.

	\begin{claim}[Independence of unsigned magnetization]\label{claim:unsigned_mag}
		The following hold: 
		\begin{description}[leftmargin=0.8cm] 
			\item[(i)] (\textit{Independence from root spin}) Let $u$ be a node in $T$ with a descendant subtree $T_{u}$ and corresponding magnetization $Z_{u}$. Then the  `unsigned magnetization' $\sigma_{u}Z_{u}$ is independent from $\sigma_{u}$ under $\PR_{\param^{*}}$. 

            \vspace{0.1cm}
			\item[(ii)] (\textit{Independence between unsigned magnetizations})  Let $v_{1},\dots,v_{k}$ be nodes in $T$ and suppose there are corresponding descendant subtrees $T_{v_{1}},\dots,T_{v_{k}}$ that are node-disjoint. Let $Z_{v_{i}}$ denote the corresponding magnetization at $v_{i}$ for $i=1,\dots, k$.
			Then  $\sigma_{v_{i}} Z_{v_{i}}$s for $i=1,\dots,k$ are independent under $\PR_{\param^{*}}$. 
		\end{description}
        
	\end{claim}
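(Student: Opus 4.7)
My plan is to derive both parts from a single observation: the CFN model is symmetric under the global spin flip $\sigma \mapsto -\sigma$, since the stationary distribution is uniform and each transition matrix has equal diagonal entries and equal off-diagonal entries. Consequently $(\sigma_v)_{v \in V}$ and $(-\sigma_v)_{v \in V}$ have the same joint distribution under both $\PR_{\param^{*}}$ and $\PR_{\hparam}$.

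For part (i), I would first observe that the deterministic map $\sigma_{L_u} \mapsto Z_u^{\hparam, T_u}(\sigma_{L_u})$ is odd, by applying the flip symmetry under $\PR_{\hparam}$ directly to the definition \eqref{eqn:def_magnetization}: the numerators and denominators in the posterior probabilities swap when all leaf spins are negated, giving $Z_u^{\hparam, T_u}(-\sigma_{L_u}) = -Z_u^{\hparam, T_u}(\sigma_{L_u})$. Next, the same flip symmetry under $\PR_{\param^{*}}$ shows that the conditional laws $\mu_+$ and $\mu_-$ of $\sigma_{L_u}$ given $\sigma_u = +1$ or $-1$ are related by $\mu_-(\,\cdot\,) = \mu_+(-\,\cdot\,)$. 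Combining these two ingredients and a short change of variables ($Y \sim \mu_+$ implies $-Z_u(-Y) = Z_u(Y)$), the conditional law of $\sigma_u Z_u$ given $\sigma_u = -1$ coincides with its conditional law given $\sigma_u = +1$, which is equivalent to the claimed independence.

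For part (ii), I would combine part (i) with the tree Markov property. Node-disjointness of $T_{v_1}, \ldots, T_{v_k}$, together with the fact that each $T_{v_i}$ meets the rest of $T$ only at its root $v_i$, implies that conditioning on $(\sigma_{v_1}, \ldots, \sigma_{v_k})$ makes the leaf configurations $\sigma_{L_{v_i}}$ mutually independent, with the law of $\sigma_{L_{v_i}}$ depending only on $\sigma_{v_i}$. Hence the $Z_{v_i}$ are conditionally independent given $(\sigma_{v_j})_{j=1}^k$. For bounded test functions $g_1,\ldots,g_k$, this gives
\[
\E_{\param^{*}}\biggl[\prod_{i=1}^k g_i(\sigma_{v_i} Z_{v_i})\biggr] = \E_{\param^{*}}\biggl[\prod_{i=1}^k \E_{\param^{*}}\bigl[g_i(\sigma_{v_i} Z_{v_i}) \bigm| \sigma_{v_i}\bigr]\biggr].
\]
By part (i), each inner conditional expectation is deterministic and equals $\E_{\param^{*}}[g_i(\sigma_{v_i} Z_{v_i})]$; pulling these constants outside the outer expectation gives the product $\prod_i \E_{\param^{*}}[g_i(\sigma_{v_i} Z_{v_i})]$, i.e., mutual independence.

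I do not anticipate a serious obstacle: the substance is the symmetry argument in part (i), which is standard once the oddness of $Z_u^{\hparam, T_u}$ is noted. The only point requiring mild care in part (ii) is the joint conditional independence of the $\sigma_{L_{v_i}}$'s, which follows cleanly from the tree Markov property once the roots $(\sigma_{v_i})$ are fixed, since that conditioning decouples each $T_{v_i}$ both from the backbone connecting the $v_i$'s and from the other subtrees.
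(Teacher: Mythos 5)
Your proof is correct. The paper does not supply its own argument for this claim---it is recalled verbatim from the companion paper \cite{clancy2025likelihood}---so there is no in-paper proof to compare against line by line, but the route you take (global spin-flip symmetry of the CFN model for part \textbf{(i)}, then the tree Markov property layered on \textbf{(i)} for part \textbf{(ii)}) is exactly what the surrounding discussion and the sketch in the proof of Lemma~\ref{lem:independentWj} point to as the intended argument. One step in \textbf{(ii)} is worth making explicit: replacing $\E_{\param^*}\bigl[g_i(\sigma_{v_i}Z_{v_i})\mid(\sigma_{v_j})_{j=1}^k\bigr]$ by $\E_{\param^*}\bigl[g_i(\sigma_{v_i}Z_{v_i})\mid\sigma_{v_i}\bigr]$ requires that $\sigma_{L_{v_i}}$ be conditionally independent of $(\sigma_{v_j})_{j\neq i}$ given $\sigma_{v_i}$. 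This is a second, slightly different invocation of the Markov property than the mutual conditional independence you state (it uses that $v_i$ alone separates $T_{v_i}$ from every other $v_j$), and spelling it out would close the gap between "conditionally independent given the vector $(\sigma_{v_j})_j$" and the one-variable conditioning to which part \textbf{(i)} then applies.
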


Next, we deduce a useful corollary of Theorem \ref{thm:Robust}. It states that the unsigned magnetization $\sigma_{x}Z_{x}$ behaves similarly as $\sigma_{u}Z_{x}$, where $u$ is the parent of $x$. This result will be used to prove Lemma \ref{lem:k123} in Section \ref{sec:Hessian_off_diag_pf}. 

 \begin{corollary}\label{cor:Robust} 
Suppose the two children of $u$ are $x$ and $y$ with respective descendant subtrees $T_x$ and $T_y$. Under the hypothesis of Theorem \ref{thm:Robust}, there exists a constant $C_{\ref{eqn:descendent}}>0$ such that 
    \begin{equation}\label{eqn:descendent}
               \PR_{\param^{*}}\left(\sigma_u Z_x^{\hparam}(\sigma_{L_x}) < 0\right) \le C_{\ref{eqn:descendent}}\delta \quad\textup{and}\quad \PR_{\param^{*}}\left(\sigma_u Z_x^{\hparam}(\sigma_{L_x})<0, \sigma_u Z_y^{\hparam}(\sigma_{L_y})< 0\right) \le C_{\ref{eqn:descendent}}\delta^2.
            \end{equation} 
\end{corollary}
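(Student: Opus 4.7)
The strategy is to reduce both bounds to Theorem~\ref{thm:Robust}(i) via the identity $\sigma_u Z_x = (\sigma_u \sigma_x)(\sigma_x Z_x)$, using the fact that the ``edge flip'' factor $\sigma_u\sigma_x \in \{\pm 1\}$ equals $-1$ only with probability $p^*_{\{u,x\}}\le C_{\ref{eqn:pBounds}}\delta$.

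\smallskip

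\emph{First bound.} I would split $\{\sigma_u Z_x < 0\}$ according to whether there is a flip on edge $\{u,x\}$: if $\sigma_u\sigma_x = +1$ then $\sigma_u Z_x = \sigma_x Z_x$, so the event reduces to $\{\sigma_x Z_x < 0\}$, which by Theorem~\ref{thm:Robust}(i) has probability at most $c_{\ref{eqn:Reconstruct}}\delta$ once $\delta$ is small enough that $1 - C_{\ref{eqn:Reconstruct}}\delta^2 > 0$. The complementary case $\{\sigma_u\sigma_x = -1\}$ has probability $p^*_{\{u,x\}} \le C_{\ref{eqn:pBounds}}\delta$. Adding the two contributions yields the first bound with $C_{\ref{eqn:descendent}} \ge c_{\ref{eqn:Reconstruct}} + C_{\ref{eqn:pBounds}}$.

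\smallskip

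\emph{Second bound.} Here the key point is independence of the two descendant branches conditional on $\sigma_u$. Since $T_x$ and $T_y$ are node-disjoint descendant subtrees separated by $u$, the Markov property of the CFN model gives $Z_x \Perp Z_y \mid \sigma_u$ (this is exactly \eqref{eq:Z_conditional_independence} with $w = u$). Conditioning on $\sigma_u = s$, the events $\{\sigma_u Z_x < 0\}$ and $\{\sigma_u Z_y < 0\}$ become $\{sZ_x < 0\}$ and $\{sZ_y < 0\}$, which are conditionally independent. Moreover, by the $\pm$-symmetry of the CFN model (the joint distribution of spins is invariant under a global sign flip), the conditional probability $\PR_{\param^{*}}(\sigma_u Z_x < 0 \mid \sigma_u = s)$ does not depend on $s \in \{\pm 1\}$, and therefore equals the unconditional probability $\PR_{\param^{*}}(\sigma_u Z_x < 0)$, which is $O(\delta)$ by the first bound just proved. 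The same holds for the $y$-branch, so taking the expectation over $\sigma_u$ of the product of conditional probabilities gives the desired $O(\delta^2)$ bound.

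\smallskip

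\emph{Expected obstacles.} Both steps are short; the only subtle point is making the symmetry argument rigorous, i.e.\ justifying that $\PR_{\param^{*}}(\sigma_u Z_x < 0 \mid \sigma_u = s)$ is independent of $s$. This follows from the global sign-flip symmetry of the stationary distribution together with the fact that $Z_x$ depends on the leaves $\sigma_{L_x}$ in an odd way (flipping the leaf spins flips $Z_x$), but it is worth stating carefully to avoid confusion with the conditioning. Everything else is an elementary union bound once Theorem~\ref{thm:Robust}(i) is invoked.
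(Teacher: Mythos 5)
Your argument for the first inequality is the same as the paper's: split on whether there is a flip on the edge $\{u,x\}$, then apply Theorem~\ref{thm:Robust}(i) to the no-flip piece and bound the flip piece by $p^*_{\{u,x\}}\le C_{\ref{eqn:pBounds}}\delta$.

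For the second inequality you take a genuinely different route. The paper stays inside its unsigned-magnetization framework: it inserts the flip/no-flip dichotomy twice, decomposing $\{\sigma_u Z_x<0,\ \sigma_u Z_y<0\}$ into four events involving $\sigma_xZ_x$, $\sigma_yZ_y$, $\{\sigma_x\neq\sigma_u\}$, $\{\sigma_y\neq\sigma_u\}$, and then bounds each by $O(\delta^2)$ using Claim~\ref{claim:unsigned_mag} (independence of unsigned magnetizations) together with the already-proved first bound. You instead condition on $\sigma_u$, invoke conditional independence of $Z_x$ and $Z_y$ given $\sigma_u$ (valid, since the leaves of $T_x$ and of $T_y$ are conditionally independent given the spin at the separating vertex $u$), and use the global $\pm$-symmetry of the CFN law to identify $\PR(\sigma_uZ_x<0\mid\sigma_u=s)$ with the unconditional probability $\PR(\sigma_uZ_x<0)$. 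Your symmetry step is correct: $Z_x$ is odd in the leaf spins, so the global sign flip carries $\{\sigma_uZ_x<0,\ \sigma_u=+1\}$ onto $\{\sigma_uZ_x<0,\ \sigma_u=-1\}$, and since $\PR(\sigma_u=\pm1)=1/2$ the constant conditional probability equals the unconditional one. This yields the product bound in one step. Your route is a bit cleaner; the paper's keeps the argument parallel to how Claim~\ref{claim:unsigned_mag} is used throughout the paper and avoids restating the symmetry fact, at the cost of a longer union bound. Both are correct.
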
 

\begin{proof}
For the first bound, observe that 
    \begin{align*}
           \PR_{\param^{*}}\left(\sigma_u Z_x^{\hparam}(\sigma_{L_x}) < 0\right)  &=    \PR_{\param^{*}}\left(\sigma_x Z_x^{\hparam}(\sigma_{L_x}) < 0,\, \sigma_{x}=\sigma_{u}\right)  +    \PR_{\param^{*}}\left(-\sigma_x Z_x^{\hparam}(\sigma_{L_x}) < 0,\, \sigma_{x}\ne \sigma_{u}\right) \\
           &\le    \PR_{\param^{*}}\left(\sigma_x Z_x^{\hparam}(\sigma_{L_x}) < 0\right)  +    \PR_{\param^{*}}\left( \sigma_{x}\ne \sigma_{u}\right). 
    \end{align*}
    The first probability in the last expression is at most $c_{\ref{eqn:Reconstruct}}\delta$ by Theorem \ref{thm:Robust}\textbf{(i)} and the second one is $p_{ux}\le C_{\ref{eqn:pBounds}}\delta$.

    For the second bound, we use a similar argument to obtain 
     \begin{align*}
         & \PR_{\param^{*}}\left(\sigma_u Z_x^{\hparam}(\sigma_{L_x})<0, \sigma_u Z_y^{\hparam}(\sigma_{L_y})< 0\right) \\
         &\qquad \le \PR_{\param^{*}}\left(\sigma_x Z_x^{\hparam}(\sigma_{L_x})<0, \sigma_u Z_y^{\hparam}(\sigma_{L_y})< 0\right) +  \PR_{\param^{*}}\left( \sigma_{x}\ne \sigma_{u}, \sigma_u Z_y^{\hparam}(\sigma_{L_y})< 0 \right)\\
         &\qquad \le \PR_{\param^{*}}\left(\sigma_x Z_x^{\hparam}(\sigma_{L_x})<0, \sigma_y Z_y^{\hparam}(\sigma_{L_y})< 0\right) +\PR_{\param^{*}}\left(\sigma_x Z_x^{\hparam}(\sigma_{L_x})<0, \sigma_{x}\ne \sigma_{u}\right)\\
         & \qquad \qquad + \PR_{\param^{*}}\left( \sigma_{x}\ne \sigma_{u}, \sigma_y Z_y^{\hparam}(\sigma_{L_y})< 0 \right) + \PR_{\param^{*}}\left( \sigma_{x}\ne \sigma_{u}, \sigma_{y}\ne \sigma_{u} \right).
    \end{align*}
    The four terms in the last expression are all of order $O(\delta^{2})$ from the first bound in \eqref{eqn:descendent} and the independence between unsigned magnetization in Claim \ref{claim:unsigned_mag}. 
\end{proof}

\section{Proof of Theorem \ref{thm:DiagDominant}}
\label{sec:Hessian_off_diag_pf}

The rest of the main text is devoted to proving Theorem \ref{thm:DiagDominant}. 


\subsection{Bounding the diagonal entries of the Hessian}
\label{sec:diagonal_Hessian_pf}

Using Theorem \ref{thm:Robust}, we prove the estimate for the diagonal entries of the Hessian given in \eqref{eqn:HessianDiag} in Theorem  \ref{thm:DiagDominant}.

\begin{proof}[\textbf{Proof of \eqref{eqn:HessianDiag} in Theorem  \ref{thm:DiagDominant}}]
		Let us fix an edge $e = \{x,y\}$. Note that, by Lemma \ref{lem:derivative},
	\begin{equation*}
		\frac{\partial^2}{\partial \htheta_e^2} \ell(\hparam) = \frac{\partial}{\partial \htheta_e}\frac{Z_xZ_y}{1+Z_xZ_y\htheta_e} = -\frac{(Z_xZ_y)^2}{(1+Z_xZ_y \htheta_e)^2}. 
	\end{equation*} 
    Heuristically, the magnetizations $Z_{x}$ and $Z_{y}$ are likely to be close to the spins $\sigma_{x}$ and $\sigma_{y}$ respectively. Hence we should have the following approximation 
    \begin{align}\label{eq:hessian_diag_approx}
    \frac{(Z_xZ_y)^2}{(1+Z_xZ_y \htheta_e)^2} \approx \frac{(\sigma_x\sigma_y)^2}{(1+\sigma_x\sigma_y \htheta_e)^2}.    
    \end{align}
	The expectation of the right-hand side above can be easily computed. Indeed, considering whether there is a flip or not on the edge $e=\{x,y\}$ and recalling \eqref{eqn:peDef}, 
	\begin{align*}
		\E_{\param^{*}}\left[ \frac{(\sigma_x\sigma_y)^2}{(1+\sigma_x\sigma_y \htheta_e)^2}\right] 
        &= \underbrace{\frac{1}{(1+\htheta_e)^2} \frac{1+\theta_{e}}{2}}_{\Theta(1)}  + \underbrace{\frac{1}{(1-\htheta_e)^2}\frac{1-\theta_{e}}{2}}_{\Theta(\delta^{-1})} = \Theta(\delta^{-1}). 
	\end{align*} 
    
    In order to rigorously justify the above heuristic calculation, we need to control the error of the approximation in \eqref{eq:hessian_diag_approx}. 
        To do so, we use the trichotomy of magnetizations in \eqref{eq:recons_tiers} due to the reconstruction theorem (Thm. \ref{thm:Robust}) and  the independence properties of unsigned magnetizations in Claim \ref{claim:unsigned_mag}. 
		Define events 
	\begin{align}\label{eq:events_grad_thm_pf}
			\begin{cases}
				\quad 	F&:= \{ \textup{flip on the edge $e=\{x,y\}$} \}= \{\sigma_x\neq \sigma_y\} \\
				\quad	A&:= \{ \textup{good reconstruction at both $x$ and $y$}\} \\
				\quad	M&:= \{ \textup{one moderate failure and one good reconstruction among $x$ and $y$} \}\\
				\quad	E&:= A \cup (F^c\cap M). 
			\end{cases}
		\end{align}

        \medskip\noindent\textbf{Upper bound.} Observe that the function $t\mapsto \frac{t^2}{(1+t\htheta_e)^2}$ is maximized at $t = -1$ and attains the value $\frac{1}{(2c_{\ref{eqn:pHatBounds}}\delta)^2} = \Theta(\delta^{-2})$ at $t = -1$.  Therefore, extreme negative values of $\frac{\partial^2}{\partial\htheta_e^2} \ell(\hparam; \sigma)$ should occur when $Z_xZ_y$ is close to $-1$. This is precisely what happens on the event $F\cap A$. 
	In order to show the upper bound in \eqref{eqn:HessianDiag}, note that it suffices to show that 
		\begin{equation}\label{eqn:diag2}
			\E_{\param^{*}} \left[ \frac{(Z_xZ_y)^2}{(1+\htheta_e Z_xZ_y)^2}\mathbf{1}_{[F\cap A]}\right]  = \Omega(\delta^{-1}), 
		\end{equation}
		since then 
		\begin{align*}
			\E_{\param^{*}} \left[ \frac{(Z_xZ_y)^2}{(1+\htheta_e Z_xZ_y)^2}\right] \ge   \E_{\param^{*}} \left[ \frac{(Z_xZ_y)^2}{(1+\htheta_e Z_xZ_y)^2}\mathbf{1}_{[F\cap A]}\right]  = \Omega(\delta^{-1}).
		\end{align*} 
		
		To show \eqref{eqn:diag2}, note that on the event $F\cap A$ we have $Z_xZ_y\le -1+2C_{\ref{eqn:Reconstruct}}\delta^2$ since 
        \begin{align}
            -Z_xZ_y = (\sigma_{x}Z_{x})(\sigma_{y}Z_{y}) \ge (1-C_{\ref{eqn:Reconstruct}} \delta^2 )^{2} = 1 - 2 C_{\ref{eqn:Reconstruct}} \delta^2 \ge 0. 
        \end{align}
        So we get 
		\begin{equation*}
			2c_{\ref{eqn:pHatBounds}}\delta\le  1+\htheta_eZ_xZ_y \le 1+(1-2c_{\ref{eqn:pHatBounds}} \delta)(-1+2 C_{\ref{eqn:Reconstruct}} \delta^2) \le 2c_{\ref{eqn:pHatBounds}}\delta+2 C_{\ref{eqn:Reconstruct}}\delta^2,
		\end{equation*} 
		where the lower bound above holds as $Z_xZ_y\ge -1$ and \ref{assumption1}. That is, on $F\cap A$ the term $1+\htheta_e Z_xZ_y = \Theta(\delta)$. Similarly, $(Z_xZ_y)^2 = (-Z_{x}Z_{y})^{2} \ge  (1 - 2 C_{\ref{eqn:Reconstruct}} \delta^2)^{2} =\Omega(1)$ on the event $F\cap A$ again. Moreover, by the reconstruction theorem (Thm. \ref{thm:Robust}) and the independence property (Claim \ref{claim:unsigned_mag}), $\PR(F\cap A) = \PR(F)\, \PR(A) \ge c_{\ref{eqn:pBounds}}\delta (1-C_{\ref{eqn:Reconstruct}}\delta)^2 = \Omega(\delta)$. It follows that 
		\begin{equation*}
			\E_{\param^{*}}\left[ \frac{(Z_xZ_y)^2}{(1+\htheta_eZ_xZ_y)^2} \mathbf{1}_{[F\cap A]}\right] = \frac{\Omega(1)}{\Theta(\delta^2)}\PR(F\cap A) =  \Omega(\delta^{-1}).
		\end{equation*}

    \medskip\noindent\textbf{Lower bound.} To establish the lower bound in  \eqref{eqn:HessianDiag}, we need a comparable upper bound for $\frac{(Z_xZ_y)^2}{(1+\htheta_e Z_xZ_y)^2}$. We do this by partitioning the sample space into $F^{c}\cap E$ and $F\cup E^c$. Note that under Assumption \ref{assumption1} (in particular \eqref{eqn:pHatBounds}) we have always  $1+\htheta_eZ_xZ_y\ge 2c_{\ref{eqn:pHatBounds}}\delta$ and $(Z_xZ_y)^2\le 1$.
		This gives the upper bound
		\begin{equation}\label{eq:Hessian_diag_pf1}
			\frac{(Z_xZ_y)^2}{(1+\htheta_e Z_xZ_y)^2}\le \frac{1}{(2c_{\ref{eqn:pHatBounds}}\delta)^2} 
		\end{equation} 
		We claim that on the event $F^{c}\cap E=(F^{c}\cap A) \cup (F^{c}\cap M)$, the following stronger upper bounds of $O(1)$ hold: 
		\begin{align}\label{eq:Hessian_diag_pf2}
			\frac{(Z_x Z_y)^2}{(1+\htheta_e Z_xZ_y)^2}  \mathbf{1}_{[F^c\cap A]} &\le 1 \quad  \text{and} \quad  \frac{(Z_xZ_y)^2}{(1+\htheta_e Z_xZ_y)^2} \mathbf{1}_{[F^c\cap M]} \le \frac{1}{(1-c_{\ref{eqn:antiReconstruction}})^2}.
		\end{align} 
		Indeed, the fraction in front of the indicators in the above display will be large only when $Z_{x}Z_{y}$ is close to $-1$. This does not happen on the event $F^c\cap A$, since then $Z_xZ_y = \sigma_x\sigma_y Z_xZ_y\ge 0$. Since magnetizations are between $-1$ and $1$ and since $\hat{\theta}_{e}\ge 0$ by \ref{assumption1}, it follows that $Z_{x}Z_{y}\le 1 \le 1+ \hat{\theta}_{e}Z_{x}Z_{y}$. This yields the first inequality.  
		For the second inequality, note that on the event $F^{c}$ we have $\sigma_{x}\sigma_{y}=1$, and on the event $M$, the smallest possible value for $Z_xZ_y = \sigma_x\sigma_y Z_xZ_y$ is at least $-c_{\ref{eqn:antiReconstruction}}(1-C_{\ref{eqn:Reconstruct}} \delta^2)\ge -c_{\ref{eqn:antiReconstruction}}$ 
        so we also have the trivial upper bound of $1$ for the numerator. This shows the claim. 
		
		Now note that $\PR_{\param^{*}}(E^{c})\le \PR_{\param^{*}}(F)+\PR_{\param^{*}}(M)=O(\delta)$ by \ref{assumption1} and Thm. \ref{thm:Robust}. Hence $\PR_{\param^{*}}(F\cup E^{c})= O(\delta)$.
        Combining this with  \eqref{eq:Hessian_diag_pf1} and \eqref{eq:Hessian_diag_pf2}, we see that 
		\begin{align}
			\E_{\param^{*}}\left[\frac{(Z_xZ_y)^2}{(1+\htheta_e Z_xZ_y)^2}\right] 
			&= \E_{\param^{*}}\left[\frac{(Z_xZ_y)^2}{(1+\htheta_e Z_xZ_y)^2}\mathbf{1}_{F^{c}\cap E}\right]+ \E_{\param^{*}}\left[\frac{(Z_xZ_y)^2}{(1+\htheta_e Z_xZ_y)^2} \mathbf{1}_{F\cup E^c}\right] \nonumber 
			\\
			&=  O(1)  + O(\delta^{-2})\times O(\delta)	
            =O(\delta^{-1})\nonumber
		\end{align}
		This establishes the corresponding lower bound in \eqref{eqn:HessianDiag}.
	\end{proof}


\subsection{Bounding the off-diagonal entries of the Hessian}
\label{sec:pf_off_diagonal}

In this section, we will prove the bound for the off-diagonal entries of the Hessian stated in Theorem \ref{thm:DiagDominant} assuming three technical lemmas. We will also deduce Corollary \ref{cor:MLE_landscape} from Theorem \ref{thm:DiagDominant}.

We start from the bound \eqref{eq:Hessian_offdiag_bd_bd} in Lemma \ref{lem:off_diagonal_ind_bd}. Throughout this section, we assume that we are using the notation for part of the tree between edges $e$ and $f$ in Section \ref{sec:sketch_proof_offdiagonal} (see also Figure \ref{fig:TREEhess}).

First, we provide the postponed proof of Lemma \ref{lem:independentWj}, which states that the random variables appearing in the bound \eqref{eq:Hessian_offdiag_bd_bd} are independent.

\begin{proof}[\textbf{Proof of Lemma \ref{lem:independentWj}}]
By construction, the random variables $\widetilde{W}_{N},W_{N-4},\dots,W_{r+3},R_{r}$ depend on disjoint sets of random variables $\{\eta_{N+1},\dots,\eta_{N-3}\}$, $\{\eta_{N-4},\dots,\eta_{N-7}\},\dots, \{\eta_{r+3},\dots,\eta_{r}\}$, $\{\eta_{r-1},\dots,\eta_{0}\}$. Thus these random variables are conditionally independent given the spins along the path $\gamma$ consisting of the nodes $y_{N+1},\dots,y_{0},y_{-1}$. The point of this statement is that they are not only conditionally independent, but also actually independent. In order to justify this, we argue similarly as in the proof of Claim \ref{claim:unsigned_mag} (see~\cite{clancy2025likelihood}) by showing that the conditional laws of the random variables $\widetilde{W}_{N},W_{N-4},\dots,W_{r+3},R_{r}$ do not depend on the spins at $y_{N+1},\dots,y_{0},y_{-1}$. 

To spell out the details, first recall that the unsigned magnetizations $\sigma_{w_{0}}Z_{w_{0}},\dots,\sigma_{w_{N}}Z_{w_{N}},\sigma_{x}Z_{x}$  are independent by Claim \ref{claim:unsigned_mag}. 
So the `unsigned signals' $\sigma_{w_{0}}\eta_{0},\dots,\sigma_{w_{N}}\eta_{N},
\sigma_{y_{N+1}}\eta_{N+1}$,  which are scalar multiples of the unsigned magnetizations, are also independent. Now note that for each $i$, 
\begin{equation*}
    W_i | \{\sigma_{y_{i-1}} = 1\} \overset{d}{=}  W_i | \{\sigma_{y_{i-1}} = -1\}. 
\end{equation*}
It follows that $W_{i}$ is determined by the four independent unsigned signals $\sigma_{w_i}\eta_{i},\dots,\sigma_{w_{i-3}}\eta_{i-3}$ that are also independent of all the remaining unsigned signals.
Similarly, the `residual term' $R_{r}$ is determined by the unsigned signals $\sigma_{w_j}\eta_{j}$ for $0\le j\le r-1$ as well as the $\widetilde{W}_N$ term only depends on the unsigned signals $\sigma_{w_N}\eta_{N},\dotsm, \sigma_{w_{N-3}} \eta_{N-3}$ as well as the unsigned magnetization $\sigma_xZ_x$ at $x$. 
We conclude that the random variables $\widetilde{W}_{N},W_{N-4},\dots,W_{r+3},R_{r}$ depend on disjoint sets of independent random variables, which yields the desired independence between them. 
\end{proof}

We now state four lemmas which imply Theorem \ref{thm:DiagDominant} \textbf{(ii)}. Their proof is relegated to the Section \ref{sec:Hessian_off_diag_pf}.

\begin{lemma}[Approximate distribution of $W_i$]\label{lem:4terms}
 There exist constants $\delta_{\ref{eqn:boundForFiniteSample}} >0$ and $C_{\ref{eqn:boundForFiniteSample}}<\infty$ such that the following holds for all binary trees $T$ and $\delta\le \delta_{\ref{eqn:boundForFiniteSample}}$. Suppose Assumption \ref{assumption1} holds. Then for all $e,f\in E(T)$ and for all $i=3,4,\dotsm,N = \dist(e,f)$ 
	\begin{equation}\label{eqn:boundForFiniteSample}
		W_i 
  \in \begin{cases}[0,C_{\ref{eqn:boundForFiniteSample}}\delta^2]&:\textup{ with probability at least }1-C_{\ref{eqn:boundForFiniteSample}}\delta\\
			(C_{\ref{eqn:boundForFiniteSample}}\delta^2,C_{\ref{eqn:boundForFiniteSample}}]&:\textup{ with probability at most }C_{\ref{eqn:boundForFiniteSample}}\delta\\
			(C_{\ref{eqn:boundForFiniteSample}},C_{\ref{eqn:boundForFiniteSample}}\delta^{-1}]&:\textup{ with probability at most }C_{\ref{eqn:boundForFiniteSample}}\delta^2\\
			(C_{\ref{eqn:boundForFiniteSample}}\delta^{-1},C_{\ref{eqn:boundForFiniteSample}}\delta^{-2}]&:\textup{ with probability at most }C_{\ref{eqn:boundForFiniteSample}}\delta^3\\
			(C_{\ref{eqn:boundForFiniteSample}}\delta^{-2},\infty)&:\textup{ with probability }0
		\end{cases}
	\end{equation}
 where $W_i$ are defined as in \eqref{eq:def_W_i}. 
	In particular, 
	\begin{equation}\label{eqn:4terms}
    \max_{3\le i\le N }	\,\E_{\param^{*}} [ W_{i} ]
		\le C_{\ref{eqn:boundForFiniteSample}}^2\delta \quad \textup{and} \quad 	\max_{3\le i\le N} \, 	\E_{\param^{*}}\left[ W_{i}^2\right] \le \frac{C^{3}_{\ref{eqn:boundForFiniteSample}}}{\delta}.
	\end{equation}
\end{lemma}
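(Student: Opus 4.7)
The approach combines an algebraic telescoping identity with a case analysis based on the reconstruction tiers at the four peripheral nodes $w_{i-3},\dots,w_i$. The key algebraic tool is the identity
\[
1 - q(a,b)^2 \,=\, \frac{(1-a^2)(1-b^2)}{(1+ab)^2},
\]
which rewrites each factor as $\frac{1-\eta_j^2}{(1+\xi_j^\circ\eta_j)^2} = \frac{1-q(\eta_j,\xi_j^\circ)^2}{1-(\xi_j^\circ)^2}$. Using $q(\eta_j,\xi_j^\circ)^2 = (\xi_{j+1}^\circ)^2/\hat\theta_j^2 \ge (\xi_{j+1}^\circ)^2$ from the recursive definition $\xi_{j+1}^\circ = \hat\theta_j q(\eta_j,\xi_j^\circ)$, the product telescopes to
\[
\prod_{j=i-3}^i \frac{1-\eta_j^2}{(1+\xi_j^\circ\eta_j)^2} \,\le\, \prod_{j=i-3}^i \frac{1-(\xi_{j+1}^\circ)^2}{1-(\xi_j^\circ)^2} \,=\, \frac{1-(\xi_{i+1}^\circ)^2}{1-x^2}.
\]
Taking the supremum over $|x|\le 1-2c_{\ref{eqn:pHatBounds}}\delta$ yields the universal deterministic bound $W_i \le 1/(4c_{\ref{eqn:pHatBounds}}\delta - 4c_{\ref{eqn:pHatBounds}}^2\delta^2) = O(\delta^{-1})$, which already subsumes the last regime of \eqref{eqn:boundForFiniteSample}.

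For the finer tail regimes, I would partition the probability space according to (a) the number $k$ of moderate/severe reconstruction failures among $w_{i-3},\dots,w_i$ and (b) the number $l$ of sign discrepancies $\sigma_{w_j}\neq \sigma_{w_{j+1}}$ along $j=i-3,i-2,i-1$. Claim~\ref{claim:unsigned_mag}\textbf{(ii)} gives independence of the unsigned magnetizations $\sigma_{w_j}Z_{w_j}$, so the reconstruction tiers factor over $j$; the CFN model with $p_e^* = \Theta(\delta)$ implies each sign discrepancy carries a factor of $\Theta(\delta)$ in probability. Theorem~\ref{thm:Robust} combined with a union bound yields $\PR(k=l=0) \ge 1 - C\delta$, $\PR(k+l\ge 1) = O(\delta)$, $\PR(k+l\ge 2) = O(\delta^2)$, and $\PR(k+l\ge 3) = O(\delta^3)$, matching the probability columns of \eqref{eqn:boundForFiniteSample}.

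On the main event $\{k=l=0\}$, all $|\eta_j|$ equal $1-O(\delta)$ with common sign $\sigma\in\{\pm 1\}$, so $1-\eta_j^2 = O(\delta)$ for each $j$. In this regime the recursion has a self-damping property: if $\xi_j^\circ\eta_j \approx -1$ at some step $j$ (a \emph{strong conflict}, yielding the worst-case factor $A_j = O(\delta^{-1})$), then a direct computation using the formula $q = (\eta_j+\xi_j^\circ)/(1+\eta_j\xi_j^\circ)$ shows $|\xi_{j+1}^\circ| = \hat\theta_j |q| \le (C_{\ref{eqn:pHatBounds}}-c_{\ref{eqn:pHatBounds}})/(C_{\ref{eqn:pHatBounds}}+c_{\ref{eqn:pHatBounds}}) < 1$, i.e., the next signal is trapped in a compact sub-interval of $(-1,1)$. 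Consequently $(1+\xi_{j+1}^\circ\eta_{j+1})^2$ is bounded below by a positive constant, and a further application of $q$ restores $\xi_{j+2}^\circ$ close to $\sigma\hat\theta_{j+1}$, forcing subsequent agreements. Hence the adversarial $x$ can create at most one strong conflict (attained near $x\approx -\sigma$) and the remaining three factors are $O(\delta)$, giving $W_i = O(\delta^{-1})\cdot O(\delta)^3 = O(\delta^2)$. The intermediate tail bounds $W_i \le C_{\ref{eqn:boundForFiniteSample}}$ and $W_i \le C_{\ref{eqn:boundForFiniteSample}}\delta^{-1}$ are established by repeating this analysis in the presence of one or two "bad" locations, where either a moderate failure inflates a single numerator to $O(1)$ or a sign flip enables an extra strong conflict at a non-adjacent step. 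The moment bounds \eqref{eqn:4terms} then follow by weighing each regime by its probability.

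The main obstacle is the main-event analysis: it requires quantitative control of $q(\eta,\xi) = (\eta+\xi)/(1+\eta\xi)$ near its degenerate points $(\pm 1,\mp 1)$, in order to show that after an adversarial strong conflict the signal $\xi_{j+1}^\circ$ is deterministically forced into a range from which strong conflicts at subsequent steps become impossible as long as the signs $\sigma_{w_j}$ remain aligned. This reduces the problem of bounding a product of four dependent factors to a bookkeeping exercise governed by the sign pattern along the path.
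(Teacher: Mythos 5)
Your telescoping bound is correct, and it is a nice touch: writing $\frac{1-\eta_j^2}{(1+\xi_j^\circ\eta_j)^2}=\frac{1-q(\eta_j,\xi_j^\circ)^2}{1-(\xi_j^\circ)^2}$ and using $q(\eta_j,\xi_j^\circ)^2\ge(\xi_{j+1}^\circ)^2$ gives the almost sure bound $W_i\le \sup_{|x|\le 1-2c_{\ref{eqn:pHatBounds}}\delta}(1-x^2)^{-1}=O(\delta^{-1})$, which is even sharper than the paper's worst-case $O(\delta^{-2})$ (obtained there via Lemma \ref{lem:swap} and two applications of Proposition \ref{prop:twoTerms}) and disposes of the last two tiers of \eqref{eqn:boundForFiniteSample} at once. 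Your analysis of the main event (all four signals strong and aligned, at most one adversarial strong conflict, which damps within one step) is in spirit the paper's treatment of the no-flip/no-failure event, though you leave the quantitative recovery step as an acknowledged obstacle.

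The genuine gap is in the probability bookkeeping for the intermediate tiers. You classify by $k$ (reconstruction failures) plus $l$, the number of sign discrepancies $\sigma_{w_j}\neq\sigma_{w_{j+1}}$ over consecutive indices, and assert $\PR(k+l\ge 2)=O(\delta^2)$ and $\PR(k+l\ge 3)=O(\delta^3)$. This is false: consecutive discrepancy events share the edge $\{y_{j+1},w_{j+1}\}$, and a single flip on a middle edge $\{y_j,w_j\}$ makes $\sigma_{w_j}$ disagree with both of its neighbours, so $\PR(l\ge 2)=\Theta(\delta)$, not $O(\delta^2)$. Since your value analysis assigns events with two ``bad locations'' only the bound $W_i\le C\delta^{-1}$ (``a sign flip enables an extra strong conflict''), the third tier $\PR(W_i>C)\le C\delta^2$ does not follow, and with it the moment bounds \eqref{eqn:4terms} collapse: on a probability-$\Theta(\delta)$ event you would only control $W_i$ by $\delta^{-1}$, giving $\E[W_i]=O(1)$ rather than $O(\delta)$. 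The repair is twofold: classify by flips on individual edges and failures at the nodes (the paper's events $A_{i,j,k}$ in \eqref{eqn:AijkDef}, which enjoy genuine independence via Claim \ref{claim:unsigned_mag}), and prove the key missing value estimate that flips alone, with no reconstruction failures, never push $W_i$ above $O(1)$: all $|\eta_j|=1-\Theta(\delta)$ remain strong, so any strong conflict's $\Theta(\delta^{-2})$ denominator is offset by the $\Theta(\delta)$ numerators --- this is exactly Proposition \ref{prop:twoTerms}(2) applied to the pairs $(\eta_{i-3},\eta_{i-2})$ and $(\eta_{i-1},\eta_i)$, and your telescoping bound alone does not recover it. As written, your heuristic points the wrong way on precisely the event that breaks the accounting.
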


Lemma \ref{lem:4terms} states that the random variables $W_{3},W_{4},\cdots,W_N$ defined in \eqref{eq:def_W_i} have roughly the same distribution given by \eqref{eqn:boundForFiniteSample}. This easily implies the uniform bound on their first two moments in \eqref{eqn:4terms}. The first result on almost identical distribution at a first glance seems surprising since they could depend on very different subtrees. For instance, suppose the edge $e$ is close to the root of the binary tree $T$ and the other edge $f$ is near the leaves. Then traversing the path from $f$ to $e$, one encounters larger and larger subtrees $T_{w_{j}}$ injecting signals $\eta_{j}$ into the internal node $y_{j}$ (see Fig. \ref{fig:TREEhess}\textbf{b}). However, by the robust ancestral reconstruction (Thm. \ref{thm:Robust}), the signals $\eta_{j}$ roughly have the same distribution regardless of the supporting subtrees $T_{w_{j}}$. This, with the fact that the unsigned signals $\sigma_{w_{i}}\eta_{i}$ are independent, each $W_{k}$ is determined by four signals  $\eta_{i}, \eta_{i-1},\eta_{i-2}, \eta_{i-3}$ with approximately the same joint distribution. So it is not surprising that $W_{i}$s also have approximately the same distribution, as stated in Lemma \ref{lem:4terms}.

The next lemma bounds the first two moments of the term $\widetilde{W}_{N}$ in \eqref{eq:def_widetildeW_i}. Since it depends on an additional term from the magnetization $Z_{x}=\eta_{N+1}$, we have a slightly worse bound on its first two moments than those for $W_{i}$s in Lemma \ref{lem:4terms}.

\begin{lemma}[Moment bounds on $\widetilde{W}_N$ and $R_r$] \label{lem:5terms} 
	There exist constants $C_{\ref{eqn:4termsDiag}},C_{\ref{eqn:5termVar}}<\infty$ and $\delta_{\ref{eqn:4termsDiag}}>0$ depending only on $c_{\ref{eqn:pBounds}}, C_{\ref{eqn:pBounds}}, c_{\ref{eqn:pHatBounds}}, C_{\ref{eqn:pHatBounds}}$ such that the following holds for all binary trees $T$ and any $\delta\le \delta_{\ref{eqn:4termsDiag}}$. Suppose that Assumption \ref{assumption1} holds and let $e,f\in E(T)$ be any two edges such that $N = \dist(e,f)\ge 3$ and $r\in\{0,1,2,3\}$. Then for $\widetilde{W}_N$ and $R_r$ defined in \eqref{eq:def_widetildeW_i} and \eqref{eq:def_R_N} (resp.) 
	\begin{equation} \label{eqn:4termsDiag}
		\E_{\param^{*}}\left[ \widetilde{W}_{N} \right] \le 
		C_{\ref{eqn:4termsDiag}}\qquad\textup{and}\qquad \E_{\param^{*}}[R_r]\le C_{\ref{eqn:4termsDiag}}.
	\end{equation}
	Furthermore, 
	\begin{equation}\label{eqn:5termVar}
		\E_{\param^{*}}\left[ \widetilde{W}_N^{2} \right]\le \frac{C_{\ref{eqn:5termVar}}}{\delta^4}\qquad\textup{and}\qquad \E_{\param^{*}}\left[ R_r^{2} \right]\le \frac{C_{\ref{eqn:5termVar}}}{\delta^3}.
	\end{equation}
\end{lemma}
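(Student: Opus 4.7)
The plan is to mirror the case analysis carried out for $W_i$ in Lemma \ref{lem:4terms}, leveraging the trichotomy \eqref{eq:recons_tiers} from Theorem \ref{thm:Robust} together with the almost-sure bound \eqref{eqn:xiandetabound}. Each factor $\frac{1-\eta_j^2}{(1+\xi_j\eta_j)^2}$ is bounded pointwise by $(2c_{\ref{eqn:pHatBounds}}\delta)^{-2}$, so the naive worst-case bounds are $R_r \le O(\delta^{-2r})$ and $\widetilde W_N \le O(\delta^{-10})$. I would tighten these by partitioning the sample space according to which of the involved signals are in good reconstruction, moderate failure, or severe failure, and whether any spin flips occur on the corresponding tree edges, then estimating the contribution of each case separately.

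The bound on $R_r$ is the easier piece. Since $r \le 3$, the product has at most three factors, and the input $\xi_0 = \hat\theta_{-1} Z_{y_{-1}}$ is a genuine (non-adversarialized) signal to which Theorem \ref{thm:Robust} applies directly. In the dominant event where $\xi_0$ and $\eta_0,\ldots,\eta_{r-1}$ all have good reconstruction and the adjacent edges carry no flips (probability $1 - O(\delta)$), the denominators $(1+\xi_j\eta_j)^2$ are $\Theta(1)$ while the numerators $1-\eta_j^2$ are $O(\delta^2)$, so each factor is $O(\delta^2)$ and $R_r = O(\delta^{2r}) = O(1)$. Each additional failure or flip enlarges a factor by at most $O(\delta^{-2})$ but occurs with extra probability at most $O(\delta)$; since only $O(1)$ many such bad-event configurations arise when $r \le 3$, the total expectation is $O(1)$. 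The second-moment bound proceeds identically with squared factors: each factor's worst-case magnitude becomes $O(\delta^{-4})$, and the dominant bad contribution (for instance a single flip adjacent to the path) produces $O(\delta^{-4}) \cdot O(\delta) = O(\delta^{-3})$, matching \eqref{eqn:5termVar}.

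For $\widetilde W_N$, the natural first attempt is the product decomposition
\[
\widetilde W_N \le W_N \cdot \sup_{|x|\le 1-2c_{\ref{eqn:pHatBounds}}\delta} \frac{1}{(1+\xi^\circ_{N+1}\eta_{N+1})^2},
\]
but I expect this to be too lossy on its own: the sup is bounded a priori by $O(\delta^{-2})$ and $\E_{\param^{*}}[W_N] = O(\delta)$, producing only $\E_{\param^{*}}[\widetilde W_N] = O(\delta^{-1})$, which falls short of the claimed $O(1)$ bound. The resolution is that $W_N$ and the end-factor sup are correlated through the shared signals $\eta_{N-3},\ldots,\eta_N$: in the dominant event where all five signals $\eta_{N-3},\ldots,\eta_{N+1}$ have good reconstruction (probability $1 - O(\delta)$), the four-step recursion already forces $\xi^\circ_{N+1}$ toward a specific sign determined by the $\eta_j$, so an adversarial choice of $x$ cannot simultaneously inflate both $W_N$ and the end factor. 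Combining the distributional classification \eqref{eqn:boundForFiniteSample} of $W_N$ with this sign-alignment observation, and summing the contributions of the $O(\delta)$-probability failure events separately, should yield $\E_{\param^{*}}[\widetilde W_N] \le C_{\ref{eqn:4termsDiag}}$. The second moment uses the same case decomposition with all magnitudes squared, the extremal contribution coming from the rare event of a single flip on edge $e$, which produces the $O(\delta^{-4})$ bound.

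The main obstacle I anticipate is the sign-alignment/coupling argument sketched above: the proof of Lemma \ref{lem:4terms} for $W_i$ is itself delicate, and extending it to jointly control $W_N$ and the end factor of $\widetilde W_N$ through their shared randomness requires re-running the entire case analysis while carrying one extra signal $\eta_{N+1}$ and the extra factor throughout. In practice I would prove Lemmas \ref{lem:4terms} and \ref{lem:5terms} in parallel, so that the sign-propagation analysis developed for $W_i$ can be reused almost verbatim, with the case bookkeeping extended to accommodate the extra end factor in $\widetilde W_N$ and the absence of adversarialization at $\xi_0$ in $R_r$.
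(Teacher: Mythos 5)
Your overall strategy for $\widetilde W_N$ --- run the same $A_{i,j,k}$-type case analysis as for $W_i$, carrying the extra signal $\eta_{N+1}$ and the end factor $\frac{1}{(1+\xi_{N+1}\eta_{N+1})^2}$ through it --- is indeed how the paper proceeds, and your treatment of $R_r$ is essentially sound (the paper handles $r=1$ by splitting on the sign of $\xi_0\eta_0$ and the tier of $\sigma_{w_0}\eta_0$, gets $r=2$ from Proposition \ref{prop:twoTerms}, and $r=3$ by independence of a two-term block and $R_1$; second moments then follow from $\E[X^2]\le \|X\|_{L^\infty}\E[X]$ rather than a squared case analysis). But for $\widetilde W_N$ there is a genuine gap: the one concrete mechanism you offer, sign-alignment of $\xi^\circ_{N+1}$ on the event where all five signals are good reconstructions, addresses a case that needs no help --- there $W_N=O(\delta^2)$ by \eqref{eqn:boundForFiniteSample_pf} and the crude end-factor bound $O(\delta^{-2})$ already gives $O(1)$. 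The real difficulty sits in the $O(\delta)$--$O(\delta^3)$-probability configurations where a moderate failure or flip occurs \emph{adjacent to the end}, e.g.\ on $A_{1,1,0}$ with $\sigma_{w_N}=\sigma_{y_N}$ and a moderate failure at $w_N$: there $\eta_N$ can be near $-c_{\ref{eqn:antiReconstruction}}$ while a flip further down makes $\xi_N$ strongly negative, so $\xi_{N+1}=\htheta_N q(\eta_N,\xi_N)$ can be close to $-1$ even though $\eta_{N+1}\approx+1$; the end factor is then genuinely $\Theta(\delta^{-2})$ while $W_N=\Theta(\delta^{-1})$, and summing $O(\delta^{-3})$ against the event probability $O(\delta^2)$ gives a divergent $O(\delta^{-1})$ contribution. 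No sign-propagation argument rescues this; the paper resolves it with the algebraic reversal identity of Lemma \ref{lem:swap}\textbf{(ii)}, which rewrites
\begin{equation*}
\frac{1}{(1+\xi_{N+1}\eta_{N+1})^2}\,\frac{1-\eta_N^2}{(1+\xi_N\eta_N)^2}
=\frac{1}{1-(\htheta_N\tilde\eta_{N+1})^2}\,\frac{1-\tilde\eta_N^2}{(1+\xi_N\tilde\eta_N)^2},
\qquad \tilde\eta_N=q(\htheta_N\eta_{N+1},\eta_N)\in[0,1],
\end{equation*}
so the strong signal $\eta_{N+1}$ ``rescues'' the moderate $\eta_N$ at a cost of only $O(\delta^{-1})$, after which Proposition \ref{prop:twoTerms} applies to the modified signals. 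Your plan never identifies this (or any equivalent) device, and without it the case bookkeeping you sketch cannot close.

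Two smaller points. First, your claim that good reconstruction makes the numerators $1-\eta_j^2$ of order $O(\delta^2)$ is off: since $|\eta_j|\le\htheta_{\{y_j,w_j\}}\le 1-2c_{\ref{eqn:pHatBounds}}\delta$, they are only $\Theta(\delta)$; this does not change your $R_r$ conclusions but it does affect the per-case arithmetic you would need for $\widetilde W_N$. Second, for the second moments there is no need to redo the case analysis with squared factors: the paper first establishes the almost-sure bound $\widetilde W_N\le O(\delta^{-2})W_N=O(\delta^{-4})$ (via Proposition \ref{prop:twoTerms}, sharper than your naive $O(\delta^{-10})$) and $R_r=O(\delta^{-3})$, and then \eqref{eqn:5termVar} follows immediately from the first-moment bounds and $\E[X^2]\le\|X\|_{L^\infty}\E[|X|]$; the same $L^\infty$ bound is also what lets one discard all events of probability $O(\delta^4)$ at the start of the first-moment argument, a reduction your sketch would otherwise have to supply.
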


As we will see below, Lemma \ref{lem:4terms} and Lemma \ref{lem:5terms} give us good control on the first two moments of $\left| \frac{\partial^2}{\partial \htheta_e\partial\htheta_f} \ell(\hparam; \sigma)\right|$ whenever $N=\operatorname{dist}(e,f)\ge 4$. 
To handle the near-diagonal terms $(N=0,1,2)$ we include the following lemma.
\begin{lemma}[Bounds on the Hessian near diagonal]\label{lem:k123}\label{lem:nearDiagonal}
  There exist constants
    $C_{\ref{eqn:nearDiag}}, C_{\ref{eqn:nearDiag2}}>0$ and $\delta_{\ref{eqn:nearDiag}}>0$ depending only on $c_{\ref{eqn:pBounds}},C_{\ref{eqn:pBounds}}, c_{\ref{eqn:pHatBounds}}, C_{\ref{eqn:pHatBounds}}$ such that for any binary tree $T$ and any $\delta\le \delta_{\ref{eqn:nearDiag}}$ the following holds. Suppose Assumption \ref{assumption1} holds. Then for any distinct $e,f$ with $N=\dist(e,f)\in\{0,1,2\}$ it holds that
	\begin{equation}\label{eqn:nearDiag}
		\E_{\param^{*}}\left[ \left|\frac{\partial^2}{\partial \htheta_e\partial\htheta_f} \ell(\hparam; \sigma|_{L})\right|\right]
		\le C_{\ref{eqn:nearDiag}} 
	\end{equation}
 and 
 \begin{equation}\label{eqn:nearDiag2}
     \E_{\param^{*}}\left[ \left|\frac{\partial^2}{\partial \htheta_e\partial\htheta_f} \ell(\hparam; \sigma|_{L})\right|^2\right]
		\le \begin{cases}
		    C_{\ref{eqn:nearDiag2}} \delta^{-2} &: N = 0,1\\
              C_{\ref{eqn:nearDiag2}} \delta^{-3} &: N = 2
		\end{cases}.
 \end{equation}
\end{lemma}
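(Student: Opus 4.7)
Plan: I would prove Lemma \ref{lem:nearDiagonal} by direct case analysis on the Hessian formula from Proposition \ref{prop:derivative}, which for $N = \dist(e,f) \in \{0, 1, 2\}$ involves only a constant number of independent unsigned magnetizations. Specializing the upper bound \eqref{eq:Hessian_offdiag_bd}, the Hessian is controlled by a product of at most $N + 2 \le 4$ factors $(1 - \eta_j^2)/(1 + \xi_j \eta_j)^2$ together with the leading denominator $(1 + \xi_{N+1} \eta_{N+1})^{-2} = (1 + \htheta_e Z_x Z_y)^{-2}$. By Claim \ref{claim:unsigned_mag}\textbf{(ii)}, the unsigned magnetizations $\sigma_{w_j} Z_{w_j}$ for $0 \le j \le N$, together with $\sigma_x Z_x$ and $\sigma_v Z_v$, are mutually independent, since the supporting descendant subtrees are node-disjoint. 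The adversarial-decoupling machinery of Lemmas \ref{lem:4terms}--\ref{lem:5terms} is not needed and in fact does not apply here, since there are too few factors to group into blocks of four; this is also why no exponential-in-$N$ decay appears in the near-diagonal bounds.

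The main step is a case analysis keyed to (i) the reconstruction trichotomy \eqref{eq:recons_tiers} for each of these $O(1)$ independent unsigned magnetizations (good / moderate / severe, with probabilities $1 - O(\delta)$, $O(\delta)$, and $O(\delta^2)$ by Theorem \ref{thm:Robust}), and (ii) the spin-flip pattern on the $O(1)$ edges in the neighborhood of $e$ and $f$, each an independent Bernoulli with parameter $\Theta(\delta)$. On the joint good-reconstruction event, each numerator $1 - \eta_j^2$ is $\Theta(\delta)$, while each denominator $(1 + \xi_j \eta_j)^2$ is $\Theta(1)$ when the signs of the relevant signals agree and $\Theta(\delta^2)$ when they disagree (which requires a flip on the corresponding edge, occurring with probability $\Theta(\delta)$). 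Summing over the finite sample space of flip patterns, weighted by their $\Theta(\delta^k)$ probabilities, the first-moment contributions balance to $O(1)$. The complement of the good event, which has probability $O(\delta)$, is handled using the refined tails from Theorem \ref{thm:Robust} combined with the crude pointwise bound $|\partial^2 \ell| \le (2 c_{\ref{eqn:pHatBounds}} \delta)^{-2(N+2)}$ valid under Assumption \ref{assumption1}.

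The second-moment bound \eqref{eqn:nearDiag2} follows the same template with all factors squared: each squared denominator is $\Theta(1)$ or $\Theta(\delta^{-4})$, each squared numerator is $\Theta(\delta^2)$ on the good event, and the dominant flip patterns contribute $O(\delta^{-2})$ for $N \in \{0,1\}$ and $O(\delta^{-3})$ for $N = 2$, the extra factor of $\delta^{-1}$ for $N = 2$ arising from one additional denominator that can simultaneously degenerate. The principal obstacle is that the leading denominator $(1 + \htheta_e Z_x Z_y)^2$ involves $Z_y$, which couples recursively through \eqref{eqn:recursionBorg2} to all the upstream magnetizations, so it does not factor cleanly across the independent unsigned magnetizations. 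For $N \le 2$, however, this recursion can be explicitly unwound a bounded number of times into a rational function of the $O(1)$ independent unsigned magnetizations and spin-flip indicators, reducing the proof to a tedious but constant-size check over flip patterns.
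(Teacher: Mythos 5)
Your overall architecture (finitely many independent unsigned magnetizations, trichotomy from Theorem \ref{thm:Robust}, case analysis over flip patterns) is the same skeleton the paper uses, but there is a genuine quantitative gap in how you handle the bad events, and it is precisely where the real work of the paper's proof lies. You propose to treat the complement of the joint good-reconstruction event, of probability $O(\delta)$, with the crude pointwise bound $|\partial^2\ell|\le (2c_{\ref{eqn:pHatBounds}}\delta)^{-2(N+2)}$. That product is $O(\delta)\cdot O(\delta^{-2(N+2)})$, which already for $N=0$ is $O(\delta^{-3})$, nowhere near the claimed $O(1)$; even the sharp almost-sure bounds ($O(\delta^{-2})$ for $N=0,1$ and $O(\delta^{-3})$ for $N=2$, which the paper proves and which are \emph{not} implied by bounding each denominator separately) still give $O(\delta^{-1})$ or $O(\delta^{-2})$ when multiplied by a probability-$O(\delta)$ event. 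What is actually needed — and what the paper does — is to stratify the bad event by the number of flips, moderate failures and severe failures (events of probability $O(\delta^{i+j+2k})$) and to prove, on each stratum, a \emph{matching} pointwise bound of order $O(\delta^{-(i+j+2k)})$. Establishing those stratum-wise bounds is nontrivial because the intermediate denominators $1+\xi_j\eta_j$ and the leading denominator $(1+\htheta_e Z_xZ_y)^2$ are coupled to the upstream magnetizations through the recursion \eqref{eqn:recursionBorg2}; the paper resolves this with the recursion-reversal identities of Lemma \ref{lem:swap}, the $\htheta^{1/2}$ rewriting in \eqref{eq:F_sqrt_eta0_xi0} (which also absorbs the problem that $Z_x=\pm1$ when $x$ is a leaf, so the leading factor has no compensating numerator), the paired-factor bound of Proposition \ref{prop:twoTerms}, and Claims \ref{claim:quickBound1}--\ref{claim:infxy} together with Corollary \ref{cor:Robust} for the mixed-sign configurations. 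Your closing remark that the recursion "can be explicitly unwound a bounded number of times" gestures at this, but the proposal neither supplies the mechanism nor corrects the failing probability-times-worst-case estimate, so as written the first-moment bound does not close.

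The same gap propagates to the second moment. The paper obtains \eqref{eqn:nearDiag2} essentially from $\E[X^2]\le \|X\|_{L^\infty}\E[|X|]$, which requires the sharp almost-sure bounds $O(\delta^{-2})$ ($N=0,1$) and $O(\delta^{-3})$ ($N=2$); with your crude $L^\infty$ bound $(2c_{\ref{eqn:pHatBounds}}\delta)^{-2(N+2)}$ the stated rates cannot be reached, and your alternative heuristic (squared numerators $\Theta(\delta^2)$, squared denominators $\Theta(1)$ or $\Theta(\delta^{-4})$, "one additional denominator that can simultaneously degenerate" for $N=2$) is not a proof, again because the degeneracies of the several denominators are not independent events and their joint behavior is exactly what Lemma \ref{lem:swap} and Proposition \ref{prop:twoTerms} are needed to control.
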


We now show how these lemmas imply the moment bounds \eqref{eqn:HessiangOffDiag} on the off-diagonal entries in the Hessian. 
\begin{proof}[\textbf{Proof of \eqref{eqn:HessiangOffDiag} in Theorem \ref{thm:DiagDominant} assuming Lemmas \ref{lem:4terms}--\ref{lem:nearDiagonal}}]
	Fix two distinct edges $e$ and $f$ and write $\text{dist}(e,f)=N\ge 0$. (Observe that $\text{dist}$ is not a metric as we say that $e = \{x,y\}, f = \{y,z\}$ have $\textup{dist}(e,f) = 0$.) Recall the decomposition  \eqref{eq:Hessian_offdiag_bd_bd}
	\begin{align}\nonumber 
		\bigg| \frac{\partial^2}{\partial\htheta_e\partial\htheta_f} \ell(\hparam;\sigma|_{L})\bigg| 
		\le   \widetilde{W}_N \underset{\lfloor{\frac{N-3}{4}} \rfloor \textup{ many ``}W_i\textup{'' terms}}{\underbrace{W_{N-4}W_{N-8}\dotsm W_{r+3}}}R_r\qquad \textup{ where } r := (N+1)\mod 4 
	\end{align}
	where the random variables in the right-hand side are independent (by Lemma \ref{lem:independentWj}). Hence, taking expectation under the population model and using Lemmas \ref{lem:4terms}, \ref{lem:5terms}, and Lemma \ref{lem:nearDiagonal} we get 
	\begin{align} 
		\nonumber\E_{\param^{*}} \left[ 	\bigg| \frac{\partial^2}{\partial\htheta_e\partial\htheta_f} \ell(\hparam;\sigma|_{L})\bigg|  \right] \le \begin{cases}
		    	C^2_{\ref{eqn:4termsDiag}} (C_{\ref{eqn:4terms}}\delta)^{\lfloor (N-3)/4 \rfloor} &: N\ge 3\\
                C_{\ref{eqn:nearDiag}} &: N = 0,1,2
		\end{cases}.  
	\end{align}
	This shows \eqref{eqn:HessiangOffDiag}. 
\end{proof}

\begin{remark}[Bounds on the second moment of the entries in the Hessian]
    From the estimates we have, it is easy to obtain the following bounds on the second moment of the entries of the Hessian:
	\begin{align} \label{eqn:HessiangOffDiag_Var}
			 \E_{\param^{*}} \left[ 	\bigg| \frac{\partial^2}{\partial\htheta_e\partial\htheta_f} \ell(\hparam;\sigma|_{L})\bigg|^{2}  \right]
				& \le\begin{cases}      
                      C_{\ref{eqn:HessiangOffDiag_Var}}\delta^{-2} &:\textup{dist}(e,f) = 0,1\\          
                      C_{\ref{eqn:HessiangOffDiag_Var}}\delta^{-3} &:\textup{dist}(e,f) = 2\\          
                      \left( C_{\ref{eqn:HessiangOffDiag_Var}}/\delta \right)^{\lfloor \frac{\textup{dist}(e,f)-3}{4}\rfloor }\delta^{-7} &: \textup{dist}(e,f)\ge 3\\  
				\end{cases}
	   \end{align}
Indeed, for $N\ge 3$, by Lemmas \ref{lem:4terms} and \ref{lem:5terms},
	\begin{align} 
  \E_{\param^{*}} \left[ 	\bigg| \frac{\partial^2}{\partial\htheta_e\partial\htheta_f} \ell(\hparam;\sigma|_{L})\bigg|^{2}  \right]
& \le \E_{\param^{*}}\left[ \widetilde{W}_N^2 {W_{N-4}^2W_{N-8}^2\dotsm W_{r+3}^2}R_r^2\right] 
  \le \left(\frac{C_{\ref{eqn:4terms}}^3}{\delta}\right)^{\lfloor \frac{N-3}{4}\rfloor} \frac{C_{\ref{eqn:5termVar}}^2}{\delta^7}.\nonumber
	\end{align}
	This shows \eqref{eqn:HessiangOffDiag_Var} for $N\ge 3$. Equation \eqref{eqn:nearDiag2} in Lemma \ref{lem:nearDiagonal} gives precisely the bounds \eqref{eqn:HessiangOffDiag_Var} for $N = 0,1,2$.  
\end{remark}

Next, we deduce Corollary \ref{cor:MLE_landscape}  from  Theorem \ref{thm:DiagDominant} and Gershgorin's circle theorem.

\begin{proof}[\textbf{Proof of Corollary} \ref{cor:MLE_landscape}]
	Let $\mathbf{H}=\mathbf{H}(\hparam)$ denote the expected Hessian at a parameter $\hparam$ satisfying Assumption \ref{assumption1} (see \eqref{eq:def_pop_hessian}). Its size is $|E|\times |E|$, where $|E|$ is the number of edges in the tree $T$. By Theorem \ref{thm:DiagDominant}, 
	we have that for edge $e$, 
	\begin{align}
 \nonumber 
		-\frac{\widetilde{C}_{\ref{eqn:HessianDiag}}}{\delta}\le    \mathbf{H}_{e,e} \le -\frac{C_{\ref{eqn:HessianDiag}}}{\delta}. 
	\end{align}
	Furthermore, since 
	there are at most $2^{k+1}$ many edges at distance $k$ from an edge $e_{i}$, \eqref{eqn:HessiangOffDiag} in Theorem \ref{thm:DiagDominant} yields that, whenever  $\delta$ is small enough so that 
	$16 C_{\ref{eqn:HessiangOffDiag}}\delta < 1/2$, for each edge $e$ fixed, 
	\begin{align}
 \nonumber 
		\sum_{f\neq e}|\mathbf{H}_{e,f}|\le \sum_{k=0}^{\infty} 2^{k+1}(C_{\ref{eqn:HessiangOffDiag}}\delta)^{ \lfloor \frac{(k-1)\lor 0}{4}\rfloor  } \le 2 + 4 \sum_{k=0}^{\infty} (16  C_{\ref{eqn:HessiangOffDiag}}   \delta)^{k/4}  \le 2 + \frac{4}{1-2^{-1/4}}<26.
	\end{align}
	Thus we deduce 
	\begin{align}\label{eq:expected_hessian_eval_bd}
		-\frac{\widetilde{C}_{\ref{eqn:HessianDiag}}}{\delta} -26 \le 	\mathbf{H}_{e,e} - \sum_{f\ne e} |\mathbf{H}_{e,f}|  \le 	-\frac{C_{\ref{eqn:HessianDiag}}}{\delta} + 26 \quad \textup{for all edge $e$ in $T$}.  
	\end{align}
	Recall that by Gershgorin's circle theorem, the eigenvalues of a square matrix are contained in the union of all disks centered at the diagonal entries with radius the absolute sum of the off-diagonal entries in the corresponding rows. Since the Hessian is real  symmetric, it has real eigenvalues. Then \eqref{eq:expected_hessian_eval_bd} yields the desired bounds on the eigenvalues of $\mathbf{H}$ in \eqref{eq:expected_Hessian_eval_range}. 
\end{proof}

\section{Proof of Key Technical Lemmas}

In this section, we will prove Lemmas \ref{lem:4terms}, \ref{lem:5terms}, and \ref{lem:nearDiagonal}, which were used to prove Theorem \ref{thm:DiagDominant}  in Section \ref{sec:Hessian_off_diag_pf}.

    For notational simplicity, we will prove Lemmas \ref{lem:4terms} and \ref{lem:5terms} for the case of $W_{4}$ and $\widetilde{W}_{4}$,  respectively. By shifting the index, we can conclude Lemmas \ref{lem:4terms} and \ref{lem:5terms} for the general case. We will also write $\xi_j$ instead of $\xi_j^\circ$
    so that the relevant variables are $\xi_1,\dotsm,\xi_4$ and $\eta_1,\dotsm,\eta_4$ (see Figure \ref{fig:Tree4Terms1}). More precisely, these variables are 
    \begin{align}\label{eq:W4_W4_tilde}
     W_{4}&=\sup_{|\xi_1|\le 1-2c_{\ref{eqn:pHatBounds}}\delta}  \prod_{j=1}^4 \frac{(1-\eta_j^2)}{(1+\xi_j \eta_j)^2}, \\
    \nonumber  \widetilde{W}_4 &= \sup_{|\xi_1|\le 1-2c_{\ref{eqn:pHatBounds}}\delta} \frac{1}{(1+\xi_5\eta_5)^2} \prod_{j=1}^4 \frac{(1-\eta_j^2)}{(1+\xi_j\eta_j)^2}.
    \end{align}

\begin{figure}[b!]
	\centering
	\includegraphics[width=5in]{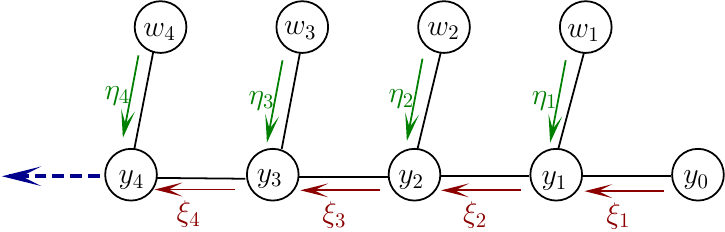}
	\caption{ 
    Signals used to define $W_{4}$ in \eqref{eq:def_W_i}. Note that $W_{4}$ does not depend on $\xi_{1}$ due to the adversarialization. 
    }
	\label{fig:Tree4Terms1}
    \end{figure}

\subsection{Observations about $q$}

The next three claims were established in \cite{clancy2025likelihood} to prove Thm. \ref{thm:Robust}. 
	\begin{claim}[Magnetization recursion: two strong signals]\label{claim:quickBound1}
		Fix any $\eps\in [0,\frac{1}{2})$. If $s,t$ are such that $1-\eps\le s,t\le 1$ then
		\begin{equation*}
			q(s,t) \ge 1-\frac{4}{5}\eps^2\qquad \textup{ and }\qquad q(-s,-t)\le -1+\frac{4}{5}\eps^2.
		\end{equation*}
	\end{claim}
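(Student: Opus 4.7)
The plan is to reduce both inequalities to a single optimization problem and then to bound the optimum using the assumption $\eps < 1/2$. First I would observe the antisymmetry
\begin{equation*}
q(-s,-t) = \frac{-s-t}{1+(-s)(-t)} = -\frac{s+t}{1+st} = -q(s,t),
\end{equation*}
so the second bound follows immediately from the first by negating. Thus it suffices to show $q(s,t) \ge 1-\tfrac{4}{5}\eps^{2}$ whenever $s,t \in [1-\eps,1]$.

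Next, I would parametrize $s = 1-a$, $t = 1-b$ with $a,b \in [0,\eps]$. A short algebraic manipulation gives
\begin{equation*}
1 - q(s,t) = 1 - \frac{2-a-b}{2-a-b+ab} = \frac{ab}{2-a-b+ab},
\end{equation*}
which is nonnegative and, crucially, vanishes when either $a$ or $b$ is zero. The main task is now to maximize the right-hand side over $(a,b)\in[0,\eps]^{2}$.

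For the maximization, I would argue in two steps. Holding $a+b$ fixed, the numerator $ab$ is maximized on the diagonal $a=b$ by AM--GM, while the denominator depends only on $a+b$ and $ab$ and is increasing in $a+b$ for fixed $ab$; combining these observations I would then reduce to the one-variable problem of maximizing $f(u) := u^{2}/(1+(1-u)^{2})$ on $u \in [0,\eps]$. A straightforward derivative computation yields $f'(u) = 2u(2-u)/(1+(1-u)^{2})^{2} \ge 0$ on $[0,1]$, so the maximum is attained at $u=\eps$ and equals $\eps^{2}/(1+(1-\eps)^{2})$.

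Finally, I would use the hypothesis $\eps < 1/2$, which gives $(1-\eps)^{2} > 1/4$, hence $1+(1-\eps)^{2} > 5/4$, so
\begin{equation*}
1 - q(s,t) \le \frac{\eps^{2}}{1+(1-\eps)^{2}} < \frac{4}{5}\eps^{2},
\end{equation*}
establishing the first inequality and therefore, by the antisymmetry noted above, the second. The only step requiring any real care is the two-variable maximization, and even there the argument is an elementary AM--GM plus a one-variable monotonicity check; I do not anticipate any genuine obstacle beyond the bookkeeping needed to verify that the $4/5$ constant is just barely large enough to absorb the denominator on the whole range $\eps \in [0,1/2)$.
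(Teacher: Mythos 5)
This claim is not proved in the paper itself --- it is imported from \cite{clancy2025likelihood} --- so there is no internal proof to compare against; judged on its own, your argument is essentially correct. The antisymmetry $q(-s,-t)=-q(s,t)$, the identity $1-q(1-a,1-b)=\frac{ab}{2-a-b+ab}=\frac{ab}{1+(1-a)(1-b)}$, the computation $f'(u)=2u(2-u)/(1+(1-u)^2)^2\ge 0$, and the final use of $\eps<1/2$ are all right. The one step that is shaky as written is the reduction to the diagonal: you assert the denominator $2-a-b+ab$ is \emph{increasing} in $a+b$ for fixed $ab$ (it is decreasing), and in any case ``numerator maximized at $a=b$'' alone does not justify the reduction, because increasing $ab$ at fixed $a+b$ also increases the denominator. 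The fix is one line: for fixed $S=a+b\le 2\eps<1$ the map $P\mapsto P/(2-S+P)$ is increasing (its derivative is $(2-S)/(2-S+P)^2>0$), so the maximum is indeed attained at $a=b$; alternatively, note that $\frac{ab}{1+(1-a)(1-b)}$ is nondecreasing in each variable separately, so the maximum over the box $[0,\eps]^2$ sits at the corner $(\eps,\eps)$. In fact the whole optimization can be bypassed: simply bound the numerator by $ab\le\eps^2$ and the denominator by $1+(1-a)(1-b)\ge 1+(1-\eps)^2\ge 5/4$, which gives $1-q(s,t)\le\frac{4}{5}\eps^2$ immediately and is the natural one-line proof of this claim; your route buys the slightly sharper bound $\eps^2/(1+(1-\eps)^2)$, which is not needed here.
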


	\begin{claim}[Magnetization recursion: corruption at distance $3$]\label{claim:quickBound2}
		Suppose that $0<a<A$, $0<B$ and $\delta\in [0,1]$ are such that $a<A/2$, $\delta<a/2$, 
        and  $\left(\frac{2 A^2}{a} + B\right)\delta<1/2$.
        If $s_1 \in [-1+a\delta,1]$ and, 
		for each $j\in\{2,3,4\}$ and $i\in\{1,2\}$,
		\begin{equation*}
			s_j\in [1-A\delta,1] \qquad\text{and}\qquad t_i\in[1-B\delta,1].
		\end{equation*}
		        Then we have  \begin{align}
  t_2 q(t_1q(s_1,s_2), s_3)
		\nonumber &\geq 1 - \left(\frac{2 A^2}{a} + B\right)\delta,\\
			q\Bigg( t_2q\Big(t_1q(s_1,s_2)  , s_3\Big), s_4  \Bigg) &\ge 1-\frac{4}{5}\left(\frac{2 A^2}{a} + B\right)\delta^2
			\label{eqn:poly1}.
		\end{align}
	\end{claim}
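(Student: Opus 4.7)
The plan is to exploit the identities $1 - q(x,y) = (1-x)(1-y)/(1+xy)$ and $1 + q(x,y) = (1+x)(1+y)/(1+xy)$ throughout, proving the linear-in-$\delta$ bound by direct calculation and then obtaining the quadratic-in-$\delta$ bound as an immediate consequence of Claim \ref{claim:quickBound1}. The central difficulty is that $s_1$ may lie near $-1$, so $u := q(s_1, s_2)$ can be substantially negative even though $s_2, s_3, s_4$ are strong signals; the assumption $a < A/2$ is what keeps $1 + u$ bounded away from zero by a quantity depending only on $a, A$ and independent of $\delta$.

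For the first inequality, I would first observe that coordinate monotonicity of $q$ (via $\partial_{s_1} q = (1 - s_2^2)/(1+s_1 s_2)^2 \geq 0$) locates the minimum of $u = q(s_1, s_2)$ at the corner $(-1 + a\delta, 1 - A\delta)$. The identity for $1 + q$ then gives
\[
1 + q_{\min} = \frac{a(2 - A\delta)}{(a + A) - aA\delta},
\]
and the hypothesis $(2A^2/a + B)\delta < 1/2$ combined with $a < A/2$ forces $A\delta < 1/8$, so this quantity is bounded below by an explicit positive constant depending only on $a, A$. Since $t_1, s_3 \in (0,1]$ and these factors pull any negative value toward zero, both $t_1 u \geq q_{\min}$ and $t_1 u s_3 \geq q_{\min}$, so $1 - t_1 u \leq 1 - q_{\min}$ and $1 + t_1 u s_3 \geq 1 + q_{\min}$.

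Substituting these bounds into the identity for $1 - q$ yields
\[
1 - q(t_1 u, s_3) \leq \frac{A\delta \cdot (1 - q_{\min})}{1 + q_{\min}} = A\delta \cdot \frac{A(2 - a\delta)}{a(2 - A\delta)} \leq \frac{16 A^2}{15 a}\delta,
\]
where the last step uses $A\delta < 1/8$. Multiplying by $t_2 \geq 1 - B\delta$ and discarding the nonnegative $O(\delta^2)$ cross term gives $t_2 q(t_1 u, s_3) \geq 1 - (16A^2/(15a) + B)\delta \geq 1 - (2A^2/a + B)\delta$, since $16/15 < 2$. This is the first inequality. For the second inequality, set $\epsilon := (2A^2/a + B)\delta < 1/2$: the first inequality places the first argument of the outer $q$ in $[1 - \epsilon, 1]$, and since $a < A/2$ gives $A \leq 2A^2/a$, we also have $s_4 \in [1 - A\delta, 1] \subseteq [1 - \epsilon, 1]$; Claim \ref{claim:quickBound1} then yields the stated quadratic-in-$\delta$ bound. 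The main obstacle in this whole argument is the sign-bookkeeping in the factors $1 - t_1 u$ and $1 + t_1 u s_3$ when $u$ is negative; once those monotonicity bounds are carefully set up, the rest is straightforward manipulation of the $1 \pm q$ identities.
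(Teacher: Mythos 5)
The paper itself never proves Claim \ref{claim:quickBound2} --- it is imported from \cite{clancy2025likelihood} --- so there is no internal proof to compare against; judged on its own, your argument for the first inequality is correct and complete. Coordinatewise monotonicity of $q$ does place the minimum of $u=q(s_1,s_2)$ at the corner $(-1+a\delta,\,1-A\delta)$, your formulas $1+q_{\min}=\frac{a(2-A\delta)}{(a+A)-aA\delta}$ and $\frac{1-q_{\min}}{1+q_{\min}}=\frac{A(2-a\delta)}{a(2-A\delta)}$ check out, the hypotheses do force $A\delta<1/8$, the sign bookkeeping $t_1u\ge q_{\min}$ and $t_1us_3\ge q_{\min}$ is valid, and the chain $1-q(t_1u,s_3)\le A\delta\cdot\frac{A(2-a\delta)}{a(2-A\delta)}\le\frac{16A^2}{15a}\delta$, followed by multiplication by $t_2\ge 1-B\delta$, gives the first display.

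The one genuine problem is your last sentence. Applying Claim \ref{claim:quickBound1} with $\varepsilon=(\tfrac{2A^2}{a}+B)\delta$ gives $q\ge 1-\frac{4}{5}\varepsilon^2=1-\frac{4}{5}\big(\tfrac{2A^2}{a}+B\big)^2\delta^2$, i.e.\ the constant appears \emph{squared}, which is not the bound displayed in \eqref{eqn:poly1}. And the displayed bound, with $\tfrac{2A^2}{a}+B$ to the first power, is in fact false in general: take $a=1$, $A=10$, $B=10^{-4}$, $\delta=10^{-3}$, $s_1=-1+a\delta$, $s_2=s_3=s_4=1-A\delta$, $t_1=t_2=1-B\delta$; then the left side of \eqref{eqn:poly1} is about $0.99949$, whereas $1-\frac{4}{5}\big(\tfrac{2A^2}{a}+B\big)\delta^2\approx 0.99984$. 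So the missing square is almost certainly a typo in the statement (harmless for this paper, which only uses the first, $1-O(\delta)$, conclusion of the claim), but your assertion that Claim \ref{claim:quickBound1} ``yields the stated quadratic-in-$\delta$ bound'' is not accurate as written: you should either flag the discrepancy or prove the corrected form $1-\frac{4}{5}\big((\tfrac{2A^2}{a}+B)\delta\big)^2$; indeed your own estimates, using $1-s_4\le A\delta$ and $1+xs_4\ge\frac{5}{4}$, give the sharper $1-\frac{4}{5}A\big(\tfrac{2A^2}{a}+B\big)\delta^2$.
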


 \begin{claim}\label{claim:strongopposites.}
     Fix any $0<a<A$ and let $\delta>0$ with $A\delta<1$. For any $s,t \in [1-A\delta, 1-a\delta]$ then
     \begin{equation*}
        -1+\frac{a}{A}\le q(s,-t) = q(-s,t) \le 1-\frac{a}{A}
     \end{equation*}
     More generally, if $t\in [1-A\delta, 1-a\delta]$ and $|s|\le 1-a\delta$ then
     \begin{equation*}
         q(t,s) \ge -1 + \frac{a}{A}.
     \end{equation*}
 \end{claim}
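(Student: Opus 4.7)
The plan is to exploit the strict monotonicity of $q(x,y) = (x+y)/(1+xy)$ in each argument on the open square $(-1,1)^2$. A short calculation gives $\partial q/\partial y(x,y) = (1-x^2)/(1+xy)^2 > 0$ whenever $|x| < 1$, and similarly for the other variable. The hypothesis $A\delta < 1$ guarantees $|s|, |t| < 1$ on the relevant domains, so monotonicity applies. In each case the claimed inequality will be reduced to evaluating $q$ at an extremal configuration on the boundary of the parameter box, and then to the single elementary algebraic fact $a(1 - A\delta) \ge 0$.

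For the first assertion, note the identity $q(-s,t) = (t-s)/(1-st) = -q(s,-t)$, so both quantities have the same absolute value and a symmetric two-sided bound for $q(s,-t)$ suffices. Since $q(s,-t) = -q(t,-s)$, the roles of $s$ and $t$ can be swapped, so it is enough to establish the one-sided upper bound $q(s,-t) \le 1-a/A$. Writing $s = 1-x$ and $t = 1-y$ with $x,y \in [a\delta, A\delta]$ yields $q(s,-t) = (y-x)/(x+y-xy)$. Differentiating shows this quantity is strictly decreasing in $x$ and strictly increasing in $y$ on the relevant range (the numerators of the partials work out to $-y(2-y)$ and $x(2-x)$, respectively). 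Hence the maximum on $[a\delta, A\delta]^2$ is attained at $(x,y) = (a\delta, A\delta)$, where the value equals $(A-a)/(A + a(1 - A\delta))$. Since $a(1-A\delta) > 0$ by hypothesis, this is at most $(A-a)/A = 1 - a/A$. The matching lower bound $-1 + a/A$ follows by applying the same estimate with $(s,t)$ swapped and using $q(s,-t) = -q(t,-s)$.

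For the second assertion, monotonicity of $q(t,\cdot)$ (valid since $|t| \le 1 - a\delta < 1$) reduces the problem to the extremal case $s = -(1-a\delta)$. Writing $t = 1 - u$ with $u \in [a\delta, A\delta]$, direct computation gives
\[
q\bigl(t,\,-(1-a\delta)\bigr) \;=\; \frac{a\delta - u}{u(1-a\delta) + a\delta},
\]
whose derivative in $u$ equals $-a\delta(2-a\delta)/\bigl(u(1-a\delta)+a\delta\bigr)^2 < 0$. Hence the infimum over $u \in [a\delta, A\delta]$ is attained at $u = A\delta$, taking the value $-(A-a)/(A + a(1-A\delta))$. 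The required inequality $-(A-a)/(A + a(1-A\delta)) \ge -(A-a)/A = -1 + a/A$ reduces, after cross-multiplying by positive quantities, to $a(1 - A\delta) \ge 0$, which is immediate from $A\delta < 1$.

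The proof has no deep obstacle; the work is essentially bookkeeping. The only subtleties are (i) keeping track of the sign symmetries of $q$ so as to collapse four boundary cases to one, and (ii) staying within the region $|x|, |y| < 1$ where the monotonicity argument is valid. Both rely on the single hypothesis $A\delta < 1$, which then re-appears as the final algebraic check $a(1-A\delta) \ge 0$.
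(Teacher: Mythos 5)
Your proof is correct, and the paper itself does not supply a proof of this claim (it is cited as established in a prior work), so there is nothing internal to compare against.

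A few small observations worth noting. First, you implicitly (and correctly) detected that the claim's displayed ``$q(s,-t)=q(-s,t)$'' is not a pointwise identity: in fact $q(-s,t)=-q(s,-t)$ for each fixed $(s,t)$. Since the stated two-sided bound is symmetric about $0$, establishing it for $q(s,-t)$ automatically covers $q(-s,t)$, which is how you proceed; it would have been worth flagging this explicitly. Second, I verified your partials: with $f(x,y)=(y-x)/(x+y-xy)$ one indeed gets numerators $-y(2-y)$ and $x(2-x)$ for $\partial_x f$ and $\partial_y f$, both of the claimed signs on $(0,1)$; and the derivative of $(a\delta-u)/\bigl(u(1-a\delta)+a\delta\bigr)$ in $u$ indeed has numerator $-a\delta(2-a\delta)<0$. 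Third, the reduction of the lower bound to the upper via $q(s,-t)=-q(t,-s)$ and swapping $s\leftrightarrow t$ is valid because $s,t$ range over the same interval. The final algebraic checks, both reducing to $a(1-A\delta)\ge 0$, are correct under $a>0$ and $A\delta<1$. The argument is clean, elementary, and complete.
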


 The following claims will be used several times in the sequel. 
 \begin{claim} \label{claim:qRecurseClaim} Fix a constant $\kappa\in(0,1)$. Suppose that there are $x_1,x_2,\dotsm, x_m\in[-\kappa,1]$ and $t_1,\dotsm, t_m\in[0,1]$. Then there exists another constant $\kappa'\in(0,1)$ depending only on $m$ and $\kappa$ such that $ y_1:= x_1 \ge -\kappa'$ and for all $j = 2,\dotsm, m$
     \begin{equation*}
         y_j := q(x_j, t_j y_{j-1}) \ge -\kappa'.
     \end{equation*}
 \end{claim}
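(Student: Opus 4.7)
The plan is to proceed by induction on $j$, showing that at each step the recursively defined $y_j$ stays bounded below by some $-\kappa_j$ with $\kappa_j \in (0,1)$, and then taking $\kappa' = \max_{1\le j \le m} \kappa_j$. The base case $y_1 = x_1 \ge -\kappa$ is immediate with $\kappa_1 = \kappa$.

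For the inductive step, suppose $y_{j-1}\ge -\kappa_{j-1}$ for some $\kappa_{j-1}\in (0,1)$. First I would observe that $t_j y_{j-1}\ge -\kappa_{j-1}$: if $y_{j-1}\ge 0$ this is trivial since $t_j\ge 0$, and if $y_{j-1}<0$ then $t_j y_{j-1}\ge y_{j-1}\ge -\kappa_{j-1}$ because $t_j\le 1$. Thus setting $a=x_j\in[-\kappa,1]$ and $b = t_j y_{j-1}\in[-\kappa_{j-1},1]$, the pair satisfies $ab\le 1$, so $1+ab\le 2$, and I would use the elementary algebraic identity
\begin{equation*}
q(a,b)+1 \;=\; \frac{a+b+1+ab}{1+ab}\;=\;\frac{(1+a)(1+b)}{1+ab}.
\end{equation*}
Since $1+a\ge 1-\kappa>0$ and $1+b\ge 1-\kappa_{j-1}>0$, this gives
\begin{equation*}
y_j \;=\; q(a,b) \;\ge\; -1 + \frac{(1-\kappa)(1-\kappa_{j-1})}{2}.
\end{equation*}

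I would then define $\kappa_j := 1 - \tfrac{1}{2}(1-\kappa)(1-\kappa_{j-1})$. Each $\kappa_j$ lies in $[\tfrac{1}{2},1)$ because $(1-\kappa)(1-\kappa_{j-1})\in (0,1]$, so $\kappa_j\in(0,1)$ as required. Iterating this recursion a total of $m-1$ times produces a finite sequence $\kappa_1,\ldots,\kappa_m$ depending only on $m$ and $\kappa$, and I would set $\kappa' := \max_{1\le j\le m}\kappa_j \in (0,1)$, which yields the uniform bound $y_j\ge -\kappa'$ for every $j$.

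There is essentially no serious obstacle: the proof is a short induction whose only minor subtlety is verifying that the "input" $t_j y_{j-1}$ to the recursion inherits the same lower bound as $y_{j-1}$ (rather than degrading all the way to $-1$), which is handled by the sign-based case split above. The positivity of $1+a$ and $1+b$, combined with the harmless bound $1+ab\le 2$, does all the work in producing a strict contraction away from $-1$.
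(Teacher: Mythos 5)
Your proof is correct, and it reaches the same overall structure as the paper's (an inductive one-step contraction away from $-1$), but the one-step estimate is obtained differently. The paper simply uses that $q(\cdot,\cdot)$ is non-decreasing in each argument, so that once $t_jy_{j-1}\in[-\kappa_{j-1},1]$ the infimum of $q$ is attained at the corner, giving the sharp one-step value $q(-\kappa_{j-1},-\kappa_{j-1})=\frac{-2\kappa_{j-1}}{1+\kappa_{j-1}^2}\in(-1,0)$, and then inducts; you instead use the algebraic identity $1+q(a,b)=\frac{(1+a)(1+b)}{1+ab}$ together with the crude bound $1+ab\le 2$, which gives a weaker but equally sufficient lower bound $y_j\ge -1+\tfrac{1}{2}(1-\kappa)(1-\kappa_{j-1})$. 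Your route buys an explicit quantitative recursion for $\kappa_j$ without invoking monotonicity; the paper's buys the sharp corner value in one line. One small point you should make explicit: to pass from the identity to the inequality (and indeed for $q(a,b)$ to be well defined) you need $1+ab>0$, which does hold here because $a\ge -\kappa>-1$ and $b\ge -\kappa_{j-1}>-1$ force $ab>-1$; as written you only record $1+ab\le 2$, and the positivity of the denominator is what licenses dividing by at most $2$. With that one-sentence addition the argument is complete.
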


 \begin{proof}
   Note $q(\cdot,\cdot)$ is non-decreasing in both arguments, so 
    \begin{align*}
    \inf\{q(x,ty): x,y\in[ -\kappa,1], t\in[0,1]\} \geq \inf\{q(x,y): x,y\in[-\kappa,1] \}  = \frac{-2\kappa}{1+\kappa^2}\in(-1,0).
    \end{align*}
    The general $m$ case holds by induction.
 \end{proof}

 \begin{claim}\label{claim:infxy}

 		Fix any $\kappa \in(0,1)$. Then
\begin{equation*}\label{eqn:infxy}
\inf_{x,y\in [-\kappa,1]} 1+xy = 1-\kappa.
		\end{equation*}
\end{claim}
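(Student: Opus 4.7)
The plan is to first observe that the claim reduces to showing $\inf_{x,y\in[-\kappa,1]} xy = -\kappa$, since $1+xy$ is an order-preserving affine function of $xy$. The continuous function $(x,y)\mapsto xy$ attains its infimum on the compact square $[-\kappa,1]^2$, so the infimum is really a minimum, and the problem becomes an elementary constrained optimization.

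I would split into two cases based on the sign of $xy$. If $xy\ge 0$, then trivially $xy\ge 0 > -\kappa$. If $xy < 0$, then exactly one of $x,y$ is strictly positive and the other strictly negative; by symmetry of $xy$ in the two arguments, assume $x \in (0,1]$ and $y\in[-\kappa,0)$. Then
\begin{equation*}
xy \ge x\cdot(-\kappa) = -\kappa x \ge -\kappa,
\end{equation*}
using $x\le 1$ in the last step. This gives $1+xy\ge 1-\kappa$ throughout $[-\kappa,1]^2$.

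To see that the bound is sharp, evaluate at the corner $(x,y) = (1,-\kappa)$ (or equivalently $(-\kappa,1)$), where $1+xy = 1-\kappa$ exactly. Combining the lower bound with this attained value gives $\inf_{x,y\in[-\kappa,1]}(1+xy) = 1-\kappa$, as claimed. No step is technically difficult; the only thing to be careful about is restricting attention to the mixed-sign case, since the same-sign case already gives $1+xy\ge 1$, which is strictly larger than $1-\kappa$ and so does not govern the infimum.
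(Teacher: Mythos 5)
Your proof is correct and is essentially the same case analysis as the paper's: you split by the sign of $xy$, reduce the mixed-sign case by symmetry to $x>0,\ y<0$, and use $xy\ge -\kappa x\ge -\kappa$, then check attainment at $(1,-\kappa)$. The paper instead cases on the sign of $x$ (then $y$), computing $\inf_{y}(1+xy)=1-\kappa x$ for $x\ge 0$, but the underlying estimate and conclusion are identical.
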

\begin{proof}
   If $x\in [0,1]$, then $\inf_{y\in[-\kappa,1]} 1+xy = 1-\kappa x\ge 1-\kappa$. By symmetry, the same holds if $y\in[0,1]$. Finally, if $x,y\in[-\kappa,0)$, then $1+xy\ge1$. 
\end{proof}

 \subsection{General preliminary bounds} 
	
	We first establish some elementary facts that will be useful in our subsequent analysis. 

It is natural to analyze some of the algebraic properties of the recursion $q$ in \eqref{eq:q_def_recursion}. This next lemma underpins some subsequent bounds.
    \begin{lemma}[Reversing the recursion] \label{lem:swap} Let $\xi_1,\eta_1\in(-1,1)$ and $\eta_2\in[-1,1]$ and $\htheta_1\in(0,1)$. Let $\xi_2:= \htheta_1 q(\eta_1,\xi_1) = \htheta_1 \frac{\eta_1+\xi_1}{1+\eta_1\xi_1}$. Then the following hold:
    \begin{description}
        \item[\textbf{(i)}] If $\tilde{\eta}_2 = \eta_2$ and $\tilde{\eta}_1 = q(\htheta_1\tilde{\eta}_2, \eta_1)$ then 
        \begin{equation*}
            (1+\xi_2\eta_2)(1+\xi_1\eta_1) = (1+\theta_1 \eta_1\tilde{\eta}_2)(1+\xi_1\widetilde{{\eta}}_1).
        \end{equation*}
        \item[\textbf{(ii)}] If $\tilde{\eta}_2 = \eta_2$ and $\tilde{\eta}_1 = q(\htheta_1\tilde{\eta}_2, \eta_1)$ then
        \begin{equation*}
            {(1+\htheta_1 \eta_1\widetilde{{\eta}}_2)^2 (1-\tilde{\eta}_1^2)} = (1-{\eta}_1^2)({1-(\htheta_1\tilde{\eta}_2)^2})
        \end{equation*}
        and, by rearranging, 
        \begin{equation*}
            \frac{1-{\eta}_1^2} {(1+\htheta_1 \eta_1\widetilde{{\eta}}_2)^2 (1-\tilde{\eta}_1^2)} = \frac{1}{1-(\htheta_1\tilde{\eta}_2)^2}
        \end{equation*}
    \end{description}
    \end{lemma}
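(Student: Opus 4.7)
Both parts are really algebraic identities about $q(a,b) = \frac{a+b}{1+ab}$, and the plan is to verify them by direct expansion. The only thing to keep in mind is that $\tilde{\eta}_2 = \eta_2$ throughout, so the tilde on $\eta_2$ is just a bookkeeping device indicating that it gets ``processed first'' in the reversed recursion; the meat of the statement concerns how $\xi_2,\tilde{\eta}_1$ compare to the original data.

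For part \textbf{(i)}, I would start from the left-hand side and plug in $\xi_2 = \htheta_1(\eta_1+\xi_1)/(1+\eta_1\xi_1)$. The factor $(1+\xi_1\eta_1)$ on the left cancels the denominator inside $\xi_2$, leaving
\begin{equation*}
(1+\xi_2\eta_2)(1+\xi_1\eta_1) = (1+\xi_1\eta_1) + \htheta_1\eta_2(\eta_1+\xi_1) = (1+\htheta_1\eta_1\eta_2) + \xi_1(\eta_1 + \htheta_1\eta_2).
\end{equation*}
On the other side, I would unfold $\tilde{\eta}_1 = q(\htheta_1\tilde{\eta}_2,\eta_1) = (\htheta_1\tilde{\eta}_2+\eta_1)/(1+\htheta_1\tilde{\eta}_2\eta_1)$; the factor $(1+\htheta_1\eta_1\tilde{\eta}_2)$ clears its denominator, giving exactly the same expression (using $\tilde{\eta}_2=\eta_2$). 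So (i) reduces to a one-line symmetric expansion.

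For part \textbf{(ii)}, the cleanest route is to record once and for all the identity
\begin{equation*}
1 - q(a,b)^2 = \frac{(1-a^2)(1-b^2)}{(1+ab)^2},
\end{equation*}
which follows from the factorization $(1+ab)^2 - (a+b)^2 = (1-a^2)(1-b^2)$. Applying this with $a = \htheta_1\tilde{\eta}_2$ and $b = \eta_1$, and multiplying through by $(1+\htheta_1\eta_1\tilde{\eta}_2)^2$, gives the claimed equality immediately. The rearranged form then follows by dividing, provided $\tilde{\eta}_1^2 < 1$, which holds because $|\eta_1|,|\htheta_1\tilde{\eta}_2| < 1$ under the hypotheses of the lemma (so the denominator $1-\tilde{\eta}_1^2$ is strictly positive).

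I do not foresee any genuine obstacle here: the lemma is a deterministic algebraic identity and no reconstruction or probabilistic estimates are needed. The only place to be mildly careful is the sign/positivity of $1+\htheta_1 \eta_1 \tilde{\eta}_2$ and $1+\eta_1\xi_1$ when dividing; both are strictly positive under the stated hypotheses $\xi_1,\eta_1\in(-1,1)$, $\eta_2\in[-1,1]$, $\htheta_1\in(0,1)$, so the manipulations are all valid. I would present (i) first as a short computation, then derive (ii) from the boxed identity above.
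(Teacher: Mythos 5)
Your proposal is correct and matches the paper's own argument: part \textbf{(i)} is verified by the same one-line expansion showing both sides equal $(1+\htheta_1\eta_1\eta_2)+\xi_1(\eta_1+\htheta_1\eta_2)$, and part \textbf{(ii)} rests on the same factorization $(1+ab)^2-(a+b)^2=(1-a^2)(1-b^2)$ applied with $a=\htheta_1\tilde{\eta}_2$, $b=\eta_1$. Your additional remark about the strict positivity of the denominators is a harmless (and correct) extra precaution.
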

    \begin{proof}
        For \textbf{(i)}, we have
        \begin{align*}
             (1+\xi_2\eta_2)(1+\xi_1\eta_1) &= \left((1+\xi_1\eta_1) + \htheta_1(\xi_1+\eta_1)\eta_2\right)\\
&= \left(1+\htheta_1\eta_1\eta_2 + \xi_1(\eta_1+\htheta_1\eta_2)\right)
             = (1+\theta_1 \eta_1\tilde{\eta}_2)(1+\xi_1\widetilde{{\eta}}_1).
        \end{align*}

        For \textbf{(ii)} it is similar. To begin note that 
        \begin{equation*}
            (1+ab)^2 -(a+b)^2 = (1-a^2)(1-b^2).
        \end{equation*}Since $\tilde{\eta}_1 = \frac{\htheta_1\tilde{\eta}_2 + \eta_1}{1+\htheta_1\eta_1\tilde{\eta}_2}$ and so 
        \begin{align*}
            (1+\htheta_1\eta_1\tilde{\eta}_2)^2 (1-\tilde{\eta}_1^2) &=  (1+\htheta_1\eta_1\tilde{\eta}_2)^2 - (\htheta_1\tilde{\eta}_2 + \eta_1)^2\\
            &= \left(1-(\htheta_1\tilde{\eta}_2)^2\right)\left(1-\eta_1^2\right).
        \end{align*}
    \end{proof}

Let us recall that $\eta_j = \htheta_j Z_{w_j}$. Since $\htheta_j=1-O(\delta)$ by Assumption \ref{assumption1}, we can replace the magnetization $Z_{w_{j}}$ in the reconstruction theorem (Thm. \ref{thm:Robust}) by the signal $\eta_{j}$, as in the following corollary.
\begin{corollary}\label{cor:eta} 
    There exists a constant $C_{\ref{eqn:etareconstruct}}\ge 4C_{\ref{eqn:pHatBounds}}>0$ such that for all binary trees $T$ and $\delta \le \delta_{\ref{eqn:Reconstruct}}$ the following hold. Suppose Assumption \ref{assumption1} holds, and let $\eta_j$ be as in \eqref{eq:def_eta_xi}. Then a.s. $|\eta_j|\le 1-2c_{\ref{eqn:pHatBounds}}\delta$ for $j\neq N+1$ and for all $j = 0,1,2,\dotsm, N+1$
    \begin{align}
       \label{eqn:etareconstruct} &\PR_{\param^{*}}\left(\sigma_{w_j} \eta_j \ge 1- C_{\ref{eqn:etareconstruct}}\delta \right) \ge 1-c_{\ref{eqn:Reconstruct}} \delta
    & &\PR_{\param^{*}}\left(\sigma_{w_j} \eta_j <-c_{\ref{eqn:antiReconstruction}}\right) \le c_{\ref{eqn:antiReconstruction}} \delta^2.
    \end{align} Moreover, the bounds in \eqref{eqn:etareconstruct} hold for $\sigma_v\xi_0$ replacing $\sigma_{w_j}\eta_j$. 
\end{corollary}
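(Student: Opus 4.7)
The plan is to reduce everything to Theorem \ref{thm:Robust} applied to the appropriate descendant subtrees, absorbing the multiplicative factor $\hat\theta$ into the constants.

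First, I would handle the almost-sure bound. For $j \in \{0,1,\dots,N\}$ we have $\eta_j = \hat\theta_{\{y_j,w_j\}} Z_{w_j}$, where $|Z_{w_j}| \le 1$ by Definition \ref{def:magnetization} and $\hat\theta_{\{y_j,w_j\}} \le 1 - 2c_{\ref{eqn:pHatBounds}}\delta$ by \eqref{eqn:pHatBounds} in Assumption \ref{assumption1}. Hence $|\eta_j| \le 1 - 2c_{\ref{eqn:pHatBounds}}\delta$ almost surely. The same argument gives $|\xi_0| = |\hat\theta_{-1} Z_{y_{-1}}| \le 1 - 2c_{\ref{eqn:pHatBounds}}\delta$ almost surely. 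Note that the case $j = N+1$ is excluded precisely because $\eta_{N+1} = Z_x$ is a bare magnetization with no $\hat\theta$ prefactor and can take values arbitrarily close to $\pm 1$.

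Next I would handle the upper-tail bound. For $j \in \{0,\dots,N\}$, the event $\{\sigma_{w_j} Z_{w_j} \ge 1 - C_{\ref{eqn:Reconstruct}}\delta^2\}$ has probability at least $1 - c_{\ref{eqn:Reconstruct}}\delta$ by Theorem \ref{thm:Robust}\textbf{(i)} applied to the descendant subtree $T_{w_j}$. On this event, since $\hat\theta_{\{y_j,w_j\}} \ge 1 - 2C_{\ref{eqn:pHatBounds}}\delta$,
\begin{align*}
\sigma_{w_j}\eta_j = \hat\theta_{\{y_j,w_j\}}\,\sigma_{w_j}Z_{w_j} \ge (1 - 2C_{\ref{eqn:pHatBounds}}\delta)(1 - C_{\ref{eqn:Reconstruct}}\delta^2) \ge 1 - (2C_{\ref{eqn:pHatBounds}} + C_{\ref{eqn:Reconstruct}}\delta)\delta,
\end{align*}
and for $\delta$ below a threshold depending on $C_{\ref{eqn:pHatBounds}}$ and $C_{\ref{eqn:Reconstruct}}$, the coefficient is at most $4C_{\ref{eqn:pHatBounds}}$, so one can take $C_{\ref{eqn:etareconstruct}} = 4C_{\ref{eqn:pHatBounds}}$. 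For $j = N+1$, one applies Theorem \ref{thm:Robust}\textbf{(i)} directly to $Z_x$ (with no $\hat\theta$ factor), and the bound $1 - C_{\ref{eqn:Reconstruct}}\delta^2 \ge 1 - 4C_{\ref{eqn:pHatBounds}}\delta$ holds automatically for small $\delta$. The same reasoning gives the claim for $\sigma_v\xi_0 = \hat\theta_{-1}\sigma_v Z_{y_{-1}}$.

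Finally, for the lower-tail bound I would observe the containment of events. For $j \in \{0,\dots,N\}$, since $\hat\theta_{\{y_j,w_j\}} \in (0,1]$, if $\sigma_{w_j}\eta_j < -c_{\ref{eqn:antiReconstruction}}$ then $\sigma_{w_j}Z_{w_j} < -c_{\ref{eqn:antiReconstruction}}/\hat\theta_{\{y_j,w_j\}} \le -c_{\ref{eqn:antiReconstruction}}$, so
\begin{align*}
\{\sigma_{w_j}\eta_j < -c_{\ref{eqn:antiReconstruction}}\} \subseteq \{\sigma_{w_j}Z_{w_j} \le -c_{\ref{eqn:antiReconstruction}}\}.
\end{align*}
Applying Theorem \ref{thm:Robust}\textbf{(ii)} to $Z_{w_j}$ bounds the right-hand probability by $C_{\ref{eqn:antiReconstruction}}\delta^2$; relabeling the constant (or enlarging $c_{\ref{eqn:antiReconstruction}}$ slightly) yields the claimed bound. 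The case $j = N+1$ and the case of $\xi_0$ are identical with $Z_x$ and $Z_{y_{-1}}$ in place of $Z_{w_j}$. No substantive obstacle is anticipated — the corollary is essentially bookkeeping that the $\hat\theta$-prefactor preserves both tails of Theorem \ref{thm:Robust} up to adjusting constants, with the only subtlety being the separate treatment of $\eta_{N+1}$ where the prefactor is absent.
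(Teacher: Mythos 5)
Your proposal is correct and matches the paper's intended argument: the paper states this corollary without a separate proof, treating it as immediate from Theorem \ref{thm:Robust} together with $\hat\theta_{\{y_j,w_j\}} = 1-\Theta(\delta)$ under Assumption \ref{assumption1}, which is exactly the transfer-of-tails bookkeeping you carry out (a.s. bound from $|Z|\le 1$, upper tail by multiplying out the prefactor, lower tail by event containment). Your remark about relabeling the lower-tail constant is also apt, since the $c_{\ref{eqn:antiReconstruction}}\delta^2$ on the right-hand side of \eqref{eqn:etareconstruct} should really be the constant $C_{\ref{eqn:antiReconstruction}}$ from Theorem \ref{thm:Robust}\textbf{(ii)}.
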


 To obtain bounds for products  
 of two consecutive terms in \eqref{eq:Hessian_offdiag_bd}, 
 let us consider the function 
	\begin{equation*}
		F(\htheta_1,\eta_1,\eta_2,\xi_1):=\frac{(1-\eta_1^2)(1-\eta_2^2)}{(1+\xi_2\eta_2)^2(1+\xi_1\eta_1)^2},\qquad \textup{where}\quad \xi_2 = \htheta_1 \frac{\eta_1+\xi_1}{1+\eta_1\xi_1}.
	\end{equation*}
	Recall that $Z_a\in[-1,1]$ a.s. so under Assumption \ref{assumption1} (in particular \eqref{eqn:pHatBounds}), our new random variables $\xi_{j},\eta_{j}$ satisfy \eqref{eqn:xiandetabound}
a.s. and $\htheta_i \in [1-2C_{\ref{eqn:pHatBounds}}\delta,1-2c_{\ref{eqn:pHatBounds}}\delta]$.
	The term $F$ appears when grouping two consecutive terms in the product in \eqref{eqn:hessianTerms}. The following proposition tells us that the worst-case scenario for a product of two terms is $O(\delta^{-1})$ and will be (using Thm. \ref{thm:Robust}) $O(1)$ with probability $1-O(\delta)$. We state the result slightly more generically for some future applications in the proof of Lemma \ref{lem:nearDiagonal} (see \eqref{eq:hessian_off_bd_N1_pf1} in particular).
	\begin{prop}\label{prop:twoTerms}
		Let $A>a>0$ and $B>a$. Suppose that $\htheta_1\in [1-A\delta,1-a\delta]$ and suppose that $B\delta, A\delta<1$.
        Then the following hold:
		\begin{enumerate}
			\item (\textit{Generic signals}) 			\begin{equation*}\label{eqn:2Terms}
				\sup_{|\xi_1|\le 1-a\delta}  F(\htheta_1,\eta_1,\eta_2,\xi_1) \le \frac{16\vee (8/a)}{\delta} \qquad \textup{for all }\eta_{1},\eta_{2} \textup{ s.t. } |\eta_{j}|\le 1-a\delta;
			\end{equation*}
			\item (\textit{Strong signals}) 
			\begin{equation*}\label{eqn:strongEtas}
			\sup_{|\xi_1|\le 1-a\delta} 	F(\htheta_1,\eta_1,\eta_2,\xi_1) \le \frac{4B^4}{a^2(B-a)^2}\qquad \textup{for all }\eta_1,\eta_2\textup{ s.t. }|\eta_j|\in [1-B\delta,1-a\delta].
			\end{equation*}
		\end{enumerate}
	\end{prop}
	\begin{proof}
        Fix any $\xi_1\in[-1+a\delta,1-a\delta]$. 
		First rewrite $F = F(\htheta_1,\eta_1,\eta_2,\xi_1)$ as 
		\begin{equation}
			F = \frac{(1-\eta_1^2)(1-\eta_2^2)}{(1 +\eta_1\xi_1 + \eta_2\eta_1\htheta_1 + \eta_2\htheta_1\xi_1)^2} = \frac{(1-\eta_1^2)(1-\eta_2^2)}{\displaystyle (1+\htheta_1\eta_1\eta_2)^2\left(1+\xi_1 \frac{\htheta_1\eta_2 +\eta_1}{1+\htheta_1\eta_1\eta_2} \right)^2}.\label{eqn:productDenom}
		\end{equation}
		Observe the denominator is $(1+\htheta_1\eta_1\eta_2)^2(1+\xi_1 q(\htheta_1\eta_2,\eta_1))^2$. 
		Note that if $x\in [-1+a\delta , 1-a\delta]$ and $y\in[-1,1]$ then $(1+xy)^2 \ge (1-(1-a\delta) |y|)^2$ and the equality can always be achieved. We also always have $1-\eta_2^2 \le 1-(\htheta_1 \eta_2)^2$ since $\htheta_1\in(0,1)$ and $\eta_2\in [-1,1]$. It then follows that
		\begin{equation*}
			F \le \frac{(1-\eta_1^2)(1-(\htheta_1\eta_2)^2)}{\displaystyle(1+\htheta_1\eta_1\eta_2)^2\left(1-(1-a\delta) \frac{|\htheta_1\eta_2+\eta_1|}{|1+\htheta_1\eta_1\eta_2|}\right)^2 }.
		\end{equation*}
		Let us define the function 
		\begin{equation*}
			G(x,y) = G(-x,-y) = G(y,x) =  \frac{(1-x^2)(1-y^2)}{\displaystyle(1+xy)^2\left(1-(1-a\delta)\frac{|x+y|}{|1+xy|}\right)^2}
		\end{equation*} To show part (1), it suffices to show
		\begin{equation}\label{eq:G_x_y_claim}
			\sup_{|x|,|y|\le 1-a\delta} G(x,y) \le \frac{16 \vee (8/a)}{\delta}.
		\end{equation}
		Let us first consider $x,y\in [0,1-a\delta]$. 
		This implies both $(1+x),(1+y)\le 2$ and $(1-x),(1-y)\ge 0$, and so $(1-x^2)(1-y^2)\le 2^2 (1-x)(1-y)$. It follows 
		\begin{align*}
			G(x,y)&\le \frac{(1-x^2)(1-y^2)}{\big(1+xy - x -y + a\delta (x+y)\big)^2}= \frac{(1-x^2)(1-y^2)}{\Big((1-x)(1-y) + a\delta (x+y)\Big)^2}\\
			&\le \frac{4(1-x)(1-y)}{\Big((1-x)(1-y) + a\delta (x+y)\Big)^2}\le \frac{4}{(1-x)(1-y)+ a\delta(x+y)} \le \begin{cases}
				16 &:x,y\le \frac{1}{2}\\
				\frac{8}{a\delta} &: \textup{otherwise.}
			\end{cases}
		\end{align*}
	Indeed, the last bound holds as follows from observing that 
 \begin{equation*}
     (1-x)(1-y)+a\delta(x+y) \ge \begin{cases}
     \frac{1}{2}\frac{1}{2} +0 &: \textup{ both } x,y\le \frac12\\
     0 + a\delta\left(\frac{1}{2}\right) &: \textup{ else}
     \end{cases}.
 \end{equation*}
		A similar analysis when $x,y\le 0$ implies that
		\begin{equation*}
			\sup\left\{ G(x,y): \substack{\displaystyle\textup{sgn}(x)=\textup{sgn}(y),\\ \displaystyle |x|,|y|\le 1-a\delta} \right\}\le \left(16\vee \frac{8}{a}\right) \delta^{-1}.
		\end{equation*}
		Let us now consider the case where $1-a\delta\ge x\ge y>0$ and bound $G(x,-y) = G(-x,y)$:
		\begin{align*}
			G(x,-y) &= \frac{(1-x^2)(1-y^2)}{\displaystyle(1-xy)^2\left(1-(1-a\delta) \frac{x-y}{1-xy} \right)^2}= \frac{(1-x^2)(1-y^2)}{\left(1-xy - x+y + a\delta (x-y)\right)^2}\\
			&= \frac{(1-x^2)(1-y^2)}{\left((1-x)(1+y) +a\delta (x-y)\right)^2} \\
			&=  \frac{(1+x)(1-y)}{(1-x)(1+y) + a\delta(x-y)}\,\, \frac{(1-x)(1+y)}{(1-x)(1+y) + a\delta(x-y)}\\
			&\le \frac{(1+x)(1-y)}{(1-x)(1+y) + a\delta(x-y)}\\
			&\le \frac{4}{a\delta} \le \frac{8/a}{\delta}.
		\end{align*} The second to last inequality uses $(1+x)(1-y)<4$  
        since $x,y\in(0,1)$ to bound the numerator and $(1-x)\ge a\delta$, $(1+y)\ge 1$ and $a\delta(x-y)>0$ to bound the denominator. This verifies \eqref{eq:G_x_y_claim}. 
		
		We now turn to the second bound. From equation \eqref{eqn:productDenom},
		\begin{equation}
  \nonumber
			\sup\left\{F(\htheta_1,\eta_1,\eta_2,\xi_1): \substack{\displaystyle \eta_1,\eta_2\in [1-B\delta, 1-a\delta]\\
				\displaystyle\htheta_1\in [1-A\delta, 1-a\delta] \\\displaystyle
				\xi_1\in [-1+a\delta, 1-a\delta]}\right\} \le \frac{(2B\delta)^2}{(a\delta)^2}\le \frac{4B^4}{a^2(B-a)^2}
		\end{equation}
        since $1-\eta_j^2 \le 1-(1-B\delta)^2\le 2B\delta$ and $q(\eta_1,\htheta_1\eta_2)\in[0,1]$ implies that $1+\xi_1q(\eta_1,\htheta_1\eta_2) \ge a\delta$. One can similarly obtain the same upper bound when both $\eta_1,\eta_2\in[-1+a\delta,-1+B\delta]$. 
        
		When $\eta_1,\eta_2$ are of opposite signs, we can see that by Claim \ref{claim:strongopposites.},
		\begin{align*}
			\sup\left\{ \left|q(x,y) \right| : \substack{
				\displaystyle |x|,|y|\in [1-B \delta, 1-a\delta]
				\displaystyle,\,\,\textup{sgn}(x)\neq \textup{sgn}(y)
			}\right\} 
			&\le  1- \frac{a}{B}\in(0,1),\label{eqn:twoStrongOPp}
		\end{align*}
        where the last equality holds provided that $B \delta< 1.$ This implies that $\xi_2\in[-1+a/B,1-a/B]$ and so $(1+\xi_2\eta_2)^2 \ge (1-a/B)^2$ by 
        Claim \ref{claim:infxy}. Since we have the generic bound $(1+\xi_1\eta_1)^2\ge (a\delta)^2$ we have
		\begin{align*}
			\sup\left\{F(\htheta_1,\eta_1,\eta_2,\xi_1): \substack{\displaystyle |\eta_1|,|\eta_2|\in [1-B\delta, 1-a\delta]\\
				\displaystyle\htheta_1\in [1-a\delta, 1-a\delta] \\\displaystyle
				\xi_1\in [-1+a\delta, 1-a\delta]\\
				\displaystyle\textup{sgn}(\eta_1)\neq \textup{sgn}(\eta_2)}\right\} \le \frac{(2B\delta)^2}{(a\delta)^2\left(1- a/B\right)^2}  = \frac{4B^4}{a^2 (B-a)^2}.
		\end{align*} 
	\end{proof}

 The next result is a useful bound analogous to Lemma \ref{lem:swap}.
	\begin{prop}\label{prop:4terms_upper_bd}
		Let $\eta_j, \xi_j$ satisfy the recursions in \eqref{eq:def_eta_xi}. Denote $\tilde{\eta}_{4}:=\eta_{4}$, $\tilde{\eta}_{3}:=q(\htheta_3\eta_4,\eta_3)$ and $\tilde{\eta}_{2}:=q(\htheta_2\tilde{\eta}_3,\eta_2)$. Under Assumption \ref{assumption1}, we have 
\begin{align}
\nonumber
  W_{4}
  \le \frac{1}{(2 c_{\ref{eqn:pHatBounds}}\delta)^2 } \frac{\prod_{j=1}^4 (1-\eta_j^2)}{\prod_{j=1}^3 (1+\htheta_j\eta_j{\tilde{\eta}_{j+1}})^2}.
    \end{align}
	\end{prop}

	\begin{proof} By repeatedly applying Lemma \ref{lem:swap}, we see that 
 \begin{align*}
     &(1+\xi_4\eta_4)(1+\eta_3\xi_3) = (1+\htheta_3 \eta_3 \tilde{\eta}_4)(1+\xi_3\tilde{\eta}_3)\\
     &(1+\xi_3\tilde{\eta}_3)(1+\xi_2\eta_2) = (1+\htheta_2 \eta_2 \tilde{\eta}_3)(1+\xi_2\tilde{\eta}_2)\\
     &(1+\xi_2\tilde{\eta}_2)(1+\xi_1\eta_1) = (1+\htheta_1 \eta_1 \tilde{\eta}_2)(1+\xi_1\tilde{\eta}_1)
 \end{align*} where $\tilde{\eta}_1 = q(\htheta_1 \tilde{\eta}_2,\eta_1)$. Hence
\begin{equation*}\label{eqn:lowerBoundDenom4terms}
	\prod_{j=1}^4 (1+\eta_j\xi_j)  = {\left( \prod_{j=1}^3 (1+\htheta_j\eta_j\tilde{\eta}_{j+1})\right) }{\left(1+\xi_1 \tilde{\eta}_1\right)}.
\end{equation*}
Now it suffices to see that the term $1+\xi_1\widetilde{\eta_1}\ge 2c_{\ref{eqn:pHatBounds}}\delta$. Indeed, recall that  $\xi_{1}, \eta_j\in[-1+2c_{\ref{eqn:pHatBounds}}\delta,1-2c_{\ref{eqn:pHatBounds}}\delta]$ by \eqref{eqn:pHatBounds} in Assumption \ref{assumption1}. 
Also, $q(x,y)\in[-1,1]$ for all $x,y\in(-1,1)$. Hence  $1+\xi_1 q(x,y)\ge 2c_{\ref{eqn:pHatBounds}}\delta$ for all $x,y\in(-1,1)$. 
	\end{proof}
We will also need a similar representation when dealing with $\widetilde{W}_4$. We include this as a lemma. It follows from repeated applications of Lemma \ref{lem:swap}\textbf{(i)}.
\begin{lemma}\label{lem:eqn:bacwardRecurse}
Let $\eta_j,\xi_j$ satisfy the recursions in \eqref{eq:def_eta_xi}. Then
\begin{equation}\nonumber
  \frac{1}{(1+\xi_5\eta_5)^2} \prod_{j=1}^4 \frac{(1-\eta_j^2)}{(1+\xi_j\eta_j)^2} = \frac{1}{(1+\xi_1 \tilde{\eta}_1)^2}\prod_{j=1}^4 \frac{(1-\eta_j^2)}{(1+\htheta_j\eta_j \tilde{\eta}_{j+1})^2}
\end{equation}
where $\tilde{\eta}_5 = \eta_5$ and for $i \in\{1,2,3,4\}$ the term $\tilde{\eta}_i = q(\htheta_i\tilde{\eta}_{i+1}, \eta_i)$.
\end{lemma}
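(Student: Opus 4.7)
The plan is to reduce the claim to an iterated application of Lemma~\ref{lem:swap}\textbf{(i)}. The factors $\prod_{j=1}^{4}(1-\eta_j^2)$ appear identically on both sides, so after clearing the common numerator and the squares, it suffices to prove the purely algebraic identity
\begin{equation*}
(1+\xi_5\eta_5)\prod_{j=1}^{4}(1+\xi_j\eta_j) \;=\; (1+\xi_1\tilde\eta_1)\prod_{j=1}^{4}(1+\htheta_j\eta_j\tilde\eta_{j+1}).
\end{equation*}
Every quantity appearing here is well-defined, since the recursion $\xi_{j+1}=\htheta_j\,q(\eta_j,\xi_j)$ from \eqref{eq:def_eta_xi} holds for $j=1,2,3,4$.

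I would proceed by telescoping from the top down. First, apply Lemma~\ref{lem:swap}\textbf{(i)} with its roles $(\xi_1,\eta_1,\xi_2,\eta_2,\htheta_1)$ assigned to $(\xi_4,\eta_4,\xi_5,\eta_5,\htheta_4)$. The hypothesis $\xi_5=\htheta_4\,q(\eta_4,\xi_4)$ holds, and the base case $\tilde\eta_5=\eta_5$ matches the lemma's convention, so we obtain
\begin{equation*}
(1+\xi_5\eta_5)(1+\xi_4\eta_4) \;=\; (1+\htheta_4\eta_4\tilde\eta_5)(1+\xi_4\tilde\eta_4),
\end{equation*}
with $\tilde\eta_4=q(\htheta_4\tilde\eta_5,\eta_4)$, in agreement with our definition.

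Next, I would apply Lemma~\ref{lem:swap}\textbf{(i)} to the pair $(1+\xi_4\tilde\eta_4)(1+\xi_3\eta_3)$, this time with its roles assigned to $(\xi_3,\eta_3,\xi_4,\tilde\eta_4,\htheta_3)$. Since $\xi_4=\htheta_3\,q(\eta_3,\xi_3)$, this peels off a factor $(1+\htheta_3\eta_3\tilde\eta_4)$ and replaces the pair by $(1+\xi_3\tilde\eta_3)$ with $\tilde\eta_3=q(\htheta_3\tilde\eta_4,\eta_3)$, matching the definition given in the statement. Iterating the same step for $j=2$ and then $j=1$ successively peels off one factor $(1+\htheta_j\eta_j\tilde\eta_{j+1})$ at a time, leaving $(1+\xi_1\tilde\eta_1)$ as the unique unpaired factor after four applications. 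Squaring yields the identity.

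Since each step is a direct instantiation of Lemma~\ref{lem:swap}\textbf{(i)}, there is no genuine technical obstacle; the only thing to watch is the bookkeeping of indices, and in particular the fact that the hypothesis $\xi_{j+1}=\htheta_j\,q(\eta_j,\xi_j)$ of the lemma is exactly the recursion in \eqref{eq:def_eta_xi}, which is what makes each of the four applications legal.
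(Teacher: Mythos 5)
Your proof is correct and is precisely the argument the paper has in mind: the paper itself disposes of this lemma with the single sentence that it ``follows from repeated applications of Lemma~\ref{lem:swap}\textbf{(i)},'' and your telescoping from the top pair $(1+\xi_5\eta_5)(1+\xi_4\eta_4)$ down to $(1+\xi_1\tilde\eta_1)$ is exactly that chain, each step licensed by the recursion $\xi_{j+1}=\htheta_j\,q(\eta_j,\xi_j)$ from \eqref{eq:def_eta_xi}. The index bookkeeping matches the conventions $\tilde\eta_5=\eta_5$, $\tilde\eta_i=q(\htheta_i\tilde\eta_{i+1},\eta_i)$, so nothing further is needed.
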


    \subsection{Decomposition of the sample space}
    \label{sec:Aijk}

    We decompose the sample space into a collection of disjoint events depending on the tiers of the signals.
		\begin{enumerate}
			\item We say that there is a \textit{flip} 
            at two neighboring vertices $u,v\in \{y_1,\dots, y_4, w_1,\dots,w_4\}$ if 
            $\sigma_u\neq \sigma_v$;
			\item We say that reconstruction fails \textit{moderately at} $w_j$ if $\sigma_{w_j}\eta_j\in [ -c_{\ref{eqn:antiReconstruction}}, 1-C_{\ref{eqn:etareconstruct}}\delta]$;
			\item We say that reconstruction fails \textit{severely at} $w_j$ if $\sigma_{w_j} \eta_j< -c_{\ref{eqn:antiReconstruction}}$.
		\end{enumerate} 
        Using Corollary \ref{cor:eta}, the above events are closely related to whether or not the magnetizations in Thm. \ref{thm:Robust} reconstruct the true signal well. By Claim \ref{claim:unsigned_mag}, the above events are independent for $j=1,\dots,4$.

  In the remainder of the proof, when we say that there is a \textit{flip} (resp. \textit{failure}) we will implicitly refer to a flip happening between two of the vertices involved in equation \eqref{eqn:4terms} (resp. at one of the vertices in \eqref{eqn:4terms}). Define the events
		\begin{align}
			\nonumber M_j&:= \{\textup{there is a moderate failure at }w_j\}, \\
			\nonumber S_j&:=\{\textup{there is a severe failure at }w_j\}\\
            \label{eqn:AijkDef} A_{i,j,k} &:= \left\{\begin{matrix}  \textup{there are exactly }i\textup{  flips,} \\
            \textup{exactly }j\textup{ moderate failures, and exactly }k\textup{  severe failures}
            \end{matrix}
            \right\}.
		\end{align}
		These events $A_{i,j,k}$s are disjoint and
\begin{equation}\label{eqn:disjoint_union}
	\PR_{\param^{*}}\left(\bigcup_{i=0}^7 \bigcup_{j=0}^4 \bigcup_{k=0}^4 A_{i,j,k}\right) = 1.
		\end{equation}
		Now by the reconstruction theorem (Thm. \ref{thm:Robust}) or Corollary \ref{cor:eta},  
		\begin{align*}
			\PR_{\param^{*}}(M_j)\le c_{\ref{eqn:Reconstruct}} \delta \qquad \textup{and} \qquad \PR_{\param^{*}}(S_j) \le C_{\ref{eqn:antiReconstruction}}\delta^2.
		\end{align*}
		By the independence property (Claim \ref{claim:unsigned_mag}), we get 
\begin{equation}\label{eqn:AijkBounds}
			\PR_{\param^{*}}(A_{i,j,k}) \le \binom{7}{i}\binom{4}{j,k} \left(C_{\ref{eqn:pBounds}} \delta \right)^i  (c_{\ref{eqn:Reconstruct}}\delta)^j (C_{\ref{eqn:antiReconstruction}}\delta^2)^k = O(\delta^{i+j+2k}).
		\end{equation}
		Together with \eqref{eqn:disjoint_union}, it follows that 
		\begin{align}\nonumber 
			\PR\left(\bigcup_{i+j+2k\le 2} A_{i,j,k} \right)\ge 1 - O(\delta^3).
		\end{align}

\begin{remark}
    Note that the signals $|\eta_j|\le \htheta_j\le 1-2c_{\ref{eqn:pHatBounds}}\delta$. This fact will be used frequently in the sequel.
\end{remark}

\begin{remark}\label{rmk:strongmagnotation} In the sequel we will frequently write the condition $\sigma_{w_j}\eta_j \ge 1-C_{\ref{eqn:etareconstruct}}\delta$ as either $\sigma_{w_j}\eta_j = 1-\Theta(\delta)$ (noting the previous remark) or $1-O(\delta)$. We will use the latter if only the lower bound is needed. We will only refer to the precise constant when needed for clarity. However, we will write $\eta_j \in[- c_{\ref{eqn:antiReconstruction}}, 1-C_{\ref{eqn:etareconstruct}}\delta]$ as $\eta_j\in[-c_{\ref{eqn:antiReconstruction}}, 1-O(\delta)]$ and keep explicit reference to $c_{\ref{eqn:antiReconstruction}}$.
\end{remark}

	\subsection{Proof of Lemma \ref{lem:4terms}}
	
	In this section, we prove Lemma \ref{lem:4terms}, which gives moment bounds for the random variable $W_{i}$ defined in \eqref{eq:def_W_i}. Recall that without loss of generality, we will only prove it for $W_{4}$ (see \eqref{eq:W4_W4_tilde}). 
	
	The starting point of our analysis is the following generic bound which follows directly from Corollary \ref{cor:eta} and Proposition \ref{prop:twoTerms}: Almost surely, 
	\begin{equation}\label{eq:W4_2_2}
	W_{4}
    \le \sup_{|\xi_1|\le 1-2c_{\ref{eqn:pHatBounds}}\delta}  F(\htheta_1,\eta_1,\eta_2,\xi_1)  \sup_{|\xi_3|\le 1-2c_{\ref{eqn:pHatBounds}}\delta}  F(\htheta_3,\eta_3,\eta_4,\xi_3) = O(\delta^{-2}).
	\end{equation} 
	Recalling that the off-diagonal entries of the Hessian are product of terms like $W_{4}$, the bound above is too crude to yield any meaningful control on the off-diagonal entries of the Hessian. Our proof of Lemma \ref{lem:4terms} proceeds by some careful case analysis of the value of $W_{4}$ depending on the three tiers (see \eqref{eq:recons_tiers}) of the signals $\eta_{1},\dots,\eta_{4}$.

        In order to show \eqref{eqn:boundForFiniteSample}, we will prove the (slightly stronger) claim below, which because of \eqref{eqn:AijkBounds}, is enough to conclude \eqref{eqn:boundForFiniteSample}.
        \begin{lemma}[Detailed Description of $W_4$]\label{lem:4termsAlt}
           Under the assumptions of Lemma \ref{lem:4terms},  \begin{equation}\label{eqn:boundForFiniteSample_pf}
		W_{4} \in \begin{cases}[0,C_{\ref{eqn:boundForFiniteSample}}\delta^2]&:\textup{on }A_{0,0,0}\\
         [0,C_{\ref{eqn:boundForFiniteSample}}\delta]&:\textup{on }A_{0,1,0}\\
			[0,C_{\ref{eqn:boundForFiniteSample}}]&:\textup{on }A_{1,0,0}\cup A_{2,0,0} \cup A_{3,0,0}\cup A_{0,2,0}\\
			[0,C_{\ref{eqn:boundForFiniteSample}}\delta^{-1}]&:\textup{ on }A_{0,3,0}\cup A_{1,1,0}\cup A_{2,1,0}\cup A_{0,0,1} \cup A_{1,0,1} \\
			[0,C_{\ref{eqn:boundForFiniteSample}}\delta^{-2}]&:\textup{ on the sample space }\Omega
		\end{cases}.
	\end{equation}
        \end{lemma}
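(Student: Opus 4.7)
The strategy is to establish the pathwise bounds in \eqref{eqn:boundForFiniteSample_pf} by controlling the numerator and denominator of the representation from Proposition~\ref{prop:4terms_upper_bd}:
$$W_4 \;\le\; \frac{1}{(2c_{\ref{eqn:pHatBounds}}\delta)^2}\cdot \frac{\prod_{j=1}^{4}(1-\eta_j^2)}{\prod_{j=1}^{3}(1+\htheta_j\eta_j\widetilde{\eta}_{j+1})^2}.$$
The numerator bound is immediate from the trichotomy in Corollary~\ref{cor:eta}: at each index $\ell$ with good reconstruction at $w_\ell$ we have $|\eta_\ell|=1-\Theta(\delta)$ and hence $1-\eta_\ell^2=O(\delta)$, whereas at a moderate or severe failure we use only the trivial bound $1-\eta_\ell^2\le 1$. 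Thus $\prod_{j=1}^{4}(1-\eta_j^2)=O(\delta^{\,4-j-k})$ on $A_{i,j,k}$.

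The denominator is analyzed by tracking the signs of $\eta_\ell$ (determined, modulo failures, by $\sigma_{w_\ell}$ and hence by the flip pattern) through the backward recursion $\widetilde{\eta}_\ell = q(\htheta_\ell\widetilde{\eta}_{\ell+1},\eta_\ell)$. Each factor $1+\htheta_\ell\eta_\ell\widetilde{\eta}_{\ell+1}$ is either $\Omega(1)$ (when $\eta_\ell$ and $\widetilde{\eta}_{\ell+1}$ have compatible signs, or when one of them is bounded away from $\pm 1$) or as small as $\Theta(\delta)$ when they are strongly opposite. The three recursion lemmas are the essential tools: Claim~\ref{claim:quickBound1} handles two strong same-sign inputs (outputs stay close to that sign), Claim~\ref{claim:strongopposites.} handles two strong opposite-sign inputs (output in $[-1+c,1-c]$), and Claim~\ref{claim:qRecurseClaim} guarantees that iterated $q$'s stay bounded away from $-1$. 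Enumerating the flip/failure configurations on each listed event then gives the claimed bound; for instance, on $A_{0,0,0}$ all $\widetilde{\eta}_\ell$ remain close to the common sign so every denominator factor is $\Omega(1)$, producing $W_4\le C\delta^{-2}\cdot\delta^4=C\delta^2$, while on $A_{0,1,0}$ Claim~\ref{claim:qRecurseClaim} keeps the denominator $\Omega(1)$, giving $W_4=O(\delta)$.

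The main obstacle is the event $A_{3,0,0}$ (three flips forcing $\eta_1,\eta_2,\eta_3,\eta_4$ to alternate in sign), where the crude prefactor $1/(2c_{\ref{eqn:pHatBounds}}\delta)^2$ in Proposition~\ref{prop:4terms_upper_bd} would yield only $W_4=O(\delta^{-2})$ since two denominator factors can shrink to $\Theta(\delta^2)$. For this case I would observe that the alternating-sign structure forces $|\widetilde{\eta}_1|\le 1-c$ via Claim~\ref{claim:strongopposites.} applied through the backward recursion; returning to the direct form $W_4=\sup_{|\xi_1|\le 1-2c_{\ref{eqn:pHatBounds}}\delta}\prod_j(1-\eta_j^2)/(1+\xi_j\eta_j)^2$ and using that $\inf_{\xi_1}(1+\xi_1\widetilde{\eta}_1)^2=\Omega(1)$ whenever $|\widetilde{\eta}_1|$ is bounded away from $1$, the effective prefactor becomes $O(1)$, producing $W_4=O(1)$. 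Finally, the catch-all bound $W_4\le C\delta^{-2}$ on the full sample space $\Omega$ follows directly from Proposition~\ref{prop:twoTerms}(1) applied to the pairs $(\eta_1,\eta_2)$ and $(\eta_3,\eta_4)$, each contributing at most $C\delta^{-1}$. The remaining work is the exhaustive sub-case enumeration within each event to verify that the combination of numerator and denominator estimates produces the advertised power of $\delta$.
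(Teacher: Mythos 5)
Your treatment of the no-flip events ($A_{0,r,0}$ via Proposition~\ref{prop:4terms_upper_bd}, Claims~\ref{claim:qRecurseClaim} and \ref{claim:infxy}) and the catch-all $O(\delta^{-2})$ bound match the paper, but the step you single out as the main obstacle, $A_{3,0,0}$, rests on two claims that are not true. First, three flips do not force $\eta_1,\dots,\eta_4$ to alternate in sign: e.g.\ flips on the pendant edges $\{y_1,w_1\},\{y_2,w_2\},\{y_3,w_3\}$ produce the pattern $(-,-,-,+)$. Second, and more seriously, sign changes do not force $|\tilde{\eta}_1|\le 1-c$: since $1-q(a,b)=\frac{(1-a)(1-b)}{1+ab}=O(\delta)$ whenever $b=1-O(\delta)$ and $a$ is bounded away from $-1$, a strong-opposite collision that makes some $\tilde{\eta}_j$ moderate is immediately undone by the next strong input, so e.g.\ with signs $(+,-,+,+)$ one gets $\tilde{\eta}_1=1-O(\delta)$ and the adversarial prefactor $(1+\xi_1\tilde{\eta}_1)^{-2}$ really is $\Theta(\delta^{-2})$. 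In those configurations your argument as written gives nothing better than $O(\delta^{-2})$; the $O(1)$ bound still holds, but only via a dichotomy you have not carried out (two small backward-recursion factors force $|\tilde{\eta}_1|$ moderate, hence prefactor $O(1)$; a large prefactor forces at most one small factor, so the $\delta^{4}$ numerator absorbs $\delta^{-2}\cdot\delta^{-2}$). The same sign-tracking difficulty reappears on $A_{1,1,0}\cup A_{2,1,0}\cup A_{0,0,1}\cup A_{1,0,1}$, which you defer entirely to ``exhaustive enumeration,'' so the $O(\delta^{-1})$ line is also not established.

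The paper avoids this bookkeeping on all flip-containing events: since $|\xi_3|\le 1-2c_{\ref{eqn:pHatBounds}}\delta$ always, $W_4\le\sup_{|\xi_1|\le 1-2c_{\ref{eqn:pHatBounds}}\delta}F(\htheta_1,\eta_1,\eta_2,\xi_1)\cdot\sup_{|\xi_3|\le 1-2c_{\ref{eqn:pHatBounds}}\delta}F(\htheta_3,\eta_3,\eta_4,\xi_3)$, and Proposition~\ref{prop:twoTerms}(2) bounds a pair term by $O(1)$ whenever both of its signals are strong, \emph{regardless of their signs} and uniformly over the adversarial input. This gives $O(1)$ on $A_{1,0,0}\cup A_{2,0,0}\cup A_{3,0,0}$ in one line, and on the single-failure events the pigeonhole observation that one of the pairs $(\eta_1,\eta_2)$, $(\eta_3,\eta_4)$ is all-strong gives $O(1)\cdot O(\delta^{-1})=O(\delta^{-1})$ via parts (2) and (1) respectively. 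To close your gap, either adopt this pairwise use of Proposition~\ref{prop:twoTerms}, or supply the missing case analysis described above for every admissible flip/failure pattern.
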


		\begin{proof}

        We bound the value of $W_{4}$ on various events in \eqref{eqn:boundForFiniteSample_pf}. 
        \vspace{0.1cm}
  
		\textbf{On the event $A_{0,0,0}\cup A_{0,1,0} \cup A_{0,2,0}\cup A_{0,3,0}$:} On this event we do not have any signal flips, but we do allow for moderate failures of reconstruction. We suppose, without loss of generality, that $\sigma_{w_1} = 1$. Consequently, all the signals are $+1$ and by the definition of moderate failures we see that 
        \begin{align*}
          \eta_i\ge -c_{\ref{eqn:antiReconstruction}}
         \end{align*} 
        for each $i\in[4]$.
  
    Recall the representation in Proposition \ref{prop:4terms_upper_bd} using $\tilde{\eta}_j = q(\htheta_j\eta_{j+1}, \eta_j)$ and $\tilde{\eta}_4 = \eta_4.$ We see that on the event $\bigcup_{r = 0}^3A_{0,r,0}$ it holds that there is some constant $\kappa\in(0,1)$ (by Claim \ref{claim:qRecurseClaim}) such that  
		\begin{align*}
			\tilde{\eta}_j \ge -\kappa \qquad\textup{ for } j = 1,2,3,4.
		\end{align*} 
		It is easy to check using Claim \ref{claim:infxy} that for each $j = 1,2,3$ that 
  \begin{equation*}			1+\htheta_j\eta_j\tilde{\eta}_{j+1}\ge 1-\kappa.
		\end{equation*}
		
 On $A_{0,r,0}$ we have 
 $\#\{i: \sigma_{w_i}\eta_i\ge 1-O(\delta)\}= 4-r$. Therefore, using Proposition \ref{prop:4terms_upper_bd}, we see that 
		\begin{equation}\nonumber 
			\mathbf{1}_{A_{0,r,0}} W_{4} 
            \le \frac{1}{(2c_{\ref{eqn:pHatBounds}}\delta)^2}.\frac{\prod_{i=1}^4 (1-\eta_i^2)}{\prod_{i=1}^3 (1+\htheta_i\eta_i{\tilde{\eta}_{i+1}})^2} \le 
            \frac{O(\delta^{4-r})}{(2c_{\ref{eqn:pHatBounds}}\delta)^2(1-\kappa)^6}  = O(\delta^{2-r}).
		\end{equation}

  \vspace{0.1cm}
		\textbf{On the event $A_{1,0,0}\cup A_{2,0,0}\cup A_{3,0,0}$:} 
		We now turn to the event where we allow for flips but no failures of reconstruction. 
		As there are no failures on this event, each of the $\eta_j$'s satisfies $|\eta_j| = |\sigma_{w_j}\eta_j|=1-\Theta(\delta).$ 
		Hence, a direct application of the second conclusion of Proposition \ref{prop:twoTerms} implies
		\begin{align}
		&\mathbf{1}_{A_{1,0,0}\cup A_{2,0,0} \cup A_{3,0,0}}  W_{4} 
            \\
            &\qquad \le \mathbf{1}_{A_{1,0,0}\cup A_{2,0,0}\cup A_{3,0,0}} \sup_{|\xi_1|\le 1-2c_{\ref{eqn:pHatBounds}}\delta} \, F(\htheta_1,\eta_1,\eta_2,\xi_1) \sup_{|\xi_3|\le 1-2c_{\ref{eqn:pHatBounds}}\delta}   F(\htheta_3,\eta_3,\eta_4,\xi_3) = O(1).
		\end{align}

		\vspace{0.1cm}
		\textbf{On the event $A_{1,1,0}\cup A_{2,1,0} \cup A_{0,0,1}\cup A_{1,0,1}$:}  
		On this event, we allow for either a single moderate failure or a single severe failure but not both. Note that there are at least three successful reconstructions and hence on $A_{1,1,0}\cup A_{2,1,0}\cup A_{0,0,1}\cup A_{1,0,1}$, we have 
  \begin{equation*}
        \#\{i\in[4]: |\eta_i|\ge 1-O(\delta)\} \ge 3.
  \end{equation*} (Note that the count above can be 4 
  as the severe failure can also satisfy $|\eta_i|\ge 1-O(\delta)$.) By the pigeonhole principle, either $|\eta_{i}|\ge 1-O(\delta)$ for both $i=1,2$ or $|\eta_i|\ge 1-O(\delta)$ for both $i=3,4$. Then one application of each part of Proposition \ref{prop:twoTerms}  gives  that, on $A_{1,1,0}\cup A_{2,1,0} \cup A_{0,0,1}\cup A_{1,0,1}$, 
		\begin{align*}
        W_{4}
        &\le \sup_{|\xi_1|\le 1-2c_{\ref{eqn:pHatBounds}}\delta}   F(\htheta_1,\eta_1,\eta_2,\xi_1) \sup_{|\xi_3|\le 1-2c_{\ref{eqn:pHatBounds}}\delta}   F(\htheta_3,\eta_3,\eta_4,\xi_3)= O(\delta^{-1}).
		\end{align*}
		This finishes the proof.
	\end{proof}

		\subsection{Proof of Lemma \ref{lem:5terms}: The $\widetilde{W}_N$ term}

 In this section we prove the bounds on $\widetilde{W}_N$ found in Lemma \ref{lem:5terms}. Recall that we focus on the case of $\widetilde{W}_{4}$ in \eqref{eq:W4_W4_tilde}.  
	
	\begin{proof}[\textbf{Proof of $\widetilde{W}_N$ bounds in Lemma \ref{lem:5terms}}]
		Observe that, by Proposition \ref{prop:twoTerms} and a trivial bound on the pre-factor $\frac{1}{(1+\xi_5\eta_5)^2}\le \frac{1}{(2c_{\ref{eqn:pHatBounds} }\delta)^2} = O(\delta^{-2})$, we get 
  \begin{align}
		\widetilde{W}_4 
        &=O(\delta^{-2}) \sup_{|\xi_1| \le 1-2c_{\ref{eqn:pHatBounds}}\delta} \prod_{j=1}^4 \frac{(1-\eta_j^2)}{(1+\xi_j\eta_j)^2} 
  \label{eqn:LinftyBoundfor5} = O(\delta^{-2})W_4 =O(\delta^{-4}).
		\end{align}
		and so we just need to analyze the events that have probability at most $O(\delta^4)$. 
    Moreover, for any random variables $X$ it holds that
	\begin{equation}\label{eqn:genHolder}
		\E[X^2]\le \|X\|_{L^\infty} \, \E[|X|].
	\end{equation}  Therefore, the second moment bound in Lemma \ref{lem:5terms} for $\widetilde{W}_4$ follows from the first moment bound and \eqref{eqn:LinftyBoundfor5}.
		
    In the rest of the proof, we will show the first moment bound on $\widetilde{W}_{4}$ as stated in \eqref{eqn:4termsDiag}. We use the same definitions for the events $A_{i,j,k}$ as in \eqref{eqn:AijkDef} and that for `flips', `moderate failures', and `severe failures' in the paragraph preceding \eqref{eqn:AijkDef}. 
    Observe that the events $A_{i,j,k}$ do not involve the random variable $\eta_5 = Z_{y_5} = Z_x$ or the signal $\sigma_{y_5} = \sigma_x$. To account for these variables at the fifth node $y_{5}$ contributing to $\widetilde{W}_{4}$, define events 
  \begin{align*}
  B_1 := \{\sigma_{y_5}= \sigma_{y_4}\} \quad \textup{and} \quad  B_2 := \{\sigma_{y_5}\eta_5\ge 1-C_{\ref{eqn:etareconstruct}}\delta\}.
  \end{align*}
  We will frequently write the shorthand
  \begin{equation*}
      B_2 = \{\sigma_{y_5} \eta_5 \ge 1-O(\delta)\} 
  \end{equation*}
  however we will never write $\sigma_{y_5}\eta_5 = 1-\Theta(\delta)$ as this is a.s. \textit{false} when $y_5 = x$ is a leaf.

  By Claim \ref{claim:unsigned_mag}, $B_1, B_2$ are independent of each other 
  and of any of the $A_{i,j,k}$'s, and by Assumption \ref{assumption1} and Thm. \ref{thm:Robust} \textbf{(i)}, 
  \begin{equation*}
      \PR(B_1) \ge 1-2c_{\ref{eqn:pHatBounds}}\delta \qquad \textup{and}\qquad \PR(B_2)\ge 1-c_{\ref{eqn:Reconstruct}}\delta.
  \end{equation*}
  Let us write
  \begin{equation*}
  \mathcal{A}:= \left(B_1 \cap B_2 \cap \bigcup_{i+j+2k\le 3} A_{i,j,k}\right)  \cup B_1^c \cup B_2^c.
  \end{equation*}
  Observe that by using \eqref{eqn:AijkBounds} 
  and the above bounds, 
		\begin{align}\nonumber 
			\PR\left( \mathcal{A} \right)= 1-  O(\delta^4).
		\end{align}
    Together with \eqref{eqn:LinftyBoundfor5}, this yields $\E[\widetilde{W}_4 \mathbf{1}_{\mathcal{A}^{c}}]= O(1)$. Hence in order to show the first moment bound on $\widetilde{W}_{4}$ in \eqref{eqn:4termsDiag}, it suffices to show that
  \begin{equation}\label{eqn:tildeW4L1bound1}
      \E[\widetilde{W}_4 \mathbf{1}_{\mathcal{A}}] \le C_{\ref{eqn:tildeW4L1bound1}}.
  \end{equation}
    for some  constant $C_{\ref{eqn:tildeW4L1bound1}}<\infty$. 
  
To establish the above, first note that by Claim \ref{claim:unsigned_mag}, $B_1$ and $B_2$ are independent of $\prod_{j=1}^4 \frac{(1-\eta_j^2)}{(1+\xi_j\eta_j)^2}$, and hence $W_{4}$. It follows from \eqref{eqn:LinftyBoundfor5} that
\begin{align*}
    \E[\widetilde{W}_4\mathbf{1}_{B_1^c\cup B_2^c}]\le\PR(B_1^c\cup B_2^c)\times O(\delta^{-2}) \E[W_4] = O(1).
\end{align*}

  Then we can use the ``approximate distribution'' in Lemma \ref{lem:4termsAlt}, equation \eqref{eqn:boundForFiniteSample_pf}, along with the bounds \eqref{eqn:AijkBounds} for the probability of the events $A_{i,j,k}$. First note that 
 	\begin{equation*}\label{eqn:simplify1}
			\E\left[W_4
   \mathbf{1}_{[A_{0,0,0} \cup A_{2,0,0} \cup A_{3,0,0} \cup A_{0,1,0}\cup A_{0,2,0}\cup A_{2,1,0}\cup A_{1,0,1}]}\right] = O(\delta^2).
		\end{equation*} 
  We conclude using the first bound in \eqref{eqn:LinftyBoundfor5} that 
  \begin{align*}
      \E&\left[\widetilde{W}_4 \mathbf{1}_{[A_{0,0,0} \cup A_{2,0,0} \cup A_{3,0,0} \cup A_{0,1,0}\cup A_{0,2,0}\cup A_{2,1,0}\cup A_{1,0,1}]} \right]\\
     \nonumber &\le  O(\delta^{-2}) \E\left[{W}_4 \mathbf{1}_{[A_{0,0,0} \cup A_{2,0,0} \cup A_{3,0,0} \cup A_{0,1,0}\cup A_{0,2,0}\cup A_{2,1,0}\cup A_{1,0,1}]} \right] = O(1).
  \end{align*}
    Examining the definition of the event $\mathcal{A}$, 
    it remains  to analyze $\widetilde{W}_4$ on the event 
    \begin{align*}
        \mathcal{A}&\setminus (B_1^c\cup B_2^c \cup A_{0,0,0} \cup A_{2,0,0} \cup A_{3,0,0} \cup A_{0,1,0}\cup A_{0,2,0}\cup A_{2,1,0}\cup A_{1,0,1}) \\
        &=  B_1\cap B_2\cap \left(A_{1,0,0} \cup A_{0,0,1} \cup A_{1,1,0}\cup A_{1,2,0}  \cup A_{0,1,1} \right).
    \end{align*}
	 We now proceed to bound $\widetilde{W}_4$ systematically on this remaining event, splitting it into three cases depending on which $A_{i,j,k}$ occurs.

        \vspace{0.1cm}
		\textbf{On the event $B_1\cap B_2 \cap (A_{1,0,0}\cup A_{0,0,1})$:} 
        For this case, $A_{1,0,0}\cup A_{0,0,1}$ means that there is one flip or one severe failure (see Sec. \ref{sec:Aijk}). On this event, we show that the denominator $1+\xi_{5}\eta_{5}$ in the prefactor in $\widetilde{W}_{4}$ is bounded away from 0. Then we can combine it with the bound on $W_{4}$ in Lemma \ref{lem:4termsAlt} to conclude.

        Without loss of generality, we assume that $\sigma_{y_5} = +1$. As we are on the event $B_1$ we also see that $\sigma_{y_4} = +1$. 
    We claim that on this event, 
\begin{equation}\label{eqn:b1b2a100a001etas}
    \eta_{1},\eta_2,\eta_3\ge 1-C_{\ref{eqn:etareconstruct}}\delta\qquad\textup{ or }\qquad \eta_{4}\ge 1-C_{\ref{eqn:etareconstruct}}\delta .
    \end{equation}
    Indeed, on $A_{1,0,0}\cap \{\sigma_{y_4} = +1\}$, there is only a single flip, which can either be on the edge $\{y_4,w_4\}$ or not. If it is on that edge, then $\sigma_{w_4} = -1$ is the only spin of $-1$, and otherwise $\sigma_{w_4} = +1$. So either 
    \begin{equation*}
        \sigma_{w_1} = \sigma_{w_2} = \sigma_{w_3} = +1 \qquad\textup{ or }\qquad \sigma_{w_4} = +1
    \end{equation*} and \eqref{eqn:b1b2a100a001etas} 
    holds as there are no failures of reconstruction. On $A_{0,0,1}\cap \{\sigma_{y_{4}}=+1\}$, we have $\sigma_{w_{1}}=\dots=\sigma_{w_{4}}=+1$ and there is a single severe failure and no moderate failure. If that severe failure happens at $w_{4}$, then we have successful reconstruction for $w_{1},w_{2},w_{3}$. But since all spins at these nodes are $+1$, we have the first inequality in \eqref{eqn:b1b2a100a001etas}. Otherwise, we have successful reconstruction for $w_{4}$ and get the second inequality in \eqref{eqn:b1b2a100a001etas}. This shows the claim.

    Now recall that $\xi_1\ge -1+2c_{\ref{eqn:pHatBounds}}\delta$ from Assumption \ref{assumption1}. Recalling the definition of $\xi_4$
    from \eqref{eq:def_eta_xi} we see
        \begin{equation*}
            \frac{\xi_4}{\htheta_{3}} =  q\left(\eta_3, \htheta_2 q(\eta_2, \htheta_1q(\eta_1, \xi_1)\right)
        \end{equation*}
        is precisely of the form in \eqref{eqn:poly1} in Claim \ref{claim:quickBound2}. Hence by using Claim \ref{claim:quickBound2} and Assumption \ref{assumption1}, we see that if  $\eta_1,\eta_2,\eta_3\ge 1-C_{\ref{eqn:etareconstruct}}\delta$ then
        \begin{equation}\nonumber
            \xi_4\ge 1-O(\delta)\qquad\textup{and}\qquad \eta_4\ge -1+\Omega(\delta). 
        \end{equation} 
        Otherwise, we have the generic bound $\xi_4 \ge -1+\Omega(\delta)$ and $\eta_4\ge 1-O(\delta)$.
        Therefore, we see that in either case, 
\begin{equation}\label{eqn:helper1forB1B2A100}
        \max\left(\eta_4,\xi_4 \right)\ge 1-O(\delta)\qquad \textup{and}\qquad \min\left(\eta_4,\xi_4 \right)\ge -1+\Omega(\delta).
        \end{equation}        
        Applying Claim \ref{claim:strongopposites.}, we can find a constant $\kappa\in(0,1)$ such that 
        \begin{equation}\label{eqn:antieqn1}
			\xi_5  = \htheta_4 q(\xi_4,\eta_4)\ge (1-O(\delta)) q(\xi_4,\eta_4)\ge -\kappa.
		\end{equation}

		Hence, note that on the event $B_1\cap B_2\cap (A_{1,0,0}\cup A_{0,0,1})\cap \{\sigma_{y_5} = +1\}$, we have $\eta_5 = \sigma_{y_5} \eta_5 \ge 1-O(\delta)$ and $\xi_5\ge -\kappa$ and so $(1+\xi_5\eta_5) \ge 1-\kappa$ (by Claim \ref{claim:infxy}). Hence
		\begin{equation*}
	\widetilde{W}_4\mathbf{1}_{B_1\cap B_2\cap (A_{1,0,0}\cup A_{0,0,1})\cap\{\sigma_{y_5} = +1 \}}\le \frac{1}{(1-\kappa)^2} W_4\mathbf{1}_{ A_{1,0,0} \cup A_{0,0,1 }}.
		\end{equation*} 
  It follows from Lemma \ref{lem:4termsAlt} and \eqref{eqn:AijkBounds} that 
		\begin{equation}
			\nonumber
\E\left[\widetilde{W}_4\mathbf{1}_{B_1\cap B_2\cap (A_{1,0,0}\cup A_{0,0,1})\cap\{\sigma_{y_5} = +1 \}} \right] =O(\delta).
		\end{equation}

		\vspace{0.1cm}
		\textbf{On the event $B_1\cap B_2 \cap (A_{1,1,0}\cup A_{1,2,0})$:} Let us now turn to the case where there is a single flip and one or two  moderate failures (see Sec. \ref{sec:Aijk}). 
        Without loss of generality, we suppose that $\sigma_{y_5} = +1$. This implies that $\sigma_{y_4} = +1$ as we are on the event $B_1$. 
        Since there is a single flip and no severe failure, we can break it into three sub-cases: 
        \begin{description}
            \item{(1)} $\sigma_{w_{4}}=+1$ and good reconstruction at $w_{4}$; 

            \item{(2)} $\sigma_{w_{4}}=+1$ and moderate failure at  $w_{4}$; 

            \item{(3)} $\sigma_{w_{4}}=-1$.
        \end{description}

        For the sub-case (1), we have  $\sigma_{w_4} =+1$ and $|\eta_4|\ge 1-O(\delta)$. Then since there is no severe failure in this case, we have $\eta_4\ge 1-O(\delta)$. So with Assumption \ref{assumption1}, the bounds in \eqref{eqn:helper1forB1B2A100} hold and therefore so does the lower bound on $\xi_{5}$ in \eqref{eqn:antieqn1}, for some $\kappa\in(0,1)$. 
        Hence 
		$(1+\xi_5\eta_5)\ge 1-\kappa$ (by Claim \ref{claim:infxy}). Thus by  Lemma \ref{lem:eqn:bacwardRecurse}, 
		\begin{align}
			       \widetilde{W}_4\mathbf{1}_{B_1\cap B_2 \cap (A_{1,1,0}\cup A_{1,2,0}) \cap \{\sigma_{w_4} = \sigma_{y_4} = 1, |\eta_4|\ge 1-O(\delta)\}}			&\le \frac{1}{(1-\kappa)^2} W_4\mathbf{1}_{A_{1,1,0}\cup A_{1,2,0}}  = O(\delta^{-1})\mathbf{1}_{A_{1,1,0}\cup A_{1,2,0}} 
          \label{eqn:a110part1}
		\end{align} 
        The expectation of the last expression is at most $O(1)$ by \eqref{eqn:AijkBounds}.
        This deals with the case where neither the failure of reconstruction nor the flip occurs at $w_4$. 

        Next, assume the sub-case (2): $\sigma_{w_4} = 1$ and there is a moderate failure at $w_4$. Then the definition of $B_{2}$, the monotonicity of $q$, and the definition of $q$ yields that, for $\delta$ small enough that $C_{\ref{eqn:etareconstruct}}\delta+c_{\ref{eqn:antiReconstruction}}\le 1$, 
        \begin{align*}
            \tilde{\eta}_4 = q(\htheta_4 \eta_5, \eta_4)\ge 
            q(1-C_{\ref{eqn:etareconstruct}}\delta, -c_{\ref{eqn:antiReconstruction}})
            \in[0,1]. 
        \end{align*}    
        Then  by using Lemma \ref{lem:swap}\textbf{(i)} and \textbf{(ii)}, letting $\widetilde{\eta}_5 = \eta_5$,
  \begin{align}
    \frac{1}{(1+\xi_5\eta_5)^2} \frac{(1-\eta_4^2)}{(1+\xi_4\eta_4)^2} &= \frac{1}{(1+\htheta_4\eta_4\tilde{\eta}_5 )^2} \frac{(1-\eta_4^2)}{(1+\xi_4\tilde{\eta}_4)^2} \qquad\qquad\textup{by \textbf{(i)}}\nonumber\\
    &=\frac{1}{(1+\htheta_4\eta_4\tilde{\eta}_5 )^2} \frac{(1-\eta_4^2)}{(1-\tilde{\eta}_4^2)} \frac{(1-\tilde{\eta}_4^2)}{(1+\xi_4\tilde{\eta}_4)^2} \qquad \nonumber\\
  \nonumber  &=\frac{1}{(1-(\htheta_4\tilde{\eta}_5)^2 )} \frac{(1-\tilde{\eta}_4^2)}{(1+\xi_4\tilde{\eta}_4)^2}\qquad\qquad\textup{by \textbf{(ii)}}
  \\
  \nonumber &\le \frac{1}{4c_{\ref{eqn:pHatBounds}}\delta} \frac{1-\tilde{\eta}_4^2}{(1+\xi_4\tilde{\eta}_4)^2} \qquad \qquad \textup{by }1-C_{\ref{eqn:etareconstruct}}\delta \le \htheta_4\tilde{\eta}_5 \le 1-2c_{\ref{eqn:pHatBounds}}\delta\textup{ from }B_{2}\textup{ and Assumption \ref{assumption1}.}
\end{align}
This yields 
\begin{align*}
    \widetilde{W}_{4} &\le \sup_{|\xi_3|\le 1-2c_{\ref{eqn:pHatBounds}}\delta} \frac{1}{(1+\xi_5\eta_5)^2} \frac{(1-\eta_4^2)}{(1+\xi_4\eta_4)^2}  \frac{(1-\eta_3^2)}{(1+\xi_3\eta_3)^2} 
            \sup_{|\xi_1|\le 1-2c_{\ref{eqn:pHatBounds}}\delta}\, F(\hat{\theta}_{1},\eta_{1},\eta_{2},\xi_{1}) \\
            & \le \frac{1}{4c_{\ref{eqn:pHatBounds}}\delta}  \sup_{|\xi_3|\le 1-2c_{\ref{eqn:pHatBounds}}\delta} \frac{1-\tilde{\eta}_4^2}{(1+\xi_4\tilde{\eta}_4)^2} 
           \frac{(1-\eta_3^2)}{(1+\xi_3\eta_3)^2} 
            \sup_{|\xi_1|\le 1-2c_{\ref{eqn:pHatBounds}}\delta}\, F(\hat{\theta}_{1},\eta_{1},\eta_{2},\xi_{1}) \\
            &=\frac{1}{4c_{\ref{eqn:pHatBounds}}\delta}  \sup_{|\xi_3|\le 1-2c_{\ref{eqn:pHatBounds}}\delta} F(\htheta_3,\eta_3,\tilde{\eta}_4,\xi_3)  \sup_{|\xi_1|\le 1-2c_{\ref{eqn:pHatBounds}}\delta}\, F(\htheta_1,\eta_1,\eta_2,\xi_1).
\end{align*}
Now if we further assume $A_{1,1,0}$, then all of $w_{1},w_{2},w_{3}$ have successful reconstruction and there is one flip except on the edge $\{y_{4},w_{4}\}$. If we further assume $A_{1,2,0}$, then two of $w_{1},w_{2},w_{3}$ have successful reconstruction and the other has a moderate failure, while there is one flip except on the edge $\{y_{4},w_{4}\}$. Thus by using Proposition \ref{prop:twoTerms}, we deduce from the bound above that 
		\begin{align}
            &\widetilde{W}_{4} \mathbf{1}_{B_1\cap B_2\cap (A_{1,1,0}\cup A_{1,2,0}) \cap \{\sigma_{y_4} = \sigma_{w_4} = 1, \textup{ moderate failure at }w_4\}} 
			 \le \frac{1}{4c_{\ref{eqn:pHatBounds}}\delta}\left( O(\delta^{-1})\mathbf{1}_{A_{1,1,0}}  + O({\delta^{-2}}) \mathbf{1}_{A_{1,2,0}}\right), \label{eqn:a110part3}
		\end{align}

		Lastly, assume the sub-case (3) where $\sigma_{w_4} = -1$, meaning that the single flip occurred between $y_4$ and $w_4$, so $\sigma_{w_{1}}=\sigma_{w_{2}}=\sigma_{w_{3}}=+1$. Then, $\eta_4\ge -1+2c_{\ref{eqn:pHatBounds}}\delta$ as that always holds by Assumption \ref{assumption1}, and $\eta_i = \sigma_{w_i}\eta_i \ge -c_{\ref{eqn:antiReconstruction}}$ for $i\in[3]$ since there are no severe failures.  Let us define $\tilde{\eta}_5 = \eta_5$ and $\tilde{\eta}_i = q(\htheta_i\tilde{\eta}_{i+1}, \eta_i)$ as in Lemma \ref{lem:eqn:bacwardRecurse}. Note that by Claim \ref{claim:strongopposites.}
		\begin{align*}
			\tilde{\eta}_4 &= q(\htheta_4\eta_5, \eta_4) \ge q(1-O(\delta), -1+\Omega(\delta)) \ge -\kappa\quad  \textup{for some constant }\kappa\in(0,1).
		\end{align*} 
        Now, by an application of Claim \ref{claim:qRecurseClaim}, we can find some other constant $\kappa'\in(0,1)$ such that \begin{equation*}\label{eqn:qRecurseAntiReconstruct3}
			\eta_1,\eta_2,\eta_3, \tilde{\eta}_1,\tilde{\eta}_2,\tilde{\eta}_3,\tilde{\eta}_4 \ge -\kappa'.
		\end{equation*} Combining this with Claim \ref{claim:infxy}, we see 
		\begin{equation*}
(1+\htheta_j\eta_j\tilde{\eta}_{j+1}) \ge 1-\kappa',\qquad \textup{for }j = 1,2,3.
		\end{equation*}
		By Assumption \ref{assumption1}, we have  $1+\htheta_4\eta_4\tilde{\eta}_5 , 1+\xi_1\tilde{\eta}_1 = \Omega(\delta)$. Note that there are at least two successful reconstructions among $\eta_{1},\dots,\eta_{4}$ on the event $A_{1,1,0}\cup A_{1,2,0}$. 
  Therefore by Lemma \ref{lem:eqn:bacwardRecurse},
		\begin{align}
		&\widetilde{W}_{4}	\mathbf{1}_{B_1\cap B_2 \cap (A_{1,1,0}\cup A_{1,2,0}) \cap \{\sigma_{w_4} = -1, \sigma_{y_4} = 1\}} \nonumber \\
		&\qquad = \sup_{|\xi_1|\le 1-2c_{\ref{eqn:pHatBounds}}\delta}  \frac{1}{(1+\xi_1\tilde{\eta}_1)^2} \prod_{j=1}^4 \frac{(1-\eta_j^2)}{(1+\htheta_j\eta_j\tilde{\eta}_{j+1})^2} \mathbf{1}_{B_1\cap B_2 \cap (A_{1,1,0}\cup A_{1,2,0}) \cap \{\sigma_{w_4} = -1, \sigma_{y_4} = 1\}} \nonumber \\
			&\qquad \le \frac{1}{\Omega(\delta)^2} \frac{O(\delta)^2 O(1)^2}{\Omega(\delta)^2 (1-\kappa')^6} \mathbf{1}_{A_{1,1,0}\cup A_{1,2,0}} = O(\delta^{-2})\mathbf{1}_{A_{1,1,0}\cup A_{1,2,0}}.\label{eqn:a110part2}
		\end{align}
        Since $\PR_{\param^{*}}(A_{1,1,0}\cup A_{1,2,0})=O(\delta^{2})$ from \eqref{eqn:AijkBounds}, the expectation of the last expression is $O(1)$. We conclude this sub-case.

     Examining equations \eqref{eqn:a110part1}, \eqref{eqn:a110part3}, and \eqref{eqn:a110part2},  we see that
		\begin{equation}\nonumber 
            \widetilde{W}_{4} 
            \le O(\delta^{-2}) \mathbf{1}_{A_{1,1,0}} + O(\delta^{-3}) \mathbf{1}_{A_{1,2,0}}.
		\end{equation} 
        Using the bounds in \eqref{eqn:AijkBounds} for $\PR(A_{i,j,k})$, we have $\PR(A_{1,2,0})= O(\delta^3)$ and $\PR(A_{1,1,0}) = O(\delta^2)$,         and so we conclude 
		\begin{align}
			\nonumber 
          \E_{\param^{*}}\left[\widetilde{W}_{4} \mathbf{1}_{B_1\cap B_2\cap (A_{1,1,0}\cup A_{1,2,0})}\right]
        =O(1).
		\end{align}

        \vspace{0.1cm}
		\textbf{On the event $B_1\cap B_2 \cap A_{0,1,1}$:} On this event, there is no flip (so WLOG, we  assume $\sigma_{z}=+1$ for all $z\in \{y_{1},\dots,y_{4},w_{1},\dots,w_{4}\}$) and there is one moderate and one severe failure among $w_{1},\dots,w_{4}$. 
        Note that since there are two failures of reconstruction, we have at least two $j\in[4]$ such that $\eta_j\ge 1-O(\delta)$. On the event 
        $B_{3} := \{\eta_1,\eta_2\ge 1-O(\delta)\}\cup \{\eta_3,\eta_4\ge 1-O(\delta)\}$, we have $W_{4}=O(\delta^{-1})$ by \eqref{eq:W4_2_2} and Proposition \ref{prop:twoTerms}. Hence using the second equality in \eqref{eqn:LinftyBoundfor5} 
        and \eqref{eqn:AijkBounds}, we deduce 
		\begin{align*}
           \E\left[\widetilde{W}_{4} \mathbf{1}_{[B_1\cap B_2 \cap A_{0,1,1} \cap \{\sigma_{y}=1\}\cap B_{3}]} \right]
           &= O(\delta^{-2}) \,  \E[W_{4} \mathbf{1}_{[B_1\cap B_2 \cap A_{0,1,1} \cap \{\sigma_{y}=1\}\cap B_{3}]} ]\\
           &=
         O(\delta^{-3})\PR(A_{0,1,1})  = O(1).
		\end{align*} 
		
		It remains to consider the case where $\eta_{j}\ge 1-O(\delta)$ for exactly one $j\in\{1,2\}$ and exactly one $j\in\{3,4\}$. We consider three sub-cases depending on whether we have successful reconstruction, moderate failure, or severe failure at $\eta_{4}$. 
  
        First, note that if $\eta_4\ge 1-O(\delta)$, then by Assumption \ref{assumption1} and Claim \ref{claim:strongopposites.}, we get 
\begin{align}\label{eq:xi_5_lower_bd_011}
        \xi_5 = \htheta_4 q(\eta_4,\xi_4) = (1-O(\delta)) \, q(1-O(\delta), -1+\Omega(\delta)) \ge -\kappa \textup{ for some }\kappa\in(0,1).
		\end{align} Applying Claim \ref{claim:infxy} gives $(1+\xi_5\eta_5)^2 \ge (1-\kappa)^2$. 
  
  Similarly, if there is a moderate failure at $\eta_4$ (i.e., in particular, $\eta_4 \ge -c_{\ref{eqn:antiReconstruction}}$), then by our assumption 
        $\eta_{3} \ge 1-O(\delta)$
        and so by the same argument as in \eqref{eq:xi_5_lower_bd_011}, 
        $\xi_4 = \hat{\theta}_{3} q(\eta_{3},\xi_{3}) \ge -\kappa$ for some (perhaps different) $\kappa\in(0,1)$. Therefore, by Claim \ref{claim:qRecurseClaim} there exists a $\kappa'\in(0,1)$ such that 
\begin{equation}\nonumber\label{eqn:moderateEta4}
			\xi_5 = \htheta_4 q(\eta_4,\xi_4)\ge (1-O(\delta)) \, q(-c_{\ref{eqn:antiReconstruction}}, -\kappa)  \ge -\kappa'.
		\end{equation}  
  By Claim \ref{claim:infxy}, we then obtain that $(1+\xi_5\eta_5)^2 \ge (1-\kappa')^2$ on the subcase where there is a moderate failure. But this implies
  \begin{equation*}
     \frac{1}{(1+\xi_5\eta_5)^2} \mathbf{1}_{[B_1\cap B_2\cap A_{0,1,1} \cap \{\sigma_{y}=1\} \cap \{\eta_4\ge -c_{\ref{eqn:antiReconstruction}}\}]} \le  \max\left(\frac{1}{(1-\kappa)^2}, \frac{1}{(1-\kappa')^2}\right) = O(1).
  \end{equation*} Hence by Proposition \ref{prop:twoTerms}, we see that
		\begin{align}
	\nonumber		& \widetilde{W}_{4} \mathbf{1}_{[B_1\cap B_2\cap A_{0,1,1} \cap \{\sigma_{y}=1\} \cap \{\eta_4\ge -c_{\ref{eqn:antiReconstruction}}\}]} \\
			\nonumber &\qquad \le O(1) 
    \sup_{|\xi_1|\le 1-2c_{\ref{eqn:pHatBounds}}\delta} 
   F(\htheta_3,\eta_3,\eta_4,\xi_3)
    F(\htheta_1,\eta_1,\eta_2,\xi_1) \mathbf{1}_{A_{0,1,1}} = O(\delta^{-2})\mathbf{1}_{A_{0,1,1}}.\label{eqn:moderateEta4.1}
		\end{align}

		The last sub-case to consider is when $\eta_4$ has a severe failure of reconstruction. In this case, our argument proceeds the same as on the event $A_{1,1,0}\cap \{\textup{the flip occurred at }w_4\}$, which we analyzed above. Indeed, in that analysis, we only used the fact that $\eta_4\ge -1+2c_{\ref{eqn:pHatBounds}}\delta$ (which is always true under Assumption \ref{assumption1}) and 
        $\eta_1,\eta_2,\eta_3\ge -c_{\ref{eqn:antiReconstruction}}$ (since there is no more severe failure),  which also holds in this situation. In particular, the analysis leading up to \eqref{eqn:a110part2} remains valid and we obtain
		\begin{align*}
			&
   \widetilde{W}_4\mathbf{1}_{B_1\cap B_2 \cap A_{0,1,1}\cap \{\sigma_{y} = 1\}\cap \{\eta_4\le -c_{\ref{eqn:antiReconstruction}}\}}\\
			&\qquad = \sup_{|\xi_1|\le 1-2c_{\ref{eqn:pHatBounds}}\delta}\frac{1}{(1+\xi_1\tilde{\eta}_1)^2} \prod_{j=1}^4 \frac{(1-\eta_j^2)}{(1+\htheta_j\eta_j\tilde{\eta}_{j+1})^2} \mathbf{1}_{B_1\cap B_2 \cap A_{0,1,1}\cap \{\sigma_{y} = 1\}\cap \{\eta_4\le -c_{\ref{eqn:antiReconstruction}}\}}\\
			&\qquad =  \frac{1}{\Omega(\delta^4)} \frac{O(\delta^2)}{(1-\kappa)^6} \mathbf{1}_{A_{0,1,1}}.
		\end{align*}
		As $A_{0,1,1}$ has probability at most $O(\delta^3)$ by \eqref{eqn:AijkBounds}, the last expression has expectation of order $O(\delta)$. We have established the desired bound \eqref{eqn:tildeW4L1bound1} for all cases and hence completed the proof.
	\end{proof}

	\subsection{Proof of Lemma \ref{lem:5terms}: The $R_r$ term}

     In this section, we prove the bounds on the first two moments of $R_r$ stated in Lemma \ref{lem:5terms}. This will complete the proof of Lemma \ref{lem:5terms}.

	\begin{proof}[\textbf{Proof of Lemma \ref{lem:5terms}: the $R_r$ term}]

        We break the proof into three separate cases $r = 1, 2, 3$. We begin with the expectation of $R_{r}$. 
		
	\medskip\noindent\textbf{Case $r=1$:}	If $r = 1$, we have $R_1 = \frac{1-\eta_0^2}{(1+\xi_0\eta_0)^2}$. Hence,
  \begin{equation}
      R_1 \mathbf{1}_{\{\xi_0\eta_0\ge 0\}} \le 1.\label{eqn:R1boundonexi_0eta_0ge0}
  \end{equation}
  This yields 
    \begin{align}\label{eq:E_R1_first_reduction}
            \nonumber\E_{\param^{*}} \left[R_{1}\right] & 
            \le 1 + \E_{\param^{*}}[R_{1} \mathbf{1}_{[\xi_{0}\eta_{0}<0]}]. 
    \end{align}
    Hence we now focus on analyzing $R_{1}$ on the event $\{\xi_0\eta_0<0\}$. We will decompose this event according to the value of $\sigma_{w_{0}}\eta_{0}$ as
    \begin{align}
       \{ \xi_{0}\eta_{0}<0 \} = E_{1}\cup E_{2} \cup E_{3} \cup E_{4},
    \end{align}
    where 
    \begin{align*}
        E_{1} &:= \{ \xi_{0}\eta_{0}<0,\,  \sigma_{w_{0}}\eta_{0}\ge 1-C_{\ref{eqn:etareconstruct}}\delta \}, \qquad E_{2} :=\{ \xi_{0}\eta_{0}<0,\,  \sigma_{w_{0}}\eta_{0}\in [0,  1-C_{\ref{eqn:etareconstruct}}\delta) \}, \\
        E_{3}&:= \{ \xi_{0}\eta_{0}<0,\,  \sigma_{w_{0}}\eta_{0}\in [-c_{\ref{eqn:antiReconstruction}},  0) \}, \qquad E_{4} := \{ \xi_{0}\eta_{0}<0,\,  \sigma_{w_{0}}\eta_{0}< -c_{\ref{eqn:antiReconstruction}} \}.
    \end{align*}
    Below we will bound $\PR_{\param^{*}}(E_{i})$ and  the value of $R_{1}$ on each $E_{i}$. 
    
    First we claim that 
  \begin{equation}\label{eqn:xieta0}
      \PR(\xi_0\eta_0 < 0) = O(\delta). 
  \end{equation} 
  Indeed note the inclusion 
  \begin{equation*}
      \{\xi_0\eta_0 \ge 0\} \supset \{\sigma_{y_{-1}}\xi_0 \ge 1-C_{\ref{eqn:etareconstruct}}\delta\} \cap \{\sigma_{w_0}\eta_0 \ge 1-C_{\ref{eqn:etareconstruct}} \delta\} \cap \{ \sigma_{w_0} = \sigma_{y_0} = \sigma_{y_{-1}}\}. 
  \end{equation*}
  The three events in the right-hand side above have probability $1-O(\delta)$ by Corollary \ref{cor:eta} and Assumption \ref{assumption1}, and they are all independent by Claim \ref{claim:unsigned_mag}. So \eqref{eqn:xieta0} follows.

  Second, we claim that 
  \begin{equation} \label{eqn:modetaandnofip}
      \PR_{\param^{*}}\left( E_{2}\right) = O(\delta^2).
  \end{equation}    
  To see this, note if $\xi_0\eta_0<0$ but $\sigma_{w_0} \eta_{0}\ge 0$ then $\sigma_{w_0} \xi_0  <0$. Since
  \begin{equation*}
  \sigma_{w_0} \xi_0 = (\sigma_{w_0} \sigma_{y_{-1}}) (\sigma_{y_{-1}} \xi_0) < 0
  \end{equation*} either $\sigma_{w_0} \neq \sigma_{y_{-1}}$ and $\sigma_{y_{-1}} \xi_0>0$  or $\sigma_{w_0} = \sigma_{y_{-1}}$ and $\sigma_{y_{-1}} \xi_{0}<0$. Therefore
  \begin{align*} 
      \PR&\left( \xi_0\eta_0  < 0 \textup{ and } \sigma_{w_0}\eta_0 \in[0,1-C_{\ref{eqn:etareconstruct}}\delta)\right) 
    \\&\le \PR(\sigma_{w_0}\eta_0 \in[0,1-C_{\ref{eqn:etareconstruct}}\delta)\textup{ and }\sigma_{w_0}\neq \sigma_{y_{-1}}) + \PR(\sigma_{w_0}\eta_0 \in[0,1-C_{\ref{eqn:etareconstruct}}\delta )\textup{ and } \sigma_{y_{-1}} \xi_0  <0)\\
      &\le \PR(\sigma_{w_0}\eta_0 \in[0,1-C_{\ref{eqn:etareconstruct}}\delta)) \PR(\sigma_{w_0}\neq \sigma_{y_{-1}})+\PR(\sigma_{w_0}\eta_0 \in[0,1-C_{\ref{eqn:etareconstruct}}\delta)) \PR( \sigma_{y_{-1}}\xi_0 < 0)
  \end{align*}
  where we used the independence in Claim \ref{claim:unsigned_mag}. Equation \eqref{eqn:modetaandnofip} now follows by Corollary \ref{cor:eta}, Thm. \ref{thm:Robust} and Assumption \ref{assumption1}.
 
  Third, we note the following upper bounds on $R_{1}$: 
  \begin{align}
    \label{eqn:R1boundonosgiamwo1}  &R_1 \mathbf{1}_{[\{\sigma_{w_0}\eta_0\ge 1-C_{\ref{eqn:etareconstruct}}\delta\}]}\le \frac{2C_{\ref{eqn:etareconstruct}}\delta}{(2c_{\ref{eqn:pHatBounds}}\delta)^2}  = O(\delta^{-1})\qquad\textup{using a generic bound on the denominator}\\
   \label{eqn:R1boundonosgiamwo2}   &R_1 \le \frac{1}{(2c_{\ref{eqn:pHatBounds}}\delta)^2}  = O(\delta^{-2})\qquad\textup{using a generic bound on the denominator (again)}
  \end{align} and
  \begin{equation}
    \label{eqn:R1boundonosgiamwo3} R_1 \le \frac{1}{(1-c_{\ref{eqn:antiReconstruction}})^2} = O(1) \qquad\textup{ whenever }\sigma_{w_0} \eta_0 \in[-c_{\ref{eqn:antiReconstruction}},0] \textup{ and }\xi_0\eta_0<0.
  \end{equation}
  
  Using the above, we can  further bound the second term in  \eqref{eq:E_R1_first_reduction} as 
    \begin{align} 
            \nonumber\E_{\param^{*}} \left[R_{1}\right] & 
            =  1 
            + \underbrace{O(\delta^{-1})}_{\textup{by \eqref{eqn:R1boundonosgiamwo1}}} \, \PR(E_{1}) + \underbrace{O(\delta^{-2}) \, \PR(E_{4}) + O(\delta^{-2})\, \PR(E_{2})}_{\textup{both by \eqref{eqn:R1boundonosgiamwo2}}}+ \underbrace{O(1)}_{\textup{by \eqref{eqn:R1boundonosgiamwo3}}} \,\PR(E_{3}). 
	\end{align} 
Now note that $\PR(E_{1}\cap E_{3})=O(\delta)$ by \eqref{eqn:xieta0}, $\PR(E_{2})=O(\delta^{2})$ by \eqref{eqn:modetaandnofip}, and $\PR(E_{4})=O(\delta^{2})$ by the severe failure in Cor. \ref{cor:eta}. Hence the right-hand side above is of $O(1)$. This establishes the case $r = 1$.

	\medskip\noindent\textbf{Case $r=2$:}	We now turn to the case $r = 2$, which is relatively simple. Observe that by Proposition \ref{prop:twoTerms},
  \begin{align*}
      R_2 = \prod_{j=0}^1 \frac{1-\eta_j^2}{(1+\xi_j\eta_j)^2} & \le \sup_{|\xi_0|\le 1-2c_{\ref{eqn:pHatBounds}}\delta} \prod_{j=0}^1 \frac{1-\eta_j^2}{(1+\xi_j\eta_j)^2} \\
      &\le  O(1)\mathbf{1}_{[|\eta_1|,|\eta_0|\ge 1-O(\delta)]}+O(\delta^{-1}) \mathbf{1}_{[|\eta_1|< 1- O(\delta) \textup{ or } |\eta_0| <  1-O(\delta)]}.
  \end{align*}
  Hence by taking expectation and using Corollary \ref{cor:eta}, 
  \begin{equation}\label{eqn:supr2}
       \E_{\param^{*}}\left[\sup_{|\xi_0|\le 1-2c_{\ref{eqn:pHatBounds}}\delta} \prod_{j=0}^1 \frac{1-\eta_j^2}{(1+\xi_j\eta_j)^2} \right] = O(1).
  \end{equation}

	\medskip\noindent\textbf{Case $r=3$:}	The case $r =3$ follows from 
  \begin{equation*}
      R_3 = \prod_{j=0}^2 \frac{1-\eta_j^2}{(1+\xi_j\eta_j)^2} \le \left(\sup_{|\xi_1|\le 1-2c_{\ref{eqn:pHatBounds}}\delta} \prod_{j=1}^2 \frac{1-\eta_j^2}{(1+\xi_j\eta_j)^2}  \right) R_1.
  \end{equation*} 
  Indeed, these two terms are independent of each other by the same argument leading to Lemma \ref{lem:independentWj}. Hence taking expectation and combining with \eqref{eqn:supr2} (with an index shift) and $\E_{\param^{*}}[R_{1}]=O(1)$ (the $r=1$ case above), we get $\E_{\param^{*}}[R_{3}]= O(1)$. 

  Finally, it follows from the above analysis that, for all $r=1,2,3$, 
\begin{align*}\label{eqn:suprbound2}
R_r = O(\delta^{-3}). 
\end{align*} 
Therefore the second moment bounds follow from the first moment bounds using \eqref{eqn:genHolder}.
		\end{proof}

		\subsection{Proof of Lemma \ref{lem:k123}}

        Finally in this section, we prove Lemma \ref{lem:k123}, which bounds the first two moments of the entries in the Hessian of the population log-likelihood function for two edges that are at most three edges apart in the tree.

		\begin{proof}[\textbf{Proof of Lemma }\ref{lem:k123}]

        For this proof we will use the notations introduced in Fig. \ref{fig:TREEhess}. For instance, recall that we are analyzing the Hessian of the log-likelihood corresponding to the edges $e=\{x,y\}$ and $f=\{u,v\}$, and we denoted a shortest path between them by the sequence of adjacent nodes $y_{N+1},y_{N},\dots,y_{0},y_{-1}$, where $y_{N+1}=x$, $y_{N}=y$, $y_{0}=u$, and $y_{-1}=v$. Here $N$ is the shortest path distance between $e$ and $f$.

        Recall, using \eqref{eq:def_eta_xi} and \eqref{eq:Hessian_offdiag_bd}, that
        \begin{equation}\label{eqn:tildeRdefiBoundforhess}
            \left|\frac{\partial^2}{\partial \htheta_e \partial \htheta_f} \ell(\hparam,\sigma|_{L})\right| \le \frac{1}{(1+\xi_{N+1}\eta_{N+1})^2} \prod_{j=0}^N \frac{1-\eta_j^2}{(1+\eta_j\xi_j)^2}=:\widetilde{R}_N,
        \end{equation} 
        where $N=0,1,2$. We will prove the desired moment bounds for the random variable in the right-hand side above. 
        We begin by establishing the expectation bounds for three cases depending on $N=0,1,2$.

        \medskip\noindent\textbf{Case $N=0$:} This is the case when the edges $e$ and $f$ share a vertex as $e=\{y_{1},y_{0}\}$ and $f=\{y_{0},y_{-1}\}$. 
        Let us write $g = \{w_0,y_0\}$ as the edge between $y_0 = y = u$ and $w_0$. As there is just a single ``$w$'' vertex, we will write $w = w_0$ (see Fig. \ref{fig:TREEhess} for illustration).
        We begin with some simplifications. First observe that by Lemma \ref{lem:derivative}, we have
        \begin{align}\label{eqn:N=0swap}
            \left|\frac{\partial^2}{\partial \htheta_e \partial \htheta_f} \ell(\hparam,\sigma)\right| \le \begin{cases}  \displaystyle\frac{1- (\htheta_{g} Z_{w})^2}{(1+\htheta_e Z_x q(\htheta_g Z_{w} ,\htheta_{f} Z_v))^2(1+\htheta_f \htheta_g Z_w Z_v)^2}\\
            \displaystyle 
\frac{1- (\htheta_{g} Z_{w})^2}{(1+\htheta_f Z_v q(\htheta_g Z_{w}, \htheta_{g} Z_x))^2(1+\htheta_g \htheta_e Z_w Z_x)^2}
            \end{cases}.
        \end{align} 
        The second inequality follows from the first by the symmetry of the mixed partial derivatives $\frac{\partial^2}{\partial \htheta_e \partial \htheta_f} \ell(\hparam,\sigma)= \frac{\partial^2}{\partial \htheta_f \partial \htheta_e} \ell(\hparam,\sigma|_{L})$. By using the formula for the Hessian in \eqref{eqn:hessianTerms} in Lemma \ref{lem:derivative} and since magnetizations change sign when the spins change sign, 
        we can see that 
\begin{equation}\label{eqn:zposequaldist}
        \left(\left|\frac{\partial^2}{\partial \htheta_e \partial \htheta_f} \ell(\hparam,\sigma|_{L})\right|,(Z_a;a\in T) \right) \overset{d}{=} \left(\left|\frac{\partial^2}{\partial \htheta_e \partial \htheta_f} \ell(\hparam,-\sigma)\right|,(-Z_a;a\in T)\right)
        \end{equation} where $\sigma\sim \PR_{\param^{*}}$. It is therefore sufficient to show 
        \begin{equation*}\label{eqn:Zxpos}
            \E_{\param^{*}}\left[\left|\frac{\partial^2}{\partial \htheta_e \partial \htheta_f} \ell(\hparam,\sigma)\right|\mathbf{1}_{[Z_x\ge 0]}\right] =O(1).
        \end{equation*}

        We decompose the event $\{Z_x\ge 0\}$ as
        \begin{align*}
            \{Z_x\ge 0\} = \{Z_x,Z_w,Z_v\ge 0\} \cup \{Z_x\ge 0, Z_w,Z_v<0\} \cup \{Z_x, Z_w \ge 0 , Z_v< 0\} \cup \{Z_x, Z_v\ge 0, Z_w<0\}.
        \end{align*}
        It is clear that if $a,b\ge 0$ then $q(a,b)\ge 0$. Hence from the first inequality in \eqref{eqn:N=0swap}, 
        \begin{equation*}
            \textup{if } Z_{x},Z_w,Z_v \ge 0 \qquad\textup{ then }\left|\frac{\partial^2}{\partial \htheta_e \partial \htheta_f} \ell(\hparam,\sigma)\right| \le 1.
        \end{equation*}
        Also, it is easy to see that if $Z_{x}\ge 0$ and $Z_w,Z_v<0$ then $\htheta_f\htheta_g Z_wZ_y>0$ and $1+Z_x\htheta_e q(\htheta_g Z_w, \htheta_f Z_v) \ge 1-\htheta_e Z_x$. This and the  first inequality in \eqref{eqn:N=0swap} give
        \begin{equation}\label{eqn:Zx+Zy-Zw-}
            \textup{if } Z_x\ge 0\textup{ and }Z_w<0, Z_v<0\textup{ then } \left|\frac{\partial^2}{\partial \htheta_e \partial \htheta_f} \ell(\hparam,\sigma)\right| \le \frac{1-(\htheta_g Z_w)^2}{(1-\htheta_e Z_x)^2}.
        \end{equation}
        Note that the right-hand side 
        satisfies 
        \begin{equation*}
            \frac{1-(\htheta_g Z_w)^2}{(1-\htheta_e Z_x)^2} \le \mathbf{1}_{[\sigma_x = 1]} \frac{1-(\htheta_g \sigma_w Z_w)^2}{(1-\htheta_e\sigma_x  Z_x)^2} + \mathbf{1}_{[\sigma_x = -1]} \frac{1
            }{(1+\htheta_e\sigma_x  Z_x)^2}.
        \end{equation*}

        Now observe that by Thm. \ref{thm:Robust}, and the independence Claim
        \ref{claim:unsigned_mag}, we have
        \begin{equation*}\label{eqn:sigmax=-1forz}
            \E_{\param^{*}}\left[\mathbf{1}_{[\sigma_x = -1]} \frac{1}{(1+\htheta_e \sigma_x Z_x)^2}\right] \le \frac{1}{2} \left(\frac{1}{(1-c_{\ref{eqn:antiReconstruction}})^2}\PR(\sigma_xZ_x\ge -c_{\ref{eqn:antiReconstruction}}) 
 + \frac{1}{(2c_{\ref{eqn:pHatBounds}}\delta)^2} \PR(\sigma_x Z_x< -c_{\ref{eqn:antiReconstruction}})\right) = O(1) .
        \end{equation*} The factor of $\frac12$ comes from $\PR(\sigma_x = -1) = \frac12$.

        Next, we claim
       \begin{equation}\label{eqn:sigmaxandZv}
           \E\left[\mathbf{1}_{[\sigma_x = 1, Z_w<0, Z_v<0]} (1-(\htheta_g \sigma_w Z_w)^2)\right] = O(\delta^2). 
       \end{equation} 
       If we can show this, then using the generic bound $(1-\htheta_e\sigma_x  Z_x)^{2} \ge (2c_{\ref{eqn:pHatBounds}}\delta)^2$, we get
        \begin{equation*}
           \E\left[\mathbf{1}_{[\sigma_x = 1, Z_w<0, Z_v<0]} \frac{1-(\htheta_g \sigma_w Z_w)^2}{(1-\htheta_e\sigma_x  Z_x)^2}\right] = O(1). 
       \end{equation*} 
     Then by \eqref{eqn:Zx+Zy-Zw-}, we can conclude
       \begin{align*}\label{eqn:Zx+Zy-Zw-expectation}
            \E_{\param^{*}}&\left[ \left|\frac{\partial^2}{\partial \htheta_e \partial \htheta_f} \ell(\hparam,\sigma)\right| \mathbf{1}_{[Z_x\ge 0, Z_w<0, Z_v<0]}\right] = O(1).
        \end{align*}
        
        Let us now establish \eqref{eqn:sigmaxandZv} using independence provided by  Claim \ref{claim:unsigned_mag}. Recall that either $\sigma_y = +1$ or $\sigma_y = -1$. Hence
        \begin{align*}
        \mathbf{1}_{[\sigma_x = 1, Z_w<0, Z_v<0]} (1-(\htheta_g \sigma_w Z_w)^2) \le \mathbf{1}_{[\sigma_y=1, Z_w<0, Z_v<0]}  + \mathbf{1}_{[\sigma_x\sigma_y = -1]} (1-(\htheta_g \sigma_w Z_w)^2).
        \end{align*}
        Using the second bound in Cor. \ref{cor:Robust} to bound the expectation of the first term and using the independence in Claim \ref{claim:unsigned_mag} and Assumption \ref{assumption1} to bound the expectation of the second term, we see
        \begin{align*}
            \E_{\param^{*}} &\left[\mathbf{1}_{[\sigma_x = 1, Z_w<0, Z_v<0]} (1-(\htheta_g \sigma_w Z_w)^2)\right] \le O(\delta^2)+ O(\delta) \E_{\param^{*}} [1-(\htheta_g \sigma_w Z_w)^2] .
        \end{align*}
        The rightmost expectation is easily seen to be $O(\delta)$ by Thm. \ref{thm:Robust}. This establishes \eqref{eqn:sigmaxandZv}.

        One can redo the previous analysis for $\{Z_x\ge 0, Z_w, Z_v<0\}$ on the event $\{Z_x,Z_w\le 0, Z_v>0\}$ using the second representation in \eqref{eqn:N=0swap}, to see that
        \begin{equation*}
            \E_{\param^{*}} \left[ \left|\frac{\partial^2}{\partial \htheta_e \partial \htheta_f} \ell(\hparam,-\sigma|_{L})\right| \mathbf{1}_{[Z_x\le 0, Z_w\le 0, Z_v> 0]}\right]= O(1)
        \end{equation*}
        as well. (The weak and strict inequalities play no major role in the prior analysis.)
        Then by using the equality in distribution in \eqref{eqn:zposequaldist}, it follows that
        \begin{align*}
            \E_{\param^{*}}\left[ \left|\frac{\partial^2}{\partial \htheta_e \partial \htheta_f} \ell(\hparam,\sigma|_{L})\right| \mathbf{1}_{[Z_x\ge 0, Z_w\ge 0, Z_v<0]}\right] &=  
            \E_{\param^{*}} \left[ \left|\frac{\partial^2}{\partial \htheta_e \partial \htheta_f} \ell(\hparam,-\sigma|_{L})\right| \mathbf{1}_{[Z_x\le 0, Z_w\le 0, Z_v> 0]}\right]= O(1)
        \end{align*}

        It remains to show (for the case $N = 0$) that
        \begin{align}\label{eqn:Zx+Zv+Zw-expectation}
            \E_{\param^{*}}&\left[ \left|\frac{\partial^2}{\partial \htheta_e \partial \htheta_f} \ell(\hparam,\sigma|_{L})\right| \mathbf{1}_{[Z_x, Z_v\ge 0, Z_w<0]}\right] = O(1).
        \end{align}
        To do this, we aim to use Proposition \ref{prop:twoTerms}; however,  a straightforward application is a bit difficult because $\eta_1 = \eta_{N+1} = Z_x$ in \eqref{eq:def_eta_xi} and whenever $x$ is a leaf the magnetization $Z_x = \sigma_x\in\{\pm1\}$ a.s. Recall that
        \begin{equation*}
            F(a, b, c,d ) = \frac{(1-c^2)(1-b^2)}{(1+a c \, q(b,d))^2 (1+bd)^2}.
        \end{equation*}
        Therefore,
        \begin{align}\label{eq:F_sqrt_eta0_xi0}
          F(\htheta_e^{1/2}, \eta_0, \htheta_e^{1/2}\eta_1, \xi_0) =   \frac{1-({\htheta_e^{1/2}}\eta_1)^2}{(1+\eta_1\xi_1)^2} \frac{1-\eta_0^2}{(1+\xi_0\eta_0)^2} = \left(1-(\htheta_e^{1/2} \eta_1)^2\right)\widetilde{R}_0,
        \end{align}
        where we have used the identity 
        \begin{equation*}
            1+\eta_1\xi_1 = 1+ \eta_1 \htheta_e q(\xi_0,\eta_0) = 1+({\htheta_e^{1/2}})({\htheta_e^{1/2}} \eta_1 ) q(\eta_0,\xi_0).
        \end{equation*}
        Now since  $\htheta_e^{1/2}\in [1-2C_{\ref{eqn:pHatBounds}}\delta, 1-c_{\ref{eqn:pHatBounds}}\delta]$ (i.e. $\htheta^{1/2}_e = 1-\Theta(\delta)$), $\htheta_e^{1/2}\eta_1,\eta_0\in[-1+c_{\ref{eqn:pHatBounds}}\delta, 1-c_{\ref{eqn:pHatBounds}}\delta]$, and $|\xi_0| = |\htheta_f Z_v |\le 1-2c_{\ref{eqn:pHatBounds}}\delta$ we can apply Proposition \ref{prop:twoTerms}  along with \eqref{eq:F_sqrt_eta0_xi0} as follows:
        \begin{align}\label{eqn:N=0.1}
            \widetilde{R}_0 = \frac{1}{1-({\htheta_e^{1/2}}\eta_1)^2} \left( {\frac{1-({\htheta_e^{1/2}}\eta_1)^2}{(1+\eta_1\xi_1)^2} \frac{1-(\eta_0)^2}{(1+\xi_0\eta_0)^2}} \right)& \le O(\delta^{-1})F(\htheta_e^{1/2}, \eta_0, \htheta_e^{1/2}\eta_1, \xi_0) \\
           \nonumber  &=  \begin{cases}
               O(\delta^{-2}) &: \textup{always}\\
               O(\delta^{-1})&: \textup{if $|\eta_0|,|\eta_1|\ge 1-O(\delta)$}.
            \end{cases}. 
        \end{align}
        In particular, by  \eqref{eqn:tildeRdefiBoundforhess}, almost surely, 
        \begin{align*}
             \left|\frac{\partial^2}{\partial \htheta_e \partial \htheta_f} \ell(\hparam,\sigma|_{L})\right| = O(\delta^{-2}). 
        \end{align*}

        As $\PR(\sigma_x Z_x, \sigma_v Z_v \le 1-O(\delta^2))= O(\delta^2)$ by Claim \ref{claim:unsigned_mag} and Thm. \ref{thm:Robust}, it suffices to suppose that either $\sigma_x Z_x\ge 1-O(\delta^2)$ or $\sigma_vZ_v\ge 1-O(\delta^2)$. However, by using \eqref{eqn:N=0swap}, we can assume without loss of generality that $\sigma_x Z_x\ge 1-O(\delta^2)$.
        Also note that by the second inequality in  Cor. \ref{cor:Robust}, $\PR(\sigma_y = -1, Z_v, Z_x \ge 0) =O(\delta^2)$ and so it suffices to suppose that $\sigma_y = +1$. The last simplification is that 
        by Thm. \ref{thm:Robust}, we can suppose that each $\sigma_x Z_x, \sigma_v Z_v, \sigma_w Z_w \ge -c_{\ref{eqn:antiReconstruction}}$. Therefore \eqref{eqn:Zx+Zv+Zw-expectation} follows once we show
        \begin{equation*}\label{eqn:caseN=0.5}
            \E_{\param^{*}} \left[\left| \frac{\partial^2}{\partial\theta_e\partial \theta_f} \ell(\hparam,\sigma|_{L})\right|\mathbf{1}_{\mathcal{A}} \right] = O(1),
        \end{equation*}
        where
        \begin{equation*}
            \mathcal{A} := \{ \sigma_y = +1, \sigma_xZ_x\ge 1- O(\delta^2), \sigma_w Z_w \ge -c_{\ref{eqn:antiReconstruction}}, \sigma_v Z_v \ge -c_{\ref{eqn:antiReconstruction}}\} \cap \{ Z_x, Z_v \ge 0, Z_w<0\}.
        \end{equation*}

        In fact, we want to simplify a little more. More precisely, we claim that it suffices to 
        show
        \begin{equation}\label{eqn:caseN=0.6}
            \E_{\param^{*}} \left[\left|\frac{\partial^2}{\partial\theta_e\partial \theta_f} \ell(\hparam,\sigma|_{L}) \right| \mathbf{1}_{\mathcal{A}'}\right] = O(1)
        \end{equation} on the smaller event \begin{equation*}
            \mathcal{A}' := \mathcal{A} \cap \{ \sigma_w Z_w\in[-c_{\ref{eqn:antiReconstruction}}, 1-O(\delta^2)),\, \sigma_vZ_v\ge 1-O(\delta^2)\}.
        \end{equation*}
        Indeed, if $\sigma_w Z_w \ge 1-O(\delta^2)$, then $|\eta_0| = \htheta_g |Z_w|\ge 1-O(\delta)$ and so we can use the $O(\delta^{-1})$ bound in \eqref{eqn:N=0.1} to get 
        \begin{equation*}
            \widetilde{R}_0 \mathbf{1}_{\mathcal{A} \cap \{\sigma_w Z_w\ge 1-C_{\ref{eqn:antiReconstruction}}\delta^2\}} \le O(\delta^{-1}) \mathbf{1}_{[\sigma_y=1, Z_w< 0]}.
        \end{equation*} By Cor. \ref{cor:Robust}, the expectation of the right-hand side is of $O(1)$ and so we do not need to consider $\mathcal{A}\cap \{\sigma_wZ_w\ge 1-C_{\ref{eqn:Reconstruct}}\delta^2\}$.
        By Claim \ref{claim:unsigned_mag} and Thm. \ref{thm:Robust}
        \begin{equation*}
            \PR(\mathcal{A}\cap \{\sigma_w Z_w \in[-c_{\ref{eqn:antiReconstruction}}, 1-O(\delta^2)],  \, \sigma_v Z_v< 1-O(\delta^2)\}) = O(\delta^2)
        \end{equation*} 
        and so, using \eqref{eqn:N=0.1}, the expectation of $\widetilde{R}_{0}$ on $\mathcal{A}\cap \{\sigma_v Z_v< 1-O(\delta^2)\}$ is also $O(1)$.
        
         Note that on $\mathcal{A}'$ we have $\sigma_y = +1$ and $Z_w<0$. If $\sigma_w Z_w\in [0,1-O(\delta^2))$ then $\sigma_w = -1$ and there was a flip between $w$ and $y$. By Claim \ref{claim:unsigned_mag}, this flip is independent of the failed reconstruction $\sigma_w Z_w< 1-O(\delta^2)$, and hence $\mathcal{A}'\cap \{\sigma_w Z_w\in[0,1-C_{\ref{eqn:Reconstruct}}\delta^2)\}$ has probability $O(\delta^2)$ by Thm. \ref{thm:Robust}. Finally, on $\mathcal{A}'\cap \{\sigma_wZ_w\in[-c_{\ref{eqn:antiReconstruction}},0]\}$, noting that $Z_{w}Z_{v}\le 0$ and  $Z_{x}\ge 0$ on $\mathcal{A}'$, we have
        \begin{align*}
           &1+\htheta_g Z_w \htheta_f Z_v \ge 1 - |Z_{v}Z_{w}| \ge 1 - |Z_{w}| 
           \ge 1-c_{\ref{eqn:antiReconstruction}}.
    \end{align*} 
    Then by the monotonicity of $q$,  Assumption \ref{assumption1}, and Claim \ref{claim:strongopposites.}, 
    \begin{align*}\label{eq:q_Zw_Zv_lowerbd}
      \nonumber  q(\htheta_g Z_w, \htheta_f Z_v) &\ge q\Big(-\htheta_gc_{\ref{eqn:antiReconstruction}} , \htheta_f (1-C_{\ref{eqn:Reconstruct}}\delta^2)\Big)  \\
        &\ge q\Big(-  c_{\ref{eqn:antiReconstruction}} , (1-O(\delta)) (1-O(\delta^2))\Big) \ge -\kappa
    \end{align*}
    for some constant $\kappa\in(0,1)$. 
    Hence using that $\htheta_{e}Z_{x}\in [0,1]$ on $\mathcal{A}'$, 
    \begin{align*}
       &1+\htheta_e Z_x q(\htheta_g Z_w, \htheta_f Z_v) \ge 1-\kappa
        \end{align*}
    Thus the denominator of the right-hand side of the first bound in \eqref{eqn:N=0swap} is positive and bounded away from 0. 
    Hence,  
        \begin{equation*}
            \left|\frac{\partial^2}{\partial\theta_e\partial \theta_f} \ell(\hparam,\sigma|_{L}) \right| \mathbf{1}_{\mathcal{A}'\cap \{\sigma_wZ_w\in[-c_{\ref{eqn:antiReconstruction}},0]\}} \le \frac{1}{(1-\kappa)^2 (1-c_{\ref{eqn:antiReconstruction}})^2} = O(1)
        \end{equation*}
        is bounded. This establishes \eqref{eqn:caseN=0.6}. This finishes the proof of \eqref{eqn:nearDiag} for $N = 0$.

        \medskip\noindent\textbf{Case $N = 1$:} This case  starts with a similar upper bound as is found in \eqref{eqn:N=0.1}. Recall that $\xi_j$ and $\eta_i$ are defined by (see Figure \ref{fig:tree_N1})
        \begin{align*}
            \eta_i = 
            \begin{cases}
                \htheta_{\{y_i,w_i\}}Z_{w_i} &:i = 0,1\\
                Z_{x} &: i=2
            \end{cases} 
            ,\qquad 
            \xi_i 
            = \begin{cases}
                \htheta_{f} Z_{v} &: i = 0\\
                \htheta_{i-1} \, q(\xi_{i-1} , \eta_{i-1}) &: i = 1,2
            \end{cases}
            ,\qquad 
            \htheta_{i}=  \htheta_{\{y_i, y_{i-1}\}}. 
        \end{align*}
        \begin{figure}[h!]
		\centering
		\includegraphics[width=0.7 \linewidth]{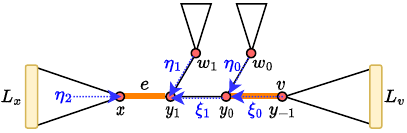}
		\caption{Depiction of the random variables $\xi_{0},\xi_{1},\eta_{0},\eta_{1},\eta_{2}$ associated with the $N=1$ case.}
		\label{fig:tree_N1}
	\end{figure}

    \noindent Also, let $\tilde{\eta}_1,\tilde{\eta}_2$ be as in Lemma \ref{lem:swap}:
        \begin{align*}
            \tilde{\eta}_2 = \eta_2= Z_{x}, \qquad  \tilde{\eta}_1 = q(\htheta_1 \tilde{\eta}_2, \eta_1).
        \end{align*}
        Then 
        \begin{align*}
            \displaystyle \left|\frac{\partial^2}{\partial \htheta_e\partial\htheta_f} \ell(\hparam,\sigma|_{L})\right|&\le \frac{(1-\eta_1^2)(1-\eta_0^2)}{(1+\xi_2\eta_2)^2(1+\xi_1\eta_1)^2(1+\xi_0\eta_0)^2} \qquad \textup{by \eqref{eqn:tildeRdefiBoundforhess}}\\
            &= \frac{(1-\eta_1^2) (1-\eta_0^2)}{(1+\htheta_1 \eta_1\widetilde{\eta_2})^2 (1+\xi_1 \tilde{\eta}_1)^2 (1+\xi_0\eta_0)^2}\qquad \textup{by Lemma \ref{lem:swap} \textbf{(i)}}\\
            &=\frac{(1-\tilde{\eta}_1^2) (1-\eta_0^2)}{(1-(\htheta_1 \widetilde{{\eta}}_2)^2)(1+\xi_1 \tilde{\eta}_1)^2 (1+\xi_0\eta_0)^2} \qquad \textup{ by Lemma \ref{lem:swap} \textbf{(ii)}}\\
            &\le \frac{(1-\tilde{\eta}_1^2) (1-\eta_0^2)}{(2c_{\ref{eqn:pHatBounds}}\delta)(1+\xi_1 \tilde{\eta}_1)^2 (1+\xi_0\eta_0)^2},
        \end{align*} 
        where the last inequality follows from $\htheta_1\in [1-2C_{\ref{eqn:pHatBounds}}\delta, 1-2c_{\ref{eqn:pHatBounds}} \delta]$  (by Assumption \ref{assumption1}) and $\tilde{\eta}_2 = \eta_2\in[-1,1]$. 
        Note that
        \begin{equation*}
            \frac{(1-\tilde{\eta}_1^2) (1-\eta_0^2)}{(1+\xi_1 \tilde{\eta}_1)^2 (1+\xi_0\eta_0)^2} \le  \frac{\left(1-(\htheta_1^{1/2} \tilde{\eta}_1)^2\right) (1-\eta_0^2)}{{(1+\xi_1 \tilde{\eta}_1)^2 (1+\xi_0\eta_0)^2}} = F\left(\htheta_1^{1/2}, \eta_0, \htheta_1^{1/2} \tilde{\eta}_1, \xi_0\right),
        \end{equation*}
        where the equality follows from the same observation as in \eqref{eq:F_sqrt_eta0_xi0}. This shows that 
        \begin{align}\label{eq:hessian_off_bd_N1_pf1}
            \displaystyle \left|\frac{\partial^2}{\partial \htheta_e\partial\htheta_f} \ell(\hparam,\sigma|_{L})\right| \le \frac{1}{2c_{\ref{eqn:pHatBounds}}\delta} F\left(\htheta_1^{1/2}, \eta_0, \htheta_1^{1/2} \tilde{\eta}_1, \xi_0\right).
        \end{align}
        In order to bound the right-hand side above, we will use Proposition \ref{prop:twoTerms} in two cases.

        Define 
        \begin{equation*}
            \mathcal{B} := \{\sigma_{w_0}\eta_0,\sigma_{w_1}\eta_1,\sigma_x\eta_2, \sigma_v \xi_0 \ge 1-O(\delta), \sigma_x = \sigma_{w_1}\}.
        \end{equation*}
        This is the event that the signals at nodes $w_{0},w_{1},x,v$ have successful reconstructions and $\sigma_{x}=\sigma_{w_{1}}$. 
        Note that $\htheta_1^{1/2} = 1-\Theta(\delta)$ and $|\htheta_1^{1/2}\tilde{\eta}_1|, |\eta_0|, |\xi_0|\le 1-O(\delta)$ under Assumption \ref{assumption1}. (This follows from $\htheta_i=1-\Theta(\delta)$). Note that on $\mathcal{B}$, either $\eta_{1},\eta_{2}\ge 1-O(\delta)$ or $-\eta_{1},-\eta_{2}\ge 1-O(\delta)$, depending on $\sigma_{x}=+1$ or $-1$. 
        Hence by Claim \ref{claim:quickBound1}, we have 
        \begin{equation*}
           \left| \htheta_1^{1/2}\tilde{\eta}_1\right| \mathbf{1}_{\mathcal{B}} \ge (1-O(\delta)) \, q(1-O(\delta), 1-O(\delta)) \ge (1-O(\delta))(1-O(\delta^2)) \ge 1- O(\delta).
        \end{equation*}
         Thus the right-hand side of \eqref{eq:hessian_off_bd_N1_pf1} is $O(\delta^{-1})$ on $\mathcal{B}$ by Proposition \ref{prop:twoTerms} (2); on the complement, it is of $O(\delta^{-2})$ by Proposition \ref{prop:twoTerms} (1). That is, 
        \begin{align}\label{eqn:N=1}
            &\left|\frac{\partial^2}{\partial \htheta_e \partial \htheta_f}\ell(\hparam,\sigma|_{L})\right|\le O(1)\left(\delta^{-1} \mathbf{1}_{\mathcal{B}} + \delta^{-2} \mathbf{1}_{\mathcal{B}^c}\right).
        \end{align}

        We now claim that 
       \begin{align}\label{eqn:N=1.1}
           &\E_{\param^{*}} \left[\left|\frac{\partial^2}{\partial \htheta_e \partial \htheta_f} \ell(\hparam,\sigma|_{L}) \right|\mathbf{1}_{\mathcal{B}}\right] = O(1) \qquad \textup{and} \qquad \E_{\param^{*}} \left[\left|\frac{\partial^2}{\partial \htheta_e \partial \htheta_f} \ell(\hparam,\sigma|_{L}) \right|\mathbf{1}_{\mathcal{B}^c}\right] = O(1).
       \end{align}
       We first consider the 
       bound on the event $\mathcal{B}$. Let $\mathcal{B}'$ be the event that $\mathcal{B}$ holds and all involved nodes have the same spin: 
       \begin{equation*}
           \mathcal{B}' := \mathcal{B}\cap \{\sigma_{y_j}= \sigma_x, \sigma_{w_i} = \sigma_x \, \textup{ for }i=0,1\textup{ and }j = -1,0,1\}.
       \end{equation*}
       Then by a union bound and recalling that a flip has probability at most $C_{\ref{eqn:pBounds}}\delta$  by Assumption \ref{assumption1}, 
       \begin{equation*}
           \PR_{\param^{*}}(\mathcal{B}\setminus \mathcal{B}') \le 5 C_{\ref{eqn:pBounds}} \delta.
       \end{equation*} 
       Therefore by \eqref{eqn:N=1}, 
       \begin{align*}
           \E_{\param^{*}} \left[\left|\frac{\partial^2}{\partial \htheta_e \partial \htheta_f} \ell(\hparam,\sigma|_{L}) \right|\mathbf{1}_{\mathcal{B}}\right] \le \E_{\param^{*}} \left[\left|\frac{\partial^2}{\partial \htheta_e \partial \htheta_f} \ell(\hparam,\sigma|_{L}) \right|\mathbf{1}_{\mathcal{B}'}\right]  + O(1).
       \end{align*}
        Now on $\mathcal{B}'$ all signals have the same sign (as all the spins are the same and magnetizations are good approximates for the spins), so 
        \begin{equation*}
            1+\xi_j\eta_j \ge 1\qquad\textup{ for all }j =0,1,2.
        \end{equation*}
        It follows that all the denominators in $\widetilde{R}_{1}$ (see 
        \eqref{eqn:tildeRdefiBoundforhess}) are at least one (the numerators are always at most one), so we get $\widetilde{R}_1\mathbf{1}_{\mathcal{B}'}\le 1$. It follows that 
        \begin{equation*}
            \E_{\param^{*}} \left[\left|\frac{\partial^2}{\partial \htheta_e \partial \htheta_f} \ell(\hparam,\sigma|_{L}) \right|\mathbf{1}_{\mathcal{B}'}\right] \le 1.
        \end{equation*} 
        This establishes the first bound in  \eqref{eqn:N=1.1}.

        For the second bound in  \eqref{eqn:N=1.1}, recall that $\left|\frac{\partial^2}{\partial \htheta_e \partial \htheta_f}\ell(\hparam,\sigma|_{L})\right|\mathbf{1}_{\mathcal{B}^c}=O(\delta^{-2})$ by \eqref{eqn:N=1}. So we can safely disregard sub-events of $\mathcal{B}^{c}$ of probabilities at most $O(\delta^{2})$. Below are such events: 
        \begin{align*}
            &E_1 := \{\textup{at least one of }\sigma_x\eta_2,\sigma_{w_1}\eta_1, \sigma_{w_{0}}\eta_0 , \sigma_v \xi_0 < -c_{\ref{eqn:antiReconstruction}}\}\\
            &E_2 := \{\textup{at least two of }\sigma_x\eta_2,\sigma_{w_1}\eta_1, \sigma_{w_{0}}\eta_0 , \sigma_v \xi_0 \textup{ lie in } [-c_{\ref{eqn:antiReconstruction}}, 1-O(\delta)] \}\\
            &E_3 := \Big\{\textup{there are at least two flips across the edges in }\{e,f, \{y_1,y_0\},\{y_1,w_1\}, \{y_0,w_0\} \}\Big\}\\
            & E_4 := \{\textup{there is at least one flip across the edges in }\{e,f, \{y_1,y_0\},\{y_1,w_1\}, \{y_0,w_0\} \}\\
            &\qquad\qquad \qquad\textup{ and }\textup{at least one of }\sigma_x\eta_2,\sigma_{w_1}\eta_1, \sigma_{w_{0}}\eta_0 , \sigma_v \xi_0 \textup{ lies in } [-c_{\ref{eqn:antiReconstruction}}, 1-O(\delta)] \Big\}
        \end{align*}
        Indeed, by Thm. \ref{thm:Robust}, the independence in Claim \ref{claim:unsigned_mag}, and Assumption \ref{assumption1},
        \begin{align*}
            &\PR_{\param^{*}}(E_1) \le 4 C_{\ref{eqn:antiReconstruction}}\delta^2, &
            &\PR_{\param^{*}}(E_2) \le \binom{4}{2} c_{\ref{eqn:Reconstruct}}^2\delta^2, &
            &\PR_{\param^{*}}(E_3) \le \binom{5}{2} C_{\ref{eqn:pBounds}}^2\delta^2, & &\textup{and}&
            &\PR_{\param^{*}}(E_4) \le 20 c_{\ref{eqn:Reconstruct}} C_{\ref{eqn:pBounds}}\delta^2.
        \end{align*}
        Now define
        \begin{align*}
            \mathcal{B}'' = \mathcal{B}^c \cap (E_1\cup E_2\cup E_3\cup E_4)^c. 
        \end{align*}
        Then by \eqref{eqn:N=1}
        \begin{equation}\label{eqn:N=1.2}
            \E_{\param^{*}} \left[\left|\frac{\partial^2}{\partial \htheta_e \partial \htheta_f} \ell(\hparam,\sigma|_{L}) \right|\mathbf{1}_{\mathcal{B}^c}\right] \le \E_{\param^{*}} \left[\left|\frac{\partial^2}{\partial \htheta_e \partial \htheta_f} \ell(\hparam,\sigma|_{L}) \right|\mathbf{1}_{\mathcal{B}''}\right] + O(1).
        \end{equation}

        Note that $\mathcal{B}''$ requires either a single moderate failure but no severe failures and no flips \textit{or} a single flip and no failures of reconstruction. Therefore, we define the events 
        \begin{align*}
            \mathcal{B}_0'' &:=\mathcal{B}^c\cap \Big\{\textup{each of }\sigma_x\eta_2,\sigma_{w_1}\eta_1, \sigma_{w_{0}}\eta_0 , \sigma_v \xi_0 \ge -c_{\ref{eqn:antiReconstruction}},\\
        & \qquad\qquad\textup{ and }\textup{there are no flips across the edges in }\{e,f, \{y_1,y_0\},\{y_1,w_1\}, \{y_0,w_0\}\}\Big\},\\
        \mathcal{B}_1''&:= \mathcal{B}^c \cap \Big\{\textup{each of }\sigma_x\eta_2,\sigma_{w_1}\eta_1, \sigma_{w_{0}}\eta_0 , \sigma_v \xi_0 \ge1 -O(\delta),\\
        & \qquad\qquad\textup{ and }\textup{there is exactly one flip across the edges in }\{e,f, \{y_1,y_0\},\{y_1,w_1\}, \{y_0,w_0\}\}\Big\}.
        \end{align*}
        Note $\mathcal{B}''   \subset \mathcal{B}_0''\cup \mathcal{B}_1''$. 
        Observe that on $\mathcal{B}_0''$ it holds by Claim \ref{claim:qRecurseClaim} that 
        \begin{equation*}
            \eta_j,\xi_j\ge -\kappa
        \end{equation*} for some universal constant $\kappa\in(0,1)$. Hence, by Claim \ref{claim:infxy} $(1+\xi_j\eta_j)\ge 1-\kappa$. It follows that all the denominators in $\widetilde{R}_{1}$ (see 
        \eqref{eqn:tildeRdefiBoundforhess}) are at least $(1-\kappa)^2$ (the numerators are always at most one), so we get
        \begin{equation*}
            \widetilde{R}_1\mathbf{1}_{\mathcal{B}_0''} \le \frac{1}{(1-\kappa)^6} = O(1).
        \end{equation*}
        Next, note that 
        \begin{equation*}
            \mathcal{B}^c \cap \{\textup{each of }\sigma_x\eta_2,\sigma_{w_1}\eta_1, \sigma_{w_{0}}\eta_0 , \sigma_v \xi_0 \ge1 -O(\delta)\} \subset \{\sigma_x\neq \sigma_{w_1}\}
        \end{equation*}
        and so, since there is one flip on $\mathcal{B}_1''$, 
        \begin{align*}
            \mathcal{B}_1'' = \Big\{\textup{each of }&\sigma_x\eta_2,\sigma_{w_1}\eta_1, \sigma_{w_{0}}\eta_0 , \sigma_v \xi_0 \ge1 -O(\delta),\, \textup{the only flip is across }e=\{x, y_{1} \} \textup{ or }\{y_1,w_1\}\Big\}.
        \end{align*}
        On the one hand, if the flip is across the edge $e$, then $\xi_0,\eta_0,\eta_1$ are of the same sign and so $(1+\xi_j\eta_j)\ge 1$ for $j=0,1$. Hence
        \begin{align*}
            \widetilde{R}_1 \mathbf{1}_{\mathcal{B}''_1\cap\{\textup{flip across }e\}} &\le \frac{(O(\delta))^2}{(1+\xi_2\eta_2)^2} = \frac{O(\delta^2)}{\Omega(\delta^2)} = O(1).
        \end{align*} 
        On the other hand, if the flip is across $\{y_1,w_1\}$, then $\eta_0,\xi_0$ are the same sign and $\eta_1$ is the opposite sign and so $(1+\xi_0\eta_0)\ge 1$. Suppose (without loss of generality) that $\sigma_{w_1} = -1$ and the remaining spins $\sigma_x = \sigma_y = \sigma_u =\sigma_v = \sigma_{w_0}= +1$. On the event $\mathcal{B}''_{1}$, it follows that $\eta_{2},\eta_{0},\xi_{0}\ge 1-O(\delta)$ and $\eta_{1}\le -1+O(\delta)$. 
        Hence by Claim \ref{claim:quickBound1}, 
        \begin{align*}
        \xi_1 = \htheta_0 q(\xi_0,\eta_0) \ge (1-O(\delta))(1-O(\delta^2)) \ge 1-O(\delta).
        \end{align*}
         By Claim \ref{claim:strongopposites.} there is a constant $\kappa\in(0,1)$ such that
        \begin{equation*}\label{eqn:N=1.3}
            \xi_2 = \htheta_1 q(\eta_1,\xi_1) \in[-\kappa,\kappa].
        \end{equation*} 
        Hence, by Claim \ref{claim:infxy} and our generic lower bound, \begin{equation*}
            (1+\eta_2\xi_2)\ge 1-\kappa\qquad \textup{and}\qquad (1+\xi_1\eta_1) = \Omega(\delta).
        \end{equation*}
        Thus we get 
         \begin{align*}
            \widetilde{R}_1 \mathbf{1}_{\mathcal{B}''_1\cap\{\textup{flip across $\{ y_{1}, w_{1} \}$}\}} &\le \frac{O(\delta^2)}{\Omega(\delta^2) (1-\kappa)^4} = O(1). 
        \end{align*}
        Combining the above bounds, we deduce 
        \begin{equation*}
    \widetilde{R}_1\mathbf{1}_{\mathcal{B}''}  \le \widetilde{R}_1\mathbf{1}_{\mathcal{B}''_{0}}  +  \widetilde{R}_1 \mathbf{1}_{\mathcal{B}''_1\cap\{\textup{flip across }e\}} +  \widetilde{R}_1 \mathbf{1}_{\mathcal{B}''_1\cap\{\textup{flip across $\{ y_{1}, w_{1} \}$}\}}  = O(1).
        \end{equation*}
        Using \eqref{eqn:N=1.2}, the above bounds establish the second bound in \eqref{eqn:N=1} and finishes the case $N = 1$.

        \medskip\noindent\textbf{Case $N = 2$:} Similar to the bounds obtained in \eqref{eqn:N=0.1} and \eqref{eqn:N=1} we can use Lemma \ref{lem:swap} and Proposition \ref{prop:twoTerms} to get the generic upper bound 
        \begin{equation}\label{eqn:N=2}
            \left|\frac{\partial^2}{\partial \htheta_e \partial \htheta_f} \ell(\hparam; \sigma|_{L})\right| = O(\delta^{-3}).
        \end{equation} 
        More precisely, using $\sqrt{\htheta_3} \eta_3, \eta_2,\eta_1,\eta_0, \xi_0\in[-1+O(\delta), 1-O(\delta)]$ a.s. by Cor. \ref{cor:Robust} and the fact that $\sqrt{\htheta_3}= 1-\Theta(\delta)$ under Assumption \ref{assumption1}, we get
        \begin{align}
           \nonumber  \left|\frac{\partial^2}{\partial \htheta_e \partial \htheta_f} \ell(\hparam; \sigma|_{L})\right| &= \frac{1}{1-(\htheta_3^{1/2}\eta_3)^2} \frac{(1-(\htheta_3^{1/2}\eta_3)^2)(1-\eta_2^2)(1-\eta_1^2)(1-\eta_0^2)}{(1+\xi_3\eta_3)^2 (1+\xi_2\eta_2)^2 (1+\xi_1\eta_1)^2(1+\xi_0\eta_0)^2}\\
            \label{eqn:N=2.2}  &=O(\delta^{-1}) F(\htheta_3^{1/2},\eta_2,\htheta_3^{1/2}\eta_3,\xi_2)\cdot F(\htheta_1,\eta_0,\eta_1,\xi_0).
        \end{align} The bound \eqref{eqn:N=2} follows from Proposition \ref{prop:twoTerms} (1). 
        
       We will use similar notions of failures and flips as we introduced in Section \ref{sec:Aijk}. Here we need to modify them slightly to incorporate the first and the last nodes, $x$ and $v$. Namely, 
       the definitions of moderate failures and severe failures at vertices $w_2,w_1,w_0$ (at the beginning of Sec. \ref{sec:Aijk})
       remain the same. In addition, we introduce the similar failure events for the first and the last nodes $x$ and $v$: 
       \begin{enumerate}
           \item There is a moderate (resp. severe) failure of reconstruction at $y_{-1} = v$ if $\sigma_{v}\xi_0\in[-c_{\ref{eqn:antiReconstruction}}, 1-C_{\ref{eqn:etareconstruct}}\delta]$ (resp. $\sigma_v \xi_0 < -c_{\ref{eqn:antiReconstruction}}$). 
           \item There is a moderate (resp. severe) failure of reconstruction at $x$ if $\sigma_x\htheta_3^{1/2} \eta_3 \in[-c_{\ref{eqn:antiReconstruction}}, 1-C_{\ref{eqn:etareconstruct}}\delta]$ (resp. $\sigma_x \htheta_3^{1/2} \eta_3 < -c_{\ref{eqn:antiReconstruction}}$).
       \end{enumerate}       
       Note that $\eta_3 = Z_x$ so a moderate failure at $x$ has probability $O(\delta)$ by Thm. \ref{thm:Robust} and a severe failure at $x$ has probability $O(\delta^2)$. We will also say that there is a \textit{flip} if for two neighboring vertices $a,b\in \{x = y_3, y = y_2,y_1, u=y_0,v =  y_{-1}, w_0, w_1, w_2\}$ it holds $\sigma_a \neq \sigma_b$. Similar to \eqref{eqn:AijkDef}, define 
        \begin{equation*}\label{eqn:AijkDef2}
            \tilde{A}_{i,j,k} =\left\{\begin{array}{c}
            \textup{there are }i\textup{ many flips and }\\
            j\textup{ many moderate failures and }k\textup{ many severe failures}\\
            \textup{ at } y_{-1}, y_3, w_2,w_1,w_0
            \end{array}\right\}.
        \end{equation*}
        Similarly to \eqref{eqn:AijkBounds}, we can use Claim \ref{claim:unsigned_mag} and Thm. \ref{thm:Robust} to see that
        \begin{equation}\label{eqn:aijkbounds2}
            \PR_{\param^{*}}\left(\tilde{A}_{i,j,k}\right) = O(\delta^{i+j+2k}).
        \end{equation}
        By \eqref{eqn:N=2}, it suffices to show that on $\mathcal{C} = \bigcup_{i+j+2k\le 2} \tilde{A}_{i,j,k}$, 
        \begin{equation*}\label{eqn:N=2.1}
            \E_{\param^{*}}\left[ \left|\frac{\partial^2}{\partial \htheta_e \partial \htheta_f} \ell(\hparam; \sigma|_{L})\right| \mathbf{1}_{\mathcal{C}}\right] = O(1).
        \end{equation*}
       Below we show this by decomposing the event $\mathcal{C}$ into three cases. 

        \textbf{On the event $\bigcup_{j=0}^2 \tilde{A}_{0,j,0}$:} 
        This is the event that there are no flips and at most two moderate failures of reconstruction. Without loss of generality, we can suppose that $\sigma_{x} = 1$. Note that on this event we have a.s.
        \begin{equation*}
            \xi_0,\eta_j \ge -c_{\ref{eqn:antiReconstruction}}\textup{ for all }j = 0,1,2,3
        \end{equation*}
        and therefore by Claim \ref{claim:qRecurseClaim}
        \begin{equation*}
            \xi_1,\xi_2,\xi_3 \ge -\kappa
        \end{equation*} for some universal constant $\kappa\in (0,1)$.
        Hence $1+\xi_j\eta_j \ge 1-\kappa$ for each $j = 0,1,2,3$ by an application of Claim \ref{claim:infxy}. As $1-\eta_j^2\le 1$ we see using \eqref{eqn:tildeRdefiBoundforhess}
        \begin{equation*}
        \left|\frac{\partial^2}{\partial \htheta_e \partial \htheta_f} \ell(\hparam, \sigma|_{L})\right| \mathbf{1}_{\bigcup_{j=0}^2 \tilde{A}_{0,j,0}} \le \frac{1}{(1-\kappa)^8} = O(1).
        \end{equation*} 
        The expectation bound follows.

        \textbf{On the event $\bigcup_{i=1}^2 \tilde{A}_{i,0,0}$:} This is the event that there are one or two flips and there is no failure of reconstruction. 
        On this event, it holds that $|\htheta_3^{1/2}\eta_3|,|\eta_i|\ge 1-O(\delta)$ 
        for each $i=0,1,2.$ Then using \eqref{eqn:N=2.2} and Proposition \ref{prop:twoTerms}(2) with $\xi_0,\xi_2 \in[-1+\Theta(\delta),1-\Theta(\delta)]$ a.s.,  we see
        \begin{equation*}
            \left|\frac{\partial^2}{\partial \htheta_e \partial \htheta_f} \ell(\hparam, \sigma|_{L})\right| \mathbf{1}_{\bigcup_{i=1}^2 \tilde{A}_{i,0,0}} = O(\delta^{-1}) \mathbf{1}_{\bigcup_{i=1}^2 \tilde{A}_{i,0,0}}
        \end{equation*} 
        Using equation \eqref{eqn:aijkbounds2}, we see that
        \begin{equation*}\label{eqn:N=2.3}
            \E_{\param^{*}}\left[\left|\frac{\partial^2}{\partial \htheta_e \partial \htheta_f} \ell(\hparam, \sigma|_{L})\right| \mathbf{1}_{\cup_{i=1}^2 \tilde{A}_{i,0,0}} \right] = O(1).
        \end{equation*}

        \textbf{On the event $\tilde{A}_{0,0,1} \cup \tilde{A}_{1,1,0}$:} This is the event that there is either a single severe failure of reconstruction or a moderate failure paired with a flip. Note that this event has probability $O(\delta^2)$ by \eqref{eqn:aijkbounds2}. Moreover, as there is only one failure of reconstruction, we know that either 
        \begin{equation*}
            |\htheta_3^{1/2} \eta_3|, |\eta_2|\ge 1-O(\delta)\qquad\textup{ or }\qquad |\eta_1|,|\eta_0|\ge 1-O(\delta)
        \end{equation*} by the pigeonhole principle. 
        Hence, using both parts of Proposition \ref{prop:twoTerms} and \eqref{eqn:N=2.2},
        \begin{equation*}
            \left|\frac{\partial^2}{\partial \htheta_e \partial \htheta_f} \ell(\hparam, \sigma|_{L})\right| \mathbf{1}_{\tilde{A}_{0,0,1} \cup \tilde{A}_{1,1,0}} \le O(\delta^{-1}) O(1)O(\delta^{-1}) \mathbf{1}_{\tilde{A}_{0,0,1} \cup \tilde{A}_{1,1,0}} = O(\delta^{-1})\mathbf{1}_{\tilde{A}_{0,0,1} \cup \tilde{A}_{1,1,0}}
        \end{equation*}
        and so 
        \begin{equation*}            \E_{\param^{*}}\left[\left|\frac{\partial^2}{\partial \htheta_e \partial \htheta_f} \ell(\hparam, \sigma|_{L})\right| \mathbf{1}_{\tilde{A}_{0,0,1} \cup \tilde{A}_{1,1,0}} \right] \le O(\delta^{-2}) \PR_{\param^{*}}({\tilde{A}_{0,0,1} \cup \tilde{A}_{1,1,0}})  = O(1). 
        \end{equation*}
        This exhausts all of $\mathcal{C}$ and hence proves the result.
    \end{proof}

	\section*{Acknowledgements} 
	HL was partially supported by NSF grant DMS-2206296. DC and SR were partially supported by the Institute for Foundations of Data Science (IFDS) through NSF grant DMS-2023239 (TRIPODS Phase II). The paper is based upon work supported by the NSF under grant DMS-1929284 while one of the authors (SR) was in residence at the Institute for Computational and Experimental Research in Mathematics (ICERM) in Providence, RI, during the Theory, Methods, and Applications of Quantitative Phylogenomics semester program. SR was also supported by NSF grant DMS-2308495, as well as a Van Vleck Research Professor Award and a Vilas Distinguished Achievement Professorship.		

\providecommand{\bysame}{\leavevmode\hbox to3em{\hrulefill}\thinspace}
\providecommand{\etalchar}[1]{$^{#1}$}


\end{document}